\crefname{section}{Section}{Sections}
\crefname{subsection}{Subsection}{Subsections}
\Crefname{section}{Section}{Sections}
\Crefname{subsection}{Subsection}{Subsections}
\Crefname{figure}{Figure}{Figures}
\newtheorem{theorem}{Theorem}[section]
\newtheorem{lemma}{Lemma}[section]
\theoremstyle{definition}
\newtheorem{definition}{Definition}[section]
\newtheorem{remark}{Remark}[section]
\newcommand{\norm}[1]{\left\lVert#1\right\rVert}
\newcommand{\abs}[1]{\left\lvert#1\right\rvert}
\renewenvironment{thebibliography}[1]
     {\section*{\refname}%
      \@mkboth{\MakeUppercase\refname}{\MakeUppercase\refname}%
      \list{\@biblabel{\@arabic\c@enumiv}}%
           {\settowidth\labelwidth{\@biblabel{#1}}%
            \leftmargin2em
            \itemindent0em
			\itemsep0.3em
            \@openbib@code
            \usecounter{enumiv}%
            \let\p@enumiv\@empty
            \renewcommand\theenumiv{\@arabic\c@enumiv}}%
      \sloppy
      \clubpenalty4000
      \@clubpenalty \clubpenalty
      \widowpenalty4000%
      \sfcode`\.\@m}
     {\def\@noitemerr
       {\@latex@warning{Empty `thebibliography' environment}}%
      \endlist}
\let\@fnsymbol\@arabic
\title{Analysis of directional higher order jump discontinuities with trigonometric shearlets}
\date{}
\author{Kevin Schober\thanks{Corresponding author} \textsuperscript{,}\thanks{Institute of Mathematics, University of L\"ubeck, Ratzeburger Allee 160, D-23562 L\"ubeck, Germany.\newline
E-mail: \href{mailto:schober@math.uni-luebeck.de}{schober@math.uni-luebeck.de} (K. Schober), \href{mailto:prestin@math.uni-luebeck.de}{prestin@math.uni-luebeck.de} (J. Prestin)}
 \and J\"urgen Prestin\footnotemark[2]}
\begin{document}

\maketitle

\begin{abstract}  
In a recent article, we showed that trigonometric shearlets are able to detect directional step discontinuities along edges of periodic characteristic functions. In this paper, we extend these results to multivariate periodic functions which have jump discontinuities in higher order directional derivatives along edges. In order to prove suitable upper and lower bounds for the shearlet coefficients, we need to generalize the results about localization- and orientation-dependent decay properties of the corresponding inner products of trigonometric shearlets and the underlying periodic functions. 
\end{abstract}

\smallskip { \small {\textbf{Keywords.}} Detection of directional singularities, higher order directional derivatives, trigonometric shearlets, periodic wavelets}

\smallskip { \small {\textbf{Mathematics Subject Classification.}} 42C15, 42C40, 65T60}

\smallskip

\section{Introduction} 
\label{sec:introduction} 

The automatic recognition and separation of different image parts is of great importance in many industrial or life science applications. For this reason, one needs to precisely and effectively detect edges in images. One famous approach is the Canny algorithm \cite{canny:detection} which applies two-dimensional edge filters on a smoothed version of the image followed by a non-maximum suppression called hysteresis. It is well-known that the Canny algorithm is equivalent to the task of finding local maxima of a two-dimensional wavelet transform \cite{mallat:detection}. Typically, classical multivariate wavelets are obtained by taking the tensor product of one-dimensional scaling and wavelet functions. Since the support of these functions is aligned with the coordinate axes, they are not optimal for the detection and characterization of singularities in arbitrary directions as they can occur in dimensions higher than one \cite{mallat:buch}. Therefore, several multivariate directional systems have been considered in order to overcome this limitation, for example brushlets \cite{meyer:brushlets}, ridgelets \cite{candes:ridgelets}, curvelets \cite{candes:curvelets} or shearlets \cite{kutyniok:book}.

A widely used model for multivariate functions which contain singularities along edges is the class of so-called cartoon-like functions \cite{donoho:wedgelets,donoho:sparse}. These are functions of the form $\mathfrak{f}=f_0+f_1\,\chi_T$ where $T\subset \mathbb{R}^2$ and $f_0,f_1$ are smooth functions with compact support. This class was used for optimal sparse approximation with multivariate directional systems such as curvelets or shearlets \cite{candes:curvelets,labate:sparse} and later for the more general classes of parabolic molecules or $\alpha$-molecules \cite{grohs:parabolic_molecules,grohs:alpha, grohs:molecules}. Another application is the detection and characterization of directional discontinuities in cartoon-like functions. In a number of articles \cite{grohs:parabolic_molecules,labate:detection_continuous,kutyniok:edges_compactly} it was shown for different settings that continuous shearlets are well suited to deal with this task if the underlying cartoon-like function is piecewise constant.

To get a more realistic model of images with smooth transitions of different image parts, one needs to allow for the functions $f_1$ to be smooth with vanishing values on the boundary curve up to a directional derivative of higher order. In \cite{labate:smooth}, the authors showed that for the continuous shearlet coefficients the estimate
\begin{equation}\label{eq:main_result_cont}
	0<\lim\limits_{a\rightarrow 0^+}a^{-(n/2+3/4)}\abs{\bigl\langle \mathfrak{f},\psi_{a,s_0,\mathbf{p}} \bigr\rangle}<\infty
\end{equation} 
holds true if $\mathbf{p}\in\partial T$ and $s=s_0$ corresponds to the normal direction of $\partial T$ at $\mathbf{p}$ with $n$ denoting the number of vanishing derivatives of $f_1$ in that point. On the other hand, the shearlet coefficients exhibit rapid decay if $\mathbf{p}\notin\partial T$ or if $s = s_0$ does not correspond to the normal direction of $\partial T$ at $\mathbf{p}$.

In the case of discrete shearlets, the authors in \cite{labate:detection} proved the existence of suitable upper and lower bounds for the shearlet coefficients if the corresponding cartoon-like function is piecewise constant, e.g. $f_0=0$ and $f_1=1$. In \cite{schober:detection}, a similar result was shown for trigonometric shearlets and the detection of singularities of periodic characteristic functions. Until now, there is no analogous result to \cref{eq:main_result_cont} for the detection of jumps in higher order directional derivatives in the discrete setting. 

In this paper, we consider the trigonometric shearlets from \cite{schober:detection} which arise from the theory of multivariate periodic wavelets \cite{bergmann:dlVP,langemann:multi_periodic_wavelets,maksimenko:multi_periodic_wavelets} and extend the results to general cartoon-like functions having jumps in higher order directional derivatives on edge curves which hence do not need to be closed as in the case of characteristic functions in \cite{labate:detection, schober:detection}. We provide upper and lower estimates for the shearlet coefficients in the case that the corresponding smooth function $f_1$ vanishes on the boundary curve up to a directional derivative of higher order. 

The structure of the paper is as follows: We introduce trigonometric shearlets in \cref{sec:trigonometric_shearlets} and show a new upper bound for the partial derivates of these functions in polar coordinates. In \cref{sec:main_results}, we formulate the two main results of this paper given by \cref{thm:hauptresultat} und \cref{thm:hauptresultat2}. The next \cref{sec:proof_of_theorem_3_1} contains the proof of \cref{thm:hauptresultat} based on a decomposition of the underlying cartoon-like function on dyadic squares. \cref{sec:localization_lemmata} includes technical preparations for the proof of \cref{thm:hauptresultat2} in terms of localization lemmata and a new representation of the Fourier transform of polynomial cartoon-like functions in \cref{lem:fourier_transformation_gauss}. With these results in hand, we give proof of the lower bound for the shearlet coefficients in \cref{sec:proof_of_theorem_3_2}.

 \section{Trigonometric shearlets}
 \label{sec:trigonometric_shearlets}

We denote two-dimensional vectors by $\mathbf{x}=(x_1,x_2)^{\mathrm{T}}$ with the usual inner product $\mathbf{x}^{\mathrm{T}}\mathbf{y}\mathrel{\mathop:}= x_1\,y_1+x_2\,y_2$ and the induced Euclidean norm written as $\abs{\mathbf{x}}_2\mathrel{\mathop:}=\sqrt{\mathbf{x}^{\mathrm{T}}\mathbf{x}}$. Moreover, we write $\abs{\mathbf{x}}_1\mathrel{\mathop:}=\abs{x_1}+\abs{x_2}$, $\mathbf{x}^\mathbf{y}\mathrel{\mathop:}=x_1^{y_1}\,x_2^{y_2}$ and $\mathbf{x}^\beta\mathrel{\mathop:}=x_1^\beta\,x_2^\beta$ for $\beta\in \mathbb{R}$. For the representation of a vector $\boldsymbol{\xi}\in \mathbb{R}^2$ in polar coordinates, we write $\boldsymbol{\xi}=\rho\,\boldsymbol{\Theta}(\theta)$ with $\rho\mathrel{\mathop:}=\abs{\boldsymbol{\xi}}_2$ and $\boldsymbol{\Theta}(\theta)\mathrel{\mathop:}=(\cos\theta,\sin\theta)^{\mathrm{T}}$. For $\mathbf{k}\in \mathbb{N}_0^2$ and $n\in \mathbb{N}_0$ with $\abs{\mathbf{k}}_1\leq n$ we define $\mathbf{k}!\mathrel{\mathop:}=k_1!\,k_2!$ and $\binom{n}{\mathbf{k}}\mathrel{\mathop:}=\frac{n!}{\mathbf{k}!(n-\abs{\mathbf{k}}_1)!}$.  We denote by $C(\Omega)$ the space of all continuous functions on a domain $\Omega\subseteq \mathbb{R}^2$ with the norm $\norm{f}_{C(\Omega)}\mathrel{\mathop:}=\sup\limits_{\mathbf{x}\in A}\abs{f(\mathbf{x})}$. For $\mathbf{r}=(r_1,r_2)^{\mathrm{T}}\in \mathbb{N}_0^2$ and a sufficiently smooth function $f$ we use the notation $\partial^{\mathbf{r}}f\mathrel{\mathop:}= \frac{\partial^{r_1+r_2}}{\partial x_1^{r_1}\partial x_2^{r_2}}f$ and the space of all $q$-times continuously differentiable compactly supported functions will be denoted by
 \begin{equation*}
 	C^q_0(\Omega)\mathrel{\mathop:}=\left\lbrace f:\Omega\rightarrow \mathbb{R}:\partial^\mathbf{r}f\in C(\Omega)\;\text{for all}\;\mathbf{r}\in \mathbb{N}_0^2\; \text{with}\;\abs{\mathbf{r}}_1\leq q,\,\abs{\mathrm{supp}\,f}<\infty\right\rbrace
 \end{equation*}
 with the norm $\norm{f}_{C_0^q}\mathrel{\mathop:}=\norm{f}_{C_0^q(\Omega)}\mathrel{\mathop:}=\sup\limits_{\abs{\mathbf{r}}_1\leq q}\,\sup\limits_{\mathbf{x}\in \Omega}\abs{\partial^\mathbf{r}f(\mathbf{x})}$.\\

In this section, we define trigonometric shearlets which were already used in \cite{schober:detection}. For convenience, we briefly recap the construction and some properties of these functions. 
We call a nonnegative and even function $g:\mathbb{R}\rightarrow \mathbb{R}$ admissible if $\mathrm{supp}\,g=\left(-\frac{2}{3},\frac{2}{3}\right)$ and $g$ is monotonically decreasing for $x\in \left( \frac{1}{3},\frac{2}{3} \right)$ and satisfies the property $\sum\limits_{z\in \mathbb{Z}}g(x+z)=1$ for all $x\in \mathbb{R}$.\\
 An admissible function $g$ can be chosen arbitrarily smooth \cite{schober:detection}. We introduce functions $\widetilde{g}:\mathbb{R}\rightarrow \mathbb{R}$ which are given by $\widetilde{g}(x)\mathrel{\mathop:}=g\left( \frac{x}{2} \right)-g(x)$.\\

For $\mathfrak{i}\in\lbrace \mathfrak{h},\mathfrak{v}\rbrace$ we consider bivariate functions $\Psi^{(\mathfrak{i})}:\mathbb{R}^2\rightarrow \mathbb{R}$ defined by
 	\begin{equation}\label{eq:window_function}
 		\Psi^{(\mathfrak{h})}(\mathbf{x})\mathrel{\mathop:}=\widetilde{g}(x_1)\,g(x_2),\qquad\qquad \Psi^{(\mathfrak{v})}(\mathbf{x})\mathrel{\mathop:}=g(x_1)\,\widetilde{g}(x_2).
 	\end{equation}	
	We call them window functions and write $\Psi^{(\mathfrak{i})}\in \mathcal{W}$. We remark that for an admissible function $g\in C^q_0(\mathbb{R})$ we have $\Psi^{(\mathfrak{i})}\in C^q_0(\mathbb{R}^2)$ and denote $\Psi^{(\mathfrak{i})}\in \mathcal{W}^q$. For even $j\in \mathbb{N}_0$ and $\ell\in \mathbb{Z}$ with $\abs{\ell}\leq 2^{j/2}$ we consider the matrices
 		\begin{equation*}
 			\mathbf{N}_{j,\ell}^{(\mathfrak{h})}\mathrel{\mathop:}=\begin{pmatrix}
 				2^j & \ell\, 2^{j/2}\\
 				0 & 2^{j/2}
 			\end{pmatrix},\qquad\qquad\;\mathbf{N}_{j,\ell}^{(\mathfrak{v})}\mathrel{\mathop:}=\begin{pmatrix}
 				2^{j/2} & 0\\
 				\ell\, 2^{j/2} & 2^j
 			\end{pmatrix}
 		\end{equation*}
 		and introduce the functions
 	\begin{equation*}
\Psi^{(\mathfrak{i})}_{j,\ell}(\mathbf{x})\mathrel{\mathop:}=\Psi^{(\mathfrak{i})}\left(\left(\mathbf{N}_{j,\ell}^{(\mathfrak{i})}\right)^{-\mathrm{T}}\mathbf{x}\right),\qquad \mathbf{x}\in \mathbb{R}^2.
 	\end{equation*}  
 For $\mathfrak{i}\in\lbrace \mathfrak{i},\mathfrak{v}\rbrace$, $\Psi^{(\mathfrak{i})}\in \mathcal{W}^q$ and $\mathbf{y}\in \mathcal{P}(\mathbf{N}_{j,\ell}^{(\mathfrak{i})})$ we define trigonometric shearlets by
 	\begin{equation}\label{eq:trigonometric_shearlets}
 		\psi_{j,\ell,\mathbf{y}}^{(\mathfrak{i})}(\mathbf{x})\mathrel{\mathop:}=2^{-3j/4}\sum_{\mathbf{k}\in \mathbb{Z}^2}\Psi^{(\mathfrak{i})}_{j,\ell}(\mathbf{k})\,\mathrm{e}^{\mathrm{i}\mathbf{k}^{\mathrm{T}}(\mathbf{x}-2\pi\mathbf{y})},
 	\end{equation}
	where 
 	\begin{align*}
 		&\mathcal{P}\left(\mathbf{N}_{j,\ell}^{(\mathfrak{h})}\right)=\Bigl\lbrace 2^{-j}\,z_1\,:\,z_1=-2^{j-1},\dots,2^{j-1}-1 \Bigr\rbrace\times\Bigl\lbrace 2^{-j/2}\,z_2\,:\,z_2=-2^{j/2-1},\dots,2^{j/2-1}-1\Bigr\rbrace,\\
 		&\mathcal{P}\left(\mathbf{N}_{j,\ell}^{(\mathfrak{v})}\right)=\Bigl\lbrace 2^{-j/2}\,z_1\,:\,z_1=-2^{j/2-1},\dots,2^{j/2-1}-1 \Bigr\rbrace\times\Bigl\lbrace 2^{-j}\,z_2\,:\,z_2=-2^{j-1},\dots,2^{j-1}-1 \Bigr\rbrace.
 	\end{align*}
	
Let $f,g:\mathbb{R}\rightarrow \mathbb{R}$ be sufficiently smooth functions. The $n$-th order derivative of the composition of $f$ and $g$ is given by Fa\`{a} di Bruno's formula \cite[Section 4.3]{porteous:faadibruno}
	    \begin{align}
	    	\frac{\mathrm{d}^n}{\mathrm{d}x^n}f(g(x))&=\sum\limits_{\mathbf{k}}\binom{n}{\mathbf{k}}\,f^{(\abs{\mathbf{k}}_1)}(g(x))\,\prod\limits_{j=1}^n \left( \frac{g^{(j)}(x)}{j!} \right)^{k_j}\notag\\\label{eq:faa_di_bruno2}
	    	&=\sum\limits_{k=1}^{n}f^{(k)}(g(x))\,B_{n,k}\Bigl(g'(x),g''(x),\hdots,g^{(n-k+1)}(x)\Bigr),
	    \end{align}
	    where the sum in the first line runs over all vectors $\mathbf{k}=(k_1,\hdots,k_n)^{\mathrm{T}}\in \mathbb{N}_0^n$ with $\sum\limits_{i=1}^n i\cdot k_i=n$ and $B_{n,k}$ are the well known Bell polynomials. It is also known that
	    \begin{equation}\label{eq:bell_zahl}
	    	\sum_{k=0}^{n}B_{n,k}(1,1,\hdots,1)=\sum\limits_{k=0}^{n}\sum\limits_{\mathbf{m}}\binom{n}{\mathbf{m}}\prod_{j=1}^{n-k+1}(j!)^{-m_j}=B_n,
	    \end{equation}
	    where $B_n$ is the $n$-th Bell number and the inner sum is running over all $\mathbf{m}=(m_1,\hdots,m_{n-k+1})^{\mathrm{T}}\in \mathbb{N}_0^{n-k+1}$ fulfilling $\sum\limits_{i=1}^{n-k+1}m_i=k$ and $\sum\limits_{i=1}^n i\cdot m_i=n$. In the following lemma we need the angles
		\begin{equation}\label{eq:theta_jl}
			\theta_{j,\ell}^{(\mathrm{h})}\mathrel{\mathop:}=\arctan\left(\ell\,2^{-j/2}\right),\qquad\qquad \theta_{j,\ell}^{(\mathrm{v})}\mathrel{\mathop:}=\mathrm{arccot}\left(\ell\,2^{-j/2}\right).
		\end{equation}
 \begin{lemma}\label{lem:partielle_ableitung_psi:polar}
 	For $\mathfrak{i}\in\lbrace \mathfrak{h},\mathfrak{v}\rbrace$ and $q\in \mathbb{N}_0$ let $\Psi^{(\mathfrak{i})}\in \mathcal{W}^q$ from \cref{eq:window_function} be given. Then for all $r\leq q$ we have
 	\begin{equation*}
 		\abs{\frac{\partial^r}{\partial\rho^r}\left[\Psi_{j,\ell}^{(\mathfrak{i})}\left( 2^j\rho\,\boldsymbol{\Theta}(\theta) \right)\right]}\leq C_1(r),\qquad\abs{\frac{\partial^r}{\partial\theta^r}\left[\Psi_{j,\ell}^{(\mathfrak{i})}\left( 2^j\rho\,\boldsymbol{\Theta}(\theta) \right)\right]}\leq C_2(r)\,2^{jr/2}.
 	\end{equation*}
 \end{lemma}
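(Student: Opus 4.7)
My plan is to begin by writing $\Psi^{(\mathfrak{h})}_{j,\ell}(\mathbf{x})$ explicitly using the inverse of $\mathbf{N}^{(\mathfrak{h})}_{j,\ell}$, substituting $\mathbf{x}=2^j\rho\,\boldsymbol{\Theta}(\theta)$, and simplifying via the identity $\sin\theta-\tan\alpha\cos\theta = \sin(\theta-\alpha)/\cos\alpha$ with $\alpha := \theta^{(\mathrm{h})}_{j,\ell}$, which yields
\[
\Psi^{(\mathfrak{h})}_{j,\ell}\!\bigl(2^j\rho\,\boldsymbol{\Theta}(\theta)\bigr)
= \widetilde{g}(\rho\cos\theta)\,g\!\left(\frac{2^{j/2}\rho\sin(\theta-\alpha)}{\cos\alpha}\right).
\]
The case $\mathfrak{i}=\mathfrak{v}$ is identical after interchanging the roles of $\cos$ and $\sin$, so I treat only $\mathfrak{i}=\mathfrak{h}$. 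I then read off the joint support: $\widetilde{g}(\rho\cos\theta)\ne 0$ forces $\rho|\cos\theta|\in(1/3,4/3)$, and the second factor forces $|\sin(\theta-\alpha)|\le 2\cos\alpha/(3\cdot 2^{j/2}\rho)$. Since $|\ell|\le 2^{j/2}$ gives $|\alpha|\le\pi/4$ and hence $\cos\alpha\ge 1/\sqrt 2$, a short calculation parametrising the support parallelogram shows that $\rho$ is bounded above by an absolute constant and $\cos\theta$ is bounded away from zero on the support.

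For the $\rho$-derivative, I expand by the Leibniz rule:
\[
\frac{\partial^r}{\partial\rho^r}\Bigl[\widetilde{g}(\rho\cos\theta)\,g(\rho\,h)\Bigr]
=\sum_{k=0}^{r}\binom{r}{k}(\cos\theta)^{k}\,\widetilde{g}^{(k)}(\rho\cos\theta)\, h^{r-k}\,g^{(r-k)}(\rho\, h),
\]
where $h:=2^{j/2}\sin(\theta-\alpha)/\cos\alpha$ is the only factor that can be large. But wherever $g^{(r-k)}(\rho\, h)\neq 0$ we have $|\rho\, h|<2/3$, and wherever $\widetilde{g}^{(k)}(\rho\cos\theta)\neq 0$ we have $\rho>1/3$; combining these gives $|h|^{r-k}\le 2^{r-k}$, so each term — and hence the sum — is controlled by a constant $C_1(r)$ independent of $j,\ell,\rho,\theta$.

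The $\theta$-derivative is the genuinely interesting part. I apply Leibniz again and then Faà di Bruno (\cref{eq:faa_di_bruno2}) to each factor. For $\widetilde{g}(\rho\cos\theta)$, the derivatives of $\rho\cos\theta$ in $\theta$ are bounded by $\rho$, which is an absolute constant on the support, so the homogeneity of the Bell polynomials yields a bound independent of $j$. For $g(h(\theta))$ with $h(\theta)=2^{j/2}\rho\sin(\theta-\alpha)/\cos\alpha$ at fixed $\rho$, every derivative satisfies $|h^{(m)}(\theta)|\le \sqrt{2}\cdot 2^{j/2}\rho$, so by the homogeneity of $B_{r,k}$ one has $|B_{r,k}(h',\dots,h^{(r-k+1)})|\le (C\,2^{j/2})^{k}\,B_{r,k}(1,\dots,1)$. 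Summing in $k$, the dominant contribution comes from $k=r$ and produces the factor $2^{jr/2}$, while the bound \cref{eq:bell_zahl} lets me collect all Bell polynomial constants into a single $C_2(r)$. Combining the two Leibniz factors, the worst case is again $2^{jr/2}$, which is the claimed bound.

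The main obstacle is the support analysis in the first step: verifying that $\rho$ is bounded (and $\cos\theta$ bounded away from zero) on the joint support, because without this the $\theta$-derivatives of $\widetilde{g}(\rho\cos\theta)$ would pick up unbounded factors of $\rho$ that would spoil the clean separation of scales. Once $\rho$ is controlled, the remaining work is essentially bookkeeping with Leibniz, Faà di Bruno, and the scaling of Bell polynomials.
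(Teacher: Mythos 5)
Your proposal is correct and takes essentially the same route as the paper: the same factorization $\widetilde{g}(\rho\cos\theta)\,g\bigl(\rho\cos\theta(2^{j/2}\tan\theta-\ell)\bigr)$ (your $\sin(\theta-\alpha)/\cos\alpha$ form is the same function), the same support bounds (which the paper imports from \cite{schober:detection} and you re-derive directly), and the same Leibniz/Fa\`a di Bruno/Bell-polynomial bookkeeping in which only the $\theta$-derivatives of the sheared factor produce the $2^{jr/2}$ growth. The only cosmetic difference is that your rotated form bounds all $\theta$-derivatives of the inner argument uniformly by $C\,2^{j/2}\rho$, whereas the paper treats even and odd derivatives separately; the resulting estimates coincide.
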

 \begin{proof}
 	We only show the case $\mathfrak{i}=\mathfrak{h}$. We use polar coordinates and obtain
 	\begin{equation*}
 		\Psi_{j,\ell}^{(\mathfrak{h})}\left( 2^j\rho\,\boldsymbol{\Theta}(\theta) \right)=\widetilde{g}_{\alpha}\bigl(\rho\cos\theta\bigr)\,g_{\alpha}\Bigl( \rho\cos\theta\left(2^{j/2}\tan\theta-\ell\right) \Bigr)
 	\end{equation*}
 	and use the chain rule to get
 	\begin{equation}\label{eq:partielle_ableitung_psi:polar1}
		\abs{\frac{\partial^s}{\partial\rho^s}\left[\widetilde{g}_{\alpha}\bigl(\rho\cos\theta\bigr)\right]}=\abs{\cos\theta}^s\,\abs{\widetilde{g}_{\alpha}^{(s)}\bigl(\rho\cos\theta\bigr)}\leq \norm{\widetilde{g}_{\alpha}}_{C^s(\mathbb{R})}=C_1(s)
 	\end{equation}
	for all $s\leq r$. From \cite[Lemma 1]{schober:detection} it follows that
 	  \begin{equation*}
 	  	\mathrm{supp}\,\Psi_{j,\ell}^{(\mathfrak{h})}\left( 2^j\rho\,\boldsymbol{\Theta}(\theta) \right)\subset\left\lbrace(\rho,\theta)\in \mathbb{R}\times\left[-\frac{\pi}{2},\frac{\pi}{2}\right]:\frac{1}{3}<\abs{\rho}< 2,\,\theta_{j,\ell-2}^{(\mathfrak{h})}<\theta<\theta_{j,\ell+2}^{(\mathfrak{h})}\right\rbrace,
 	  \end{equation*} 
 	 leading to the estimate
 	 \begin{align}
 	 	\abs{\frac{\partial^s}{\partial\rho^s}\left[g_{\alpha}\Bigl( \rho\cos\theta\left(2^{j/2}\tan\theta-\ell\right) \Bigr)\right]}&=\label{eq:partielle_ableitung_psi:polar2} \abs{\cos\theta}^s\Bigl\lvert2^{j/2}\tan\theta-\ell \Bigr\rvert^s \abs{g_{\alpha}^{(s)}\Bigl( \rho\cos\theta\left(2^{j/2}\tan\theta-\ell\right) \Bigr)}\notag\\
 		&\leq 2^s\,\norm{g_{\alpha}}_{C^s(\mathbb{R})}\leq C_2.
 	 \end{align}
 	 Using Leibniz rule and triangle inequality we get with \cref{eq:partielle_ableitung_psi:polar1} and \cref{eq:partielle_ableitung_psi:polar2}
 	 \begin{equation*}
 	 	\abs{\frac{\partial^r}{\partial\rho^r}\left[\Psi_{j,\ell}^{(\mathfrak{h})}\left( 2^j\rho\,\boldsymbol{\Theta}(\theta) \right)\right]}\leq\sum\limits_{s=0}^{r}\binom{r}{s}\,2^{r-s}\,C_1(s)\,C_2(r-s)\leq 3^r\,C_3(r)=C_4(r).
 	 \end{equation*}
 	 For the variable $\theta$ we again use the chain rule for $s\leq r$ to obtain
 	 \begin{equation*}
 	 	\abs{\frac{\partial^s}{\partial\theta^s}\left[ \rho\cos\theta\right]}\leq\abs{\rho},
 	 \end{equation*}
 	 which leads with \cref{eq:bell_zahl} and the Fa\`{a} di Bruno formula from \cref{eq:faa_di_bruno2} to
 	 \begin{align}
 	 	\abs{\frac{\partial^s}{\partial\theta^s}\left[ \widetilde{g}_{\alpha}\bigl(\rho\cos\theta\bigr)\right]}&\leq\sum\limits_{t=1}^{s}\abs{\widetilde{g}_{\alpha}^{(t)}\bigl(\rho\cos\theta\bigr)}\,B_{s,t}\left(\abs{\frac{\partial}{\partial\theta}\left[ \rho\cos\theta\right]},\hdots,\abs{\frac{\partial^{s-t+1}}{\partial\theta^{s-t+1}}\left[ \rho\cos\theta\right]}\right)\notag\\
		&\leq\norm{\widetilde{g}_{\alpha}}_{C^s(\mathbb{R})}\sum\limits_{t=1}^{s}\sum\limits_{\mathbf{m}}\binom{s}{\mathbf{m}}\prod_{j=1}^{s-t+1}\left(\frac{\abs{\frac{\partial^j}{\partial\theta^j}\left[ \rho\cos\theta\right]}}{j!}\right)^{m_j}\notag\\
 		&\leq C_5(s)\sum\limits_{t=1}^{s}\abs{\rho}^{t}\sum\limits_{\mathbf{m}}\binom{s}{\mathbf{m}}\,\prod\limits_{j=1}^{s-t+1}(j!)^{-m_j}\leq C_6(s)\,B_s=C_7(s).
\label{eq:partielle_ableitung_psi:polar3}
 		 \end{align} 
 		 For even $s\in \mathbb{N}$ we have
 		 \begin{equation*}
 		 	\abs{\frac{\partial^s}{\partial\theta^s}\left[\rho\cos\theta\left(2^{j/2}\tan\theta-\ell\right)\right]}= \abs{\rho}\abs{\cos\theta}\Bigl\lvert2^{j/2}\tan\theta-\ell \Bigr\rvert \leq 4
 		 \end{equation*}
		 since $\rho\leq 2$ and for odd $s\in \mathbb{N}$ we see
 		 \begin{equation*}
 		 	\abs{\frac{\partial^s}{\partial\theta^s}\left[\rho\cos\theta\left(2^{j/2}\tan\theta-\ell\right)\right]}= \abs{\rho}\abs{\cos\theta}\Bigl\lvert2^{j/2}+\ell\,\tan\theta \Bigr\rvert \leq C_8\,\abs{\rho}\,2^{j/2}
 		 \end{equation*}
 		 since $\abs{\ell}<2^{j/2}$. Using the Fa\`{a} di Bruno formula \cref{eq:faa_di_bruno2} we obtain the estimate
 		 \begin{align}
 		 	&\abs{\frac{\partial^s}{\partial\theta^s}\left[g_{\alpha}\Bigl( \rho\cos\theta\left(2^{j/2}\tan\theta-\ell\right) \Bigr)\right]}\notag\\ 
			&\quad\leq\norm{g_{\alpha}}_{C^s(\mathbb{R})}\sum\limits_{t=1}^{s}\sum\limits_{\mathbf{m}}\binom{s}{\mathbf{m}}\prod_{j=1}^{s-t+1}\left(\frac{\abs{\frac{\partial^j}{\partial\theta^j}\left[ \rho\cos\theta\left(2^{k}\tan\theta-\ell\right)\right]}}{j!}\right)^{m_j}\notag\\
 			&\quad\leq C_9(s)\sum\limits_{t=1}^{s}\abs{\rho}^{t}\,2^{jt/2}\sum\limits_{\mathbf{m}}\binom{s}{\mathbf{m}}\,\prod\limits_{j=1}^{s-t+1}(j!)^{-m_j}\leq C_{10}(s)\,2^{js/2}\,B_s=C_{11}(s)\,2^{js/2}.
\label{eq:partielle_ableitung_psi:polar4}
 		 \end{align}
 		 With the estimates \cref{eq:partielle_ableitung_psi:polar3} and \cref{eq:partielle_ableitung_psi:polar4} we finally conclude
 		 \begin{align*}
 		 	\abs{\frac{\partial^r}{\partial\theta^r}\left[\Psi_{j,\ell}^{(\mathfrak{h})}\left( 2^j\rho\,\boldsymbol{\Theta}(\theta) \right)\right]}&\leq\sum\limits_{s=0}^{r}\binom{r}{s}\,\abs{\frac{\partial^s}{\partial\theta^s}\left[\widetilde{g}_{\alpha}\bigl(\rho\cos\theta\bigr)\right]}\\
 			&\qquad\times\abs{\frac{\partial^{r-s}}{\partial\theta^{r-s}}\left[g_{\alpha}\Bigl( \rho\cos\theta\left(2^{k}\tan\theta-\ell\right) \Bigr)\right]}\\
 			&\leq\sum\limits_{s=0}^{r}\binom{r}{s}\,C_7(s)\,B_s\,C_{11}(r-s)\,2^{j(r-s)/2}\,B_{r-s}\\
 			&\leq C_{12}(r)\,B_r^2\,2^{jr/2}=C_{13}(r)\,2^{jr/2}.
 		 \end{align*}
 \end{proof}
 


 \section{Main results}
 \label{sec:main_results}
 
 For the main results of this paper, we need the class of so called cartoon-like functions \cite{candes:curvelets,donoho:wedgelets,labate:sparse}. These are functions which are smooth except for discontinuities along edges. We call a set $T\subset \left( -\pi,\pi \right)^2$ star-shaped and write $T\in \mathrm{STAR}$ if there exists $\mathbf{x}_0\in T$, called origin, such that for every $\mathbf{x}\in T$ we have
\begin{equation*}
	\bigl\lbrace \lambda \mathbf{x}+(1-\lambda)\mathbf{x}_0\,:\,\lambda\in[0,1] \bigr\rbrace\subset T.
\end{equation*}
We follow the ideas of \cite[Section 8.2]{donoho:wedgelets} and consider star-shaped sets with smooth boundaries $\partial T$ given by a parametrized curve in polar coordinates. Let $r\in C^2 \left([0,2\pi)\right)$ be a radius function with $\norm{r}_{C^2}\leq\tau$ and $T\in \mathrm{STAR}$ a star-shaped set with origin $\mathbf{x}_0$ which boundary $\partial T$ can be expressed in polar coordinates by a parametrized curve $\boldsymbol{\gamma}:[0,2\pi)\rightarrow \partial T$ of the form
 	\begin{equation}
 		\label{eq:star1}
 		\boldsymbol{\gamma}(x)=\mathbf{x}_0+r(x)\,(\cos x,\,\sin x)^{\mathrm{T}},\qquad x\in[0,2\pi).
 		\end{equation}
 		The set $\mathrm{STAR}^2(\tau)$ is defined as the set which contains all $T\in \mathrm{STAR}$ with a boundary described as in \cref{eq:star1}. 
 \begin{definition}\label{def:cartoon_funktionen}
 	For $T\in \mathrm{STAR}^2(\tau)$ and $u\in \mathbb{N}_0$ the set of cartoon-like functions is defined by
 	\begin{equation*}
 		\mathcal{E}^u(\tau)\mathrel{\mathop:}=\Bigl\lbrace \mathfrak{f}=f_0+f_1\chi_T\,:\,f_0,f_1\in C_0^u(\mathbb{R}^2)\;\,\text{and}\;\, \mathrm{supp}\,f_0\subset(-\pi,\pi)^2\Bigr\rbrace.
 	\end{equation*}
 \end{definition}
 The directional derivative of a continuously differentiable function $f:\Omega\rightarrow \mathbb{R}$ in the direction $\mathbf{v}\in \mathbb{R}^2$ with $\abs{\mathbf{v}}_2=1$ in $\mathbf{x}\in \Omega$ is given by
  \begin{equation*}
  	\partial_\mathbf{v}f(\mathbf{x})\mathrel{\mathop:}=\partial_\mathbf{v}[f](\mathbf{x})=\mathbf{v}^{\mathrm{T}}\,\mathrm{grad}\,f(\mathbf{x}).
  \end{equation*}
  For $f\in C^q(\Omega)$ and $0\leq m\leq q$ there exist the directional derivatives of $m$-th order in every direction $\mathbf{v}\in \mathbb{R}^2$ with $\abs{\mathbf{v}}_2=1$. They are given by $\partial^{0}_{\mathbf{v}}f(\mathbf{x})=f(\mathbf{x})$ and
  \begin{equation}\label{eq:m_te_richtungsableitung_allgemein}
  	\partial^{m}_{\mathbf{v}}f(\mathbf{x})\mathrel{\mathop:}=\partial^{m}_{\mathbf{v}}[f](\mathbf{x})=\partial \mathbf{v}\left[ \partial^{m-1}_{\mathbf{v}}f\right](\mathbf{x})=\sum_{\abs{\mathbf{r}}_1=m}\binom{m}{\mathbf{r}}\,\mathbf{v}^{\mathbf{r}}\partial^{\mathbf{r}}f(\mathbf{x}),\qquad 1\leq m\leq q,
  \end{equation}
  where the last equality can be shown by induction. An important tool for the analysis of cartoon-like functions is the decomposition on dyadic squares \cite{candes:curvelets,labate:sparse,schober:detection}. For $j\in \mathbb{N}_0$ let $\mathcal{Q}_j$ be the set of all dyadic squares $Q\subseteq[-\pi,\pi)^2$ with
  \begin{equation*}
  	Q=\left[2\pi k_1\,2^{-j/2}-\pi,2\pi (k_1+1)\,2^{-j/2}-\pi\right)\times\left[2\pi k_2\,2^{-j/2}-\pi,2\pi (k_2+1)\,2^{-j/2}-\pi\right)
  \end{equation*} 
  for $k_1,k_2=0,\dots ,2^{j/2}-1$. For smooth functions $\phi\in C_0^\infty\left(\mathbb{R}^2\right)$ with $\mathrm{supp}\,\phi\subset(-\pi,\pi)^2$ and $Q\in \mathcal{Q}_j$ we define
  \begin{equation}\label{eq:phi_Q}
   \phi_Q(\mathbf{x})\mathrel{\mathop:}=\phi\left(2^{j/2}(x_1+\pi)-\pi(2k_1-1),2^{j/2}(x_2+\pi)-\pi(2k_2-1)\right)
  \end{equation} 
  and assume that $\phi$ defines a smooth partition of unity
  \begin{equation}\label{eq:zerl_der_eins}
  	\sum_{Q\in \mathcal{Q}_j}\phi_Q(\mathbf{x})=1,\qquad \mathbf{x}\in [-\pi,\pi)^2.
  \end{equation}
 Let $T\in \mathrm{STAR}^2(\tau)$ be given. We say $Q\in \mathcal{Q}_j^1\subset \mathcal{Q}_j$ if $\partial T\cap Q\neq\emptyset$ and for the non-intersecting squares we write $\mathcal{Q}_j^0\mathrel{\mathop:}=\mathcal{Q}_j\setminus \mathcal{Q}_j^1$.\\
  
 	For Lebesgue measurable sets $A\subseteq \mathbb{R}^2$ and functions $f:A\rightarrow \mathbb{R}$ we define
  \begin{equation*}
  	\norm{f}_{A,p}\mathrel{\mathop:}=\left( \int_{A}\abs{f(\mathbf{x})}^p\,\mathrm{d}\mathbf{x} \right)^{1/p},\qquad 1\leq p<\infty,
  \end{equation*} 
 and denote the collection of functions satisfying $\norm{f}_{A,p}<\infty$ by $L_p(A)$. For two-dimensional $2\pi$-periodic functions $f:\mathbb{T}^2\rightarrow \mathbb{R}$ given on the torus $\mathbb{T}^2\mathrel{\mathop:}=\mathbb{R}^2\setminus2\pi\,\mathbb{Z}^2$ the usual inner product of the Hilbert space $L_2(\mathbb{T}^2)$ is given by
 \begin{equation*}
 	\langle f,g \rangle_2 \mathrel{\mathop:}=\frac{1}{2\pi}\int_{\mathbb{T}^2}f(\mathbf{x})\overline{g(\mathbf{x})}\,\mathrm{d}\mathbf{x},\qquad\qquad f,g\in L_2(\mathbb{T}^2).
 \end{equation*}
 \begin{figure}[t]\hspace{-0.2cm}
 	\subfloat{
 	{\includegraphics[width=.47\textwidth]{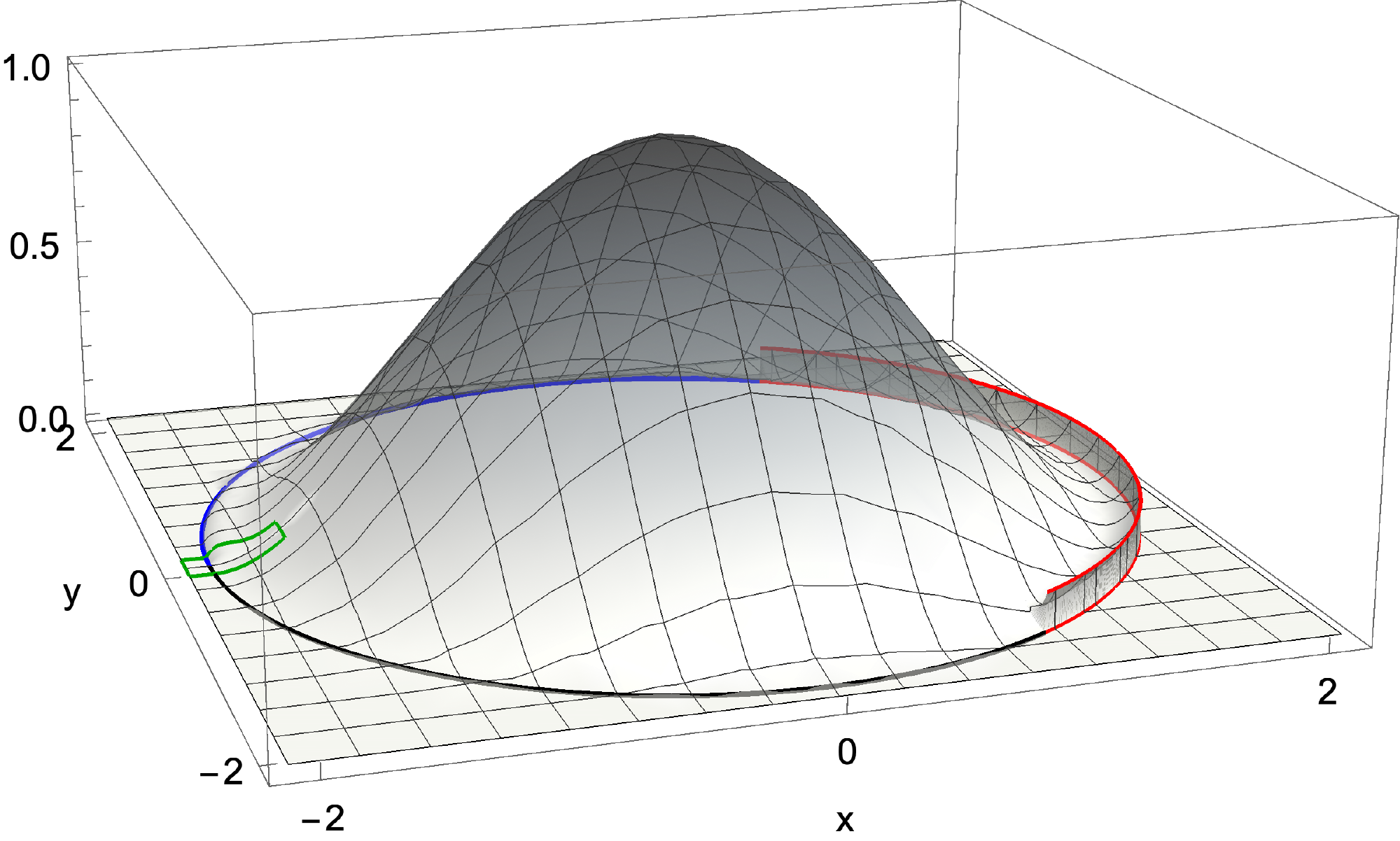}}
 	}\hspace{0.3cm}
 	\subfloat{
 	{\includegraphics[width=.47\textwidth]{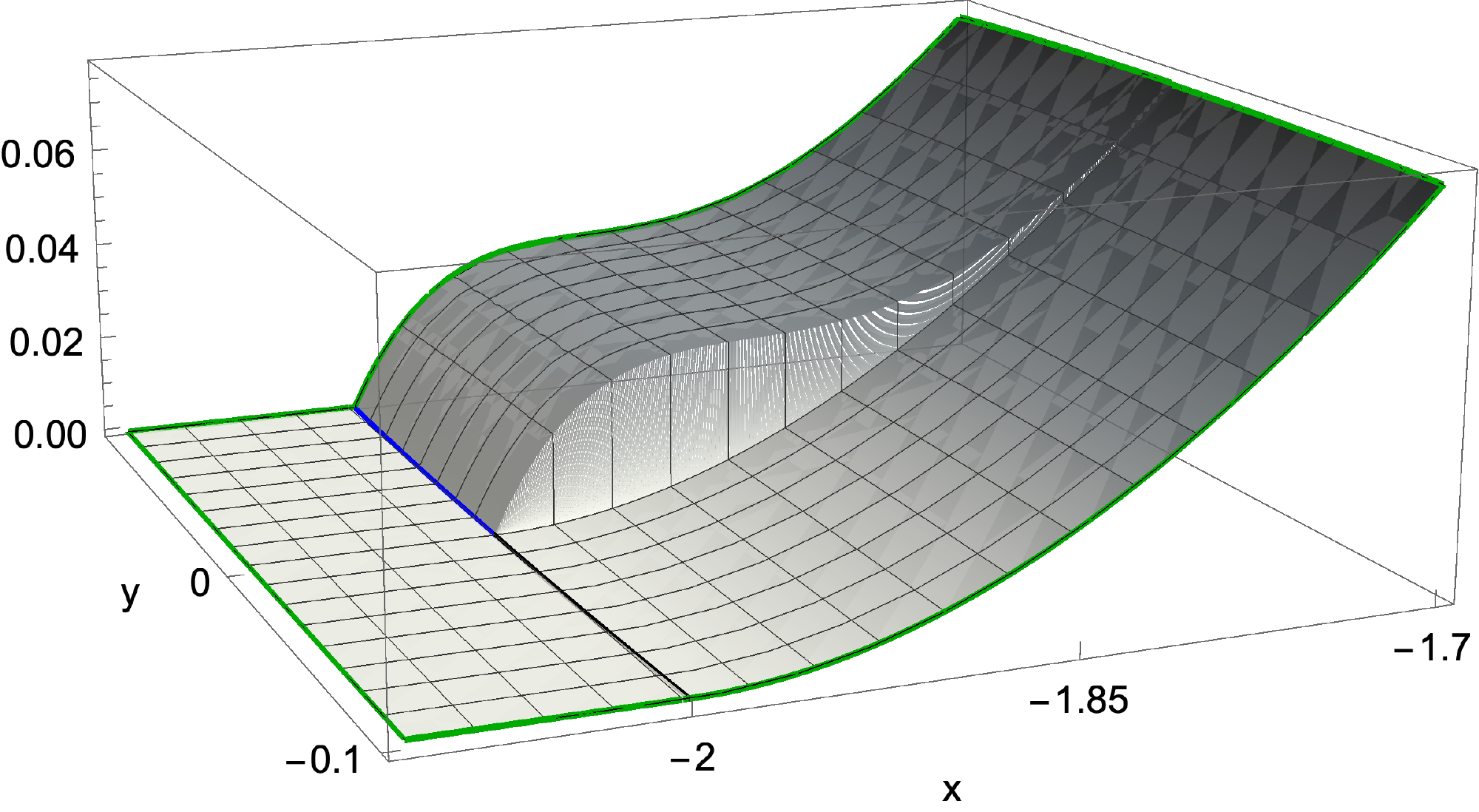}}
 	} 
   	\caption[]{Left: Cartoon-like function with jump discontinuities in the zeroth (red), first (blue) and second (black) order directional derivative on a circle with radius $2$. Right: Zoom into the green window in the left picture.}\label{fig:numerics1}
     \end{figure}
   
 For all $\mathbf{x}\in\partial T$ let $\mathbf{n}(\mathbf{x})=(\cos(\vartheta_\mathbf{x}),\sin(\vartheta_\mathbf{x}))^{\mathrm{T}},\,\vartheta_\mathbf{x}\in[0,2\pi),$ be the outer normal direction of $\partial T$ in $\mathbf{x}$. For the two main theorems we need cartoon-like functions $\mathfrak{f}\in \mathcal{E}^{u+1}(\tau)$ from \cref{def:cartoon_funktionen} with $u>4$ and their $2\pi$-periodization $\mathfrak{f}^{2\pi}$. For a window function $\Psi^{(\mathfrak{i})}\in \mathcal{W}^{2q}$ with $2q\geq u$ and $\mathfrak{i}\in \left\lbrace \mathfrak{h},\mathfrak{v} \right\rbrace$ from \cref{eq:window_function} let $\psi_{j,\ell,\mathbf{y}}^{(\mathfrak{i})}$ be a trigonometric shearlet from \cref{eq:trigonometric_shearlets}.
 
 \begin{theorem}
 \label{thm:hauptresultat} 
 Let $j\in \mathbb{N}$ be sufficiently large and even and $\ell\in \mathbb{Z}$ with $\abs{\ell}<2^{j/2}$ and $\mathbf{y}\in \mathcal{P}(\mathbf{N}_{j,\ell}^{(\mathfrak{i})})$ be given. For every $Q\in \mathcal{Q}_j^1$ we choose $\mathbf{x}_0\mathrel{\mathop:}=\mathbf{x}_0(Q)\in\partial T \cap Q$. Moreover, let $n\mathrel{\mathop:}=n(Q)\in \mathbb{N}_0$ with $n<u$ such that
 \begin{align*}
		\partial_{\boldsymbol{\Theta}(\vartheta)}^m[f_1](\mathbf{x})=0\quad\text{and}\quad \partial_{\boldsymbol{\Theta}(\vartheta)}^n[f_1](\mathbf{x})\neq 0, & \quad\text{if }\, 0\leq m < n,\\
		f_1(\mathbf{x})\neq 0, & \quad\text{if }\, n=0,
 \end{align*}
is fulfilled for all $\mathbf{x}\in\partial T \cap Q$ and all $\vartheta\in\left(\theta_{j,\ell-2}^{(\mathfrak{i})},\theta_{j,\ell+2}^{(\mathfrak{i})}\right)$. Then there exists a constant $C_1>0$ such that
	\begin{equation*}
	\abs{\left\langle \mathfrak{f}^{2\pi},\psi_{j,\ell,\mathbf{y}}^{(\mathfrak{i})}\right\rangle_2}\leq C_1\,2^{-3j/4}\sum_{Q\in\mathcal{Q}_j^1}\frac{\left(1+2^{j/2}\abs{\sin(\theta_{j,\ell}^{(\mathfrak{i})}-\vartheta_{\mathbf{x}_0})}\right)^{-5/2}}{2^{jn}\Bigl(1+2^j\abs{2\pi\mathbf{y}-\mathbf{x}_0}_2^2\Bigr)^{q}},
	\end{equation*} 
	where $C_1=C_1(\mathfrak{f},\Psi^{(\mathfrak{i})},T)$ is independent of $j,\ell$ and $\mathbf{y}$.
 \end{theorem}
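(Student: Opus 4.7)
The plan is to decompose $\mathfrak{f}^{2\pi}$ on dyadic squares via the smooth partition of unity \cref{eq:zerl_der_eins}, isolate the squares through which the boundary $\partial T$ passes, and reduce the corresponding inner products to model problems controlled in Fourier space. Writing
\begin{equation*}
\langle\mathfrak{f}^{2\pi},\psi_{j,\ell,\mathbf{y}}^{(\mathfrak{i})}\rangle_2=\sum_{Q\in\mathcal{Q}_j^0}\langle\phi_Q\mathfrak{f}^{2\pi},\psi_{j,\ell,\mathbf{y}}^{(\mathfrak{i})}\rangle_2+\sum_{Q\in\mathcal{Q}_j^1}\langle\phi_Q\mathfrak{f}^{2\pi},\psi_{j,\ell,\mathbf{y}}^{(\mathfrak{i})}\rangle_2,
\end{equation*}
I would first dispose of the interior squares $Q\in\mathcal{Q}_j^0$: since $\phi_Q\mathfrak{f}^{2\pi}$ is $C^{u+1}$ with support of diameter $\sim 2^{-j/2}$, standard Fourier-side integration by parts (in the spirit of \cite{schober:detection}) yields decay in $j$ faster than any fixed polynomial, so those terms are absorbed into the stated bound. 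The essential work is then on the edge squares $\mathcal{Q}_j^1$.

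For fixed $Q\in\mathcal{Q}_j^1$ with $\mathbf{x}_0\in\partial T\cap Q$, the hypothesis $\partial^m_{\boldsymbol{\Theta}(\vartheta)}[f_1](\mathbf{x}_0)=0$ for $0\le m<n$ and all $\vartheta$ in the open interval $(\theta_{j,\ell-2}^{(\mathfrak{i})},\theta_{j,\ell+2}^{(\mathfrak{i})})$, combined with \cref{eq:m_te_richtungsableitung_allgemein}, forces every partial derivative $\partial^{\mathbf{r}}f_1(\mathbf{x}_0)$ with $|\mathbf{r}|_1<n$ to vanish, since the monomials $\boldsymbol{\Theta}(\vartheta)^{\mathbf{r}}$ of total degree $m$ are linearly independent as functions of $\vartheta$ on any open arc. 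Taylor's theorem to order $u$ then replaces $\phi_Q f_1\chi_T$ by a finite sum of model contributions proportional to $(\mathbf{x}-\mathbf{x}_0)^{\mathbf{r}}\chi_T(\mathbf{x})\phi_Q(\mathbf{x})$ with $n\le|\mathbf{r}|_1\le u$, plus a remainder of size $\lesssim\|f_1\|_{C_0^{u+1}}|\mathbf{x}-\mathbf{x}_0|^{u+1}$ on $\mathrm{supp}\,\phi_Q$, which is negligible.

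Each model inner product is then transferred to the Fourier side via Parseval, becoming (up to harmless constants) $2^{-3j/4}\sum_{\mathbf{k}\in\mathbb{Z}^2}\widehat{g_{\mathbf{r},Q}}(\mathbf{k})\,\overline{\Psi_{j,\ell}^{(\mathfrak{i})}(\mathbf{k})}\,e^{i2\pi\mathbf{k}^{\mathrm{T}}\mathbf{y}}$ with $g_{\mathbf{r},Q}(\mathbf{x})=(\mathbf{x}-\mathbf{x}_0)^{\mathbf{r}}\chi_T(\mathbf{x})\phi_Q(\mathbf{x})$, and then rewritten in polar coordinates $\mathbf{k}=\rho\boldsymbol{\Theta}(\theta)$. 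The support of $\Psi_{j,\ell}^{(\mathfrak{i})}$ confines $\rho$ to the dyadic shell $\sim 2^j$ and $\theta$ to an $\mathcal{O}(2^{-j/2})$ arc around $\theta_{j,\ell}^{(\mathfrak{i})}$. A direct computation of $\widehat{g_{\mathbf{r},Q}}$ from the local smooth graph parametrization of $\partial T$ yields an amplitude of leading order $\rho^{-|\mathbf{r}|_1}\lesssim 2^{-jn}$, which is the source of the $2^{-jn}$ factor. Applying $2q$ integrations by parts in $\rho$ against the phase $e^{i\rho\boldsymbol{\Theta}(\theta)^{\mathrm{T}}(\mathbf{x}-2\pi\mathbf{y})}$, bounded via the first estimate of \cref{lem:partielle_ableitung_psi:polar}, produces the spatial factor $(1+2^j|2\pi\mathbf{y}-\mathbf{x}_0|_2^2)^{-q}$. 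Integrations by parts in $\theta$ against the tangent-direction oscillation, whose wavenumber is $\rho|\sin(\theta-\vartheta_{\mathbf{x}_0})|\sim 2^j|\sin(\theta_{j,\ell}^{(\mathfrak{i})}-\vartheta_{\mathbf{x}_0})|$, yield after summation the angular factor $(1+2^{j/2}|\sin(\theta_{j,\ell}^{(\mathfrak{i})}-\vartheta_{\mathbf{x}_0})|)^{-5/2}$, with the growth $2^{jr/2}$ of the $\theta$-derivatives in \cref{lem:partielle_ableitung_psi:polar} exactly balanced by the factor $\rho\sim 2^j$ in the integrand.

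The main obstacle will be the Fourier analysis of $\widehat{g_{\mathbf{r},Q}}$: one must expose the $\rho^{-|\mathbf{r}|_1}$ gain uniformly in $\mathbf{x}_0$ and $Q$, correctly identify the tangent direction to $\partial T$ at $\mathbf{x}_0$ so that the subsequent $\theta$-integration by parts produces the factor $\sin(\theta_{j,\ell}^{(\mathfrak{i})}-\vartheta_{\mathbf{x}_0})$ rather than just $\sin(\theta-\theta_{j,\ell}^{(\mathfrak{i})})$, and keep every arising constant independent of $j$, $\ell$, and $\mathbf{y}$. The Taylor remainder and the periodization tail must also be shown to fit within the stated bound, but these are routine given the smoothness assumptions on $\mathfrak{f}$ and $\Psi^{(\mathfrak{i})}$.
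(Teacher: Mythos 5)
Your global frame (dyadic decomposition via \cref{eq:zerl_der_eins}, fast disposal of $\mathcal{Q}_j^0$, Parseval/Poisson and Fourier-side work on the edge squares) coincides with the paper's, but the step that is supposed to produce the factor $2^{-jn}$ does not work as you describe. After Taylor-expanding $f_1$ at the single point $\mathbf{x}_0$ you treat each monomial term $g_{\mathbf{r},Q}=(\mathbf{x}-\mathbf{x}_0)^{\mathbf{r}}\chi_T\,\phi_Q$ separately and claim its Fourier transform carries a gain of order $\rho^{-\abs{\mathbf{r}}_1}$ on $\mathrm{supp}\,\Psi_{j,\ell}^{(\mathfrak{i})}$. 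This is false: $(\mathbf{x}-\mathbf{x}_0)^{\mathbf{r}}$ vanishes to order $\abs{\mathbf{r}}_1$ only \emph{at the point} $\mathbf{x}_0$, not along $\partial T\cap Q$, so $g_{\mathbf{r},Q}$ still jumps across the edge with height up to $2^{-j\abs{\mathbf{r}}_1/2}$, and the only gain over the $n=0$ edge fragment is this amplitude factor $2^{-j\abs{\mathbf{r}}_1/2}$, not $\rho^{-\abs{\mathbf{r}}_1}\sim 2^{-j\abs{\mathbf{r}}_1}$. Concretely, take $T$ locally $\lbrace x_1\le 0\rbrace$, $\mathbf{x}_0=\mathbf{0}$ and the tangential monomial $\mathbf{r}=(0,1)$: in the boundary representation \cref{fourier_transformation_gauss} the $m=0$ term $C_0\,\rho^{-1}\int_{\partial T}x_2\,\mathrm{e}^{-\mathrm{i}\rho x_1}\,\mathrm{d}\sigma$ survives and is of size $\sim\rho^{-1}2^{-j}\sim 2^{-2j}$ at $\rho\sim 2^j$ near the normal direction, whereas the scale required for $n=1$ (square root of the bound in \cref{lem:norm_FT_Q1}) corresponds to $2^{-5j/2}$; so monomials with $\abs{\mathbf{r}}_1=n$ deliver only $2^{-jn/2}$, and your term-by-term estimate is short by $2^{jn/2}$. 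Only cancellation among the monomials (the near-vanishing of the whole Taylor polynomial \emph{along the edge piece}) could repair this, and your scheme discards exactly that structure. The paper uses the hypothesis in full: since $\partial^m_{\boldsymbol{\Theta}(\vartheta)}f_1=0$ for $0\le m<n$ at \emph{every} point of $\partial T\cap Q$ and every $\vartheta$ in the angular window, the derivative $\partial^n_{\boldsymbol{\Theta}(\theta_{j,\ell}^{(\mathfrak{i})})}\mathfrak{f}_Q$ produces no boundary terms and is a bounded edge fragment $\mathcal{K}_{j,\mathbf{0},\vartheta_{\mathbf{x}_0}}$; then \cref{eq:eigenschaft_fourier_richtungsableitung} gives $\mathcal{F}[\mathfrak{f}_Q](\boldsymbol{\xi})=(\mathrm{i}\,\boldsymbol{\Theta}^{\mathrm{T}}(\theta_{j,\ell}^{(\mathfrak{i})})\boldsymbol{\xi})^{-n}\mathcal{F}[\mathcal{K}](\boldsymbol{\xi})$, and on the shearlet support $\abs{\boldsymbol{\Theta}^{\mathrm{T}}(\theta_{j,\ell}^{(\mathfrak{i})})\boldsymbol{\xi}}\geq C\rho\sim 2^j$, which is the true source of $2^{-jn}$; the angular factor is then imported from the Cand\`es--Donoho edge-fragment estimate \cref{eq:int_FT_radius} via \cref{lem:norm_FT_Q1,lem:norm_Lq}.

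Two further weaknesses, secondary but real. The exponent $5/2$ cannot be obtained from the routine $\theta$-integration-by-parts heuristic you sketch (that mechanism has no reason to stop at $5/2$, and the boundary integral's amplitude itself varies with $\theta$); in the paper it is exactly the content of \cref{eq:int_FT_radius}, cited from \cite{candes:curvelets}. Likewise, integrating by parts only in $\rho$ yields decay in the component of $2\pi\mathbf{y}-\mathbf{x}_0$ along $\boldsymbol{\Theta}(\theta)$ only, not in $\abs{2\pi\mathbf{y}-\mathbf{x}_0}_2$; the paper gets the isotropic factor $\bigl(1+2^j\abs{2\pi\mathbf{y}-\mathbf{x}_0}_2^2\bigr)^{-q}$ by applying $L=I+2^j\Delta$ in Cartesian coordinates before any polar analysis. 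Finally, note that on edge squares the Taylor remainder is not negligible once this spatial decay factor is demanded (a crude sup-norm bound carries no such factor), which is precisely why the paper reserves the Taylor-polynomial reduction (\cref{lem:P_L}) for the lower bound of \cref{thm:hauptresultat2} and proves \cref{thm:hauptresultat} without it.
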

  For $\varepsilon>0$, $T\in \mathrm{STAR}^2(\tau)$ and $\mathbf{y}\in \mathcal{P}(\mathbf{N}_{j,\ell}^{(\mathfrak{i})})$ we define
  \begin{equation}\label{eq:U_epsilon}
  	U_\varepsilon(\mathbf{y})\mathrel{\mathop:}=U_{\varepsilon,T}(\mathbf{y})\mathrel{\mathop:}=\partial T\cap B_\varepsilon(2\pi\mathbf{y}).
  \end{equation}
  \begin{theorem}
  \label{thm:hauptresultat2}
  Let $0<\varepsilon_0\leq 1$ and a sufficiently large and even $j\in \mathbb{N}$, $\ell\in \mathbb{Z}$ with $\abs{\ell}<2^{j/2}$ and $\mathbf{y}\in \mathcal{P}(\mathbf{N}_{j,\ell}^{(\mathfrak{i})})$ be given. Moreover, we assume the following conditions:
  \begin{itemize}
  	\item [i)] For $\varepsilon=\varepsilon_0\,2^{-j/2}$ there exists $\mathbf{x}_0\in U_\varepsilon(\mathbf{y})$ with $\vartheta_{\mathbf{x}_0}\in\left(\theta_{j,\ell-2}^{(\mathfrak{i})},\theta_{j,\ell+2}^{(\mathfrak{i})}\right)$. 
	\item [ii)] For $n\in \mathbb{N}_0$ with $4(n+1)<u$ we have
  \begin{align} \label{eq:mainthm2}
 		\partial_{\boldsymbol{\Theta}(\vartheta)}^m[f_1](\mathbf{x})=0\quad\text{and}\quad \partial_{\boldsymbol{\Theta}(\vartheta)}^n[f_1](\mathbf{x})\neq 0, & \quad\text{if }\, 0\leq m < n,\\\notag
 		f_1(\mathbf{x})\neq 0, & \quad\text{if }\, n=0,
  \end{align}
  for all $\mathbf{x}\in U_\varepsilon(\mathbf{y})$ and all $\vartheta\in\left(\theta_{j,\ell-2}^{(\mathfrak{i})},\theta_{j,\ell+2}^{(\mathfrak{i})}\right)$. 
  \end{itemize}
  Then there exists a constant $C_2>0$ such that
		\begin{equation*}
			\abs{\left\langle \mathfrak{f}^{2\pi},\psi_{j,\ell,\mathbf{y}}^{(\mathfrak{i})}\right\rangle_2}\geq C_2\,2^{-j(3/4+n)},
		\end{equation*}
		where $C_2=C_2(\mathfrak{f},\Psi^{(\mathfrak{i})},T,\varepsilon_0)$ is independent of $j,\ell$ and $\mathbf{y}$.
  \end{theorem}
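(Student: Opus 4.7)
The strategy is a Fourier-side analysis: near the boundary point $\mathbf{x}_0$, the function $f_1\chi_T$ is approximated by its leading-order singularity model, whose shearlet coefficient is computed explicitly using \cref{lem:fourier_transformation_gauss} and turns out to scale exactly as $2^{-j(3/4+n)}$ with a nonvanishing constant, while every approximation error is of strictly smaller order. To isolate this contribution I use the partition of unity from \cref{eq:zerl_der_eins} to split $\mathfrak{f}^{2\pi}=\mathfrak{f}^{2\pi}\phi_{Q_0}+\mathfrak{f}^{2\pi}(1-\phi_{Q_0})$ with $Q_0\in\mathcal{Q}_j^1$ a dyadic square containing $\mathbf{x}_0$. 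The localization lemmata of \cref{sec:localization_lemmata} will bound the shearlet coefficient of $\mathfrak{f}^{2\pi}(1-\phi_{Q_0})$ and of the smooth piece $f_0\phi_{Q_0}$ by quantities of strictly smaller order than $2^{-j(3/4+n)}$, using the smoothness of $f_0$, of $f_1$ off $\partial T$, and the hypotheses $\vartheta_{\mathbf{x}_0}\in(\theta_{j,\ell-2}^{(\mathfrak{i})},\theta_{j,\ell+2}^{(\mathfrak{i})})$ and $\abs{2\pi\mathbf{y}-\mathbf{x}_0}_2\leq\varepsilon_0\,2^{-j/2}$.

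Applied to every direction $\boldsymbol{\Theta}(\vartheta)$ in the open angular window at every point of $U_\varepsilon(\mathbf{y})$, the vanishing-derivative hypothesis \cref{eq:mainthm2} forces all mixed partial derivatives $\partial^{\boldsymbol{\alpha}} f_1$ to vanish on $U_\varepsilon(\mathbf{y})$ for $\abs{\boldsymbol{\alpha}}_1\leq n-1$. Combined with $f_1=0$ on $\partial T$ (the $m=0$ case), Taylor-expanding $f_1$ in the normal direction at the nearest curve point yields, in a neighbourhood of $\partial T\cap Q_0$,
\[
f_1(\mathbf{x})=\frac{c(\mathbf{x}^*)}{n!}\bigl(\mathbf{n}(\mathbf{x}^*)^{\mathrm{T}}(\mathbf{x}-\mathbf{x}^*)\bigr)^n+O\bigl(\abs{\mathbf{x}-\mathbf{x}^*}_2^{n+1}\bigr),
\]
where $\mathbf{x}^*\in\partial T$ is the nearest curve point to $\mathbf{x}$ and $c(\mathbf{x}_0)=\partial^n_{\boldsymbol{\Theta}(\vartheta_{\mathbf{x}_0})}[f_1](\mathbf{x}_0)\neq 0$. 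Linearising $\partial T$ locally to its tangent line at $\mathbf{x}_0$ (equivalently, replacing $\chi_T$ by $\chi_{H^+}$ with $H^+=\{\mathbf{x}:\mathbf{n}(\mathbf{x}_0)^{\mathrm{T}}(\mathbf{x}-\mathbf{x}_0)\leq 0\}$) and replacing $c(\mathbf{x}^*)$ by $c(\mathbf{x}_0)$ produces the leading-order polynomial-cartoon model
\[
F_0(\mathbf{x})\mathrel{\mathop:}=\frac{c(\mathbf{x}_0)}{n!}\bigl(\mathbf{n}(\mathbf{x}_0)^{\mathrm{T}}(\mathbf{x}-\mathbf{x}_0)\bigr)^n\chi_{H^+}(\mathbf{x})\,\phi_{Q_0}(\mathbf{x}),
\]
and the curve-linearisation error, Taylor remainder, and variation $c(\mathbf{x}^*)-c(\mathbf{x}_0)$ each produce shearlet coefficients of order $o(2^{-j(3/4+n)})$ thanks to the smoothness budget $4(n+1)<u$.

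The Fourier coefficients of $F_0$ are provided in closed form by \cref{lem:fourier_transformation_gauss}, exhibiting the characteristic singular factor $\bigl(\mathbf{k}^{\mathrm{T}}\mathbf{n}(\mathbf{x}_0)\bigr)^{-(n+1)}$ along the frequency line normal to $\partial T$ at $\mathbf{x}_0$, multiplied by a smooth, rapidly decaying tangential factor. Using Parseval, $\langle F_0,\psi_{j,\ell,\mathbf{y}}^{(\mathfrak{i})}\rangle_2$ equals, up to a constant, $2^{-3j/4}\sum_{\mathbf{k}}\Psi_{j,\ell}^{(\mathfrak{i})}(\mathbf{k})\widehat{F_0}(\mathbf{k})\mathrm{e}^{\mathrm{i}\mathbf{k}^{\mathrm{T}}2\pi\mathbf{y}}$; converting the sum to an integral in the anisotropic polar frame of $\Psi_{j,\ell}^{(\mathfrak{i})}$ and combining the prefactor $2^{-3j/4}$ with the radial size $2^{-j(n+1)}$ of the singular factor at $\abs{\mathbf{k}}_2\sim 2^j$ and the $O(2^{3j/2})$ measure of the wedge produces exactly the scaling $2^{-j(3/4+n)}$ times an $O(1)$ integral. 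The main obstacle will be to bound this integral below by a strictly positive constant uniformly in $j,\ell,\mathbf{y}$: the near-alignment $\vartheta_{\mathbf{x}_0}\approx\theta_{j,\ell}^{(\mathfrak{i})}$ and the centering $\abs{2\pi\mathbf{y}-\mathbf{x}_0}_2\leq\varepsilon_0\,2^{-j/2}$ keep the phase $\mathrm{e}^{\mathrm{i}\mathbf{k}^{\mathrm{T}}2\pi\mathbf{y}}$ from producing cancellation across the wedge, while the non-vanishing of $c(\mathbf{x}_0)$ together with the regularity of $\Psi_{j,\ell}^{(\mathfrak{i})}$ supplied by \cref{lem:partielle_ableitung_psi:polar} provides the required uniform positive lower bound. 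Combined with the error estimates from the previous steps, this completes the argument.
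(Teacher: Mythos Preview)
Your Fourier-side strategy and scaling heuristic are correct in spirit, but two steps are genuine gaps.

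The boundary linearisation $\chi_T\to\chi_{H^+}$ is \emph{not} a lower-order error. At parabolic scaling the curve deviates from its tangent line by $O(2^{-j})$ over a tangential extent $O(2^{-j/2})$, which is exactly the shape of the shearlet's essential support; the stationary-phase computation in \cref{sec:proof_of_theorem_3_2} shows that the leading constant in $\abs{S(\mathbf{0})}$ depends explicitly on the curvature $A=\tfrac12\,t''_{k^*}(x_0)$. The curved and flat models therefore differ by a quantity of the \emph{same} order $2^{-j(3/4+n)}$, so your reverse triangle inequality cannot separate them. Second, the assertion that the remaining oscillatory integral is bounded below by a uniform positive constant is precisely the hard step, not a corollary of alignment: \cref{lem:partielle_ableitung_psi:polar} provides only upper bounds on derivatives of $\Psi_{j,\ell}^{(\mathfrak{i})}$ and cannot replace the stationary-phase reduction and the quantitative positivity lemma from \cite{schober:detection} that the paper actually invokes.

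For comparison, the paper neither linearises $\partial T$ nor localises spatially via $\phi_{Q_0}$ for the main term. It replaces $f_1$ by its global Taylor polynomial $p$ about $2\pi\mathbf{y}$ (the remainder is handled by \cref{lem:P_L}, which is where the hypothesis $4(n+1)<u$ is used), applies \cref{lem:fourier_transformation_gauss} directly to $P_u=p\,\chi_T$ with the genuine curved boundary, and then localises on the \emph{boundary-integral} side through \cref{lem:lokalisierungslemma} and \cref{lem:orientation_lemma}. Note that these lemmata are formulated for the arc integrals $I_k^{(\mathfrak{i})}$, not for a spatial cutoff, so your proposed use of them does not match their hypotheses. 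Retaining the curvature is what makes the stationary-phase argument produce an explicit nonvanishing constant; your linearised half-plane model with the smooth cutoff $\phi_{Q_0}$ is also not of the form $p\,\chi_T$ to which \cref{lem:fourier_transformation_gauss} applies.
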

  \begin{remark}
	  The two main results from \cite{schober:detection} can be found in the latter theorems as special cases. In \cref{thm:hauptresultat} we have the result from \cite[Theorem 1]{schober:detection} if $n=0$ and in \cref{thm:hauptresultat2} we have the result from \cite[Theorem 2]{schober:detection} if $n=0$, $\mathfrak{f}=\chi_T$ from \cref{def:cartoon_funktionen} is a characteristic function which means $f_0=0$ and $f_1=1$. It should be mentioned that the lower bound from \cref{thm:hauptresultat2} still holds holds true if the condition \cref{eq:mainthm2} is reduced to the nonzero-condition only for the $n$-th order directional derivative.
  \end{remark} 
   
 	\begin{figure}[t]\hspace{-0.2cm}
 		\subfloat{
 		{\includegraphics[width=.34\textwidth]{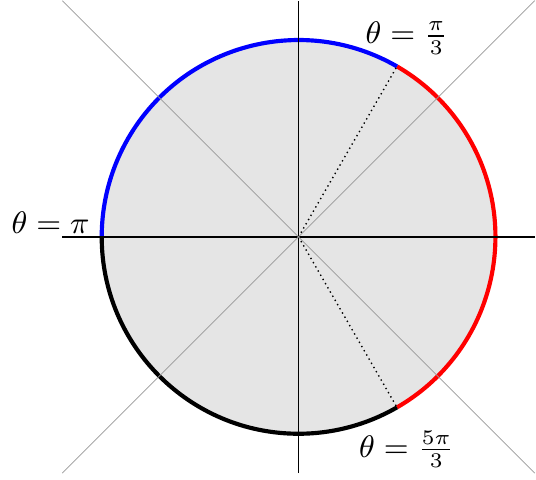}}
 		}\hspace{0.3cm}
 		\subfloat{
 		{\includegraphics[width=.61\textwidth]{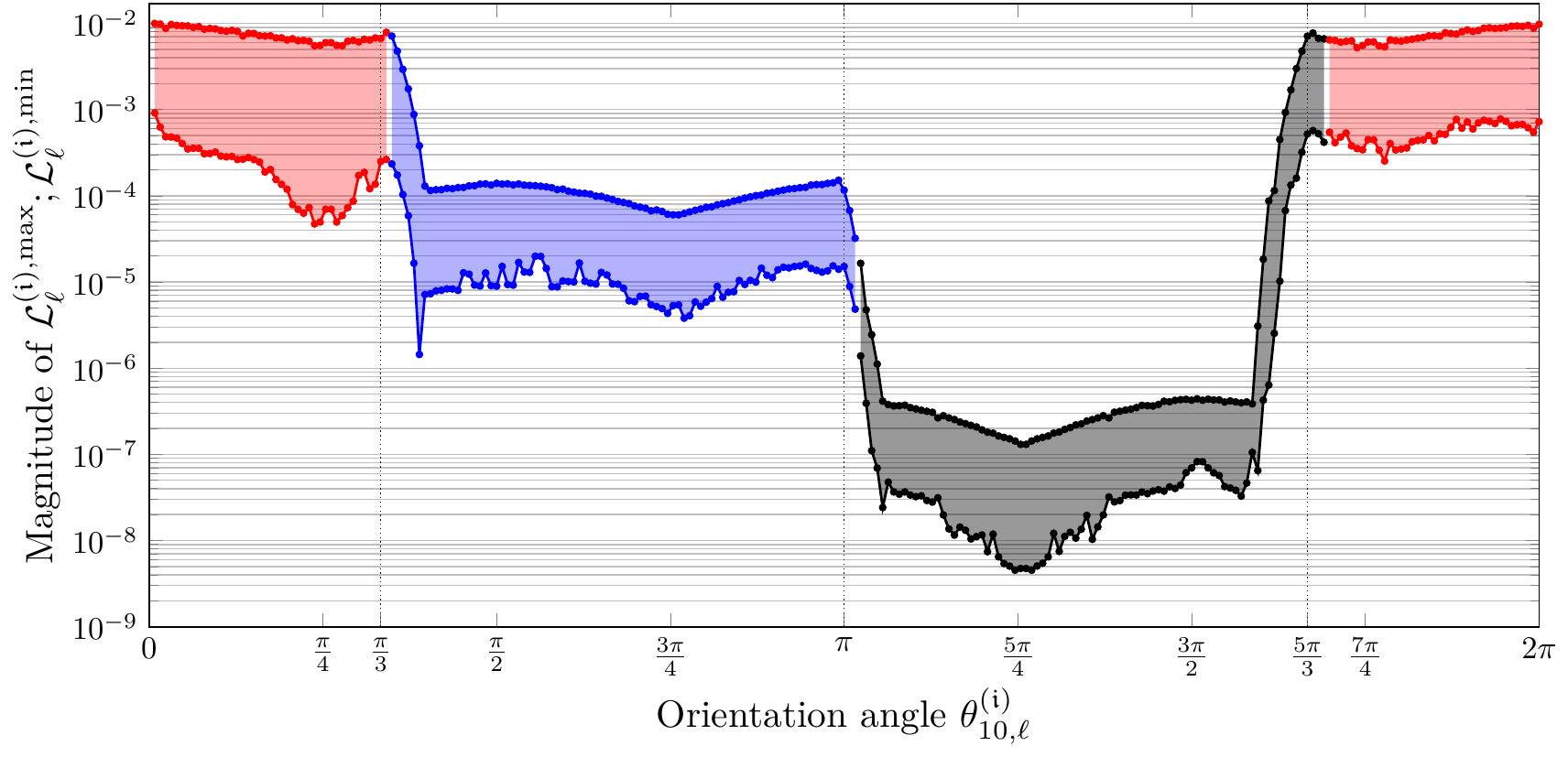}}
 		}
 	  	\caption[]{Left: Schematic visualization of the function from \cref{fig:numerics1} with colored boundary lines where the function has directional jump discontinuities of different orders. Right: Magnitudes of $\mathcal{L}^{(\mathfrak{i}),\mathrm{max}}_{\ell}$ and $\mathcal{L}^{(\mathfrak{i}),\mathrm{min}}_{\ell}$ from \cref{eq:L_l} as functions of the orientation angles $\theta_{10,\ell}^{(\mathfrak{i})}$.}\label{fig:numerics}
 	    \end{figure}
At the end of this section, we include a small numerical example to visualize the main results. We construct a cartoon-like function $\mathfrak{f}$ with directional jump discontinuities of different orders on a circle with radius $2$ (see \cref{fig:numerics1}). Using the parametrization $(2\cos\theta,\,2\sin\theta)^{\mathrm{T}}$, the parts of different smoothness on the boundary are separated at the angles $\theta\in \left\lbrace \frac{\pi}{3},\pi,\frac{5\pi}{3}\right\rbrace$ (see \cref{fig:numerics}). On the red line of the boundary, the function is discontinuous. On the blue line, the function has a jump discontinuity in the first directional derivative and on the black line in the second directional derivative in every direction unless the tangent direction. 
In this example, we choose $j=10$, $\varepsilon_0=\frac{1}{2}$ thus $\varepsilon=\frac{1}{2}\,2^{-5}=\frac{1}{64}$ and consider the matrix $\mathbf{M}_{10}=2^{10}\,\mathbf{I}_2$ with the two-dimensional identity matrix $\mathbf{I}_2$. We collect all pattern points $\mathbf{y}\in \mathcal{P}(\mathbf{M}_{10})$ for which there exists $\mathbf{x}_0\in U_\varepsilon(\mathbf{y})$ fulfilling $\vartheta_{\mathbf{x}_0}\in\left(\theta_{10,\ell-2}^{(\mathfrak{i})},\theta_{10,\ell+2}^{(\mathfrak{i})}\right)$ in the set $\mathcal{Y}_{\ell}^{(\mathfrak{i})}$ where $\ell\in \mathbb{Z}$ with $\abs{\ell}<32$ and $\mathfrak{i}\in \left\lbrace \mathfrak{h},\mathfrak{v} \right\rbrace$. We compute the values 
	\begin{equation}\label{eq:L_l}
		\mathcal{L}^{(\mathfrak{i}),\mathrm{max}}_{\ell}\mathrel{\mathop:}=\max\limits_{\mathbf{y}\in \mathcal{Y}_{\ell}^{(\mathfrak{i})}}\abs{\left\langle \mathfrak{f}^{2\pi},\psi_{j,\ell,\mathbf{y}}^{(\mathfrak{i})}\right\rangle_2},\quad \mathcal{L}^{(\mathfrak{i}),\mathrm{min}}_{\ell}\mathrel{\mathop:}=\min\limits_{\mathbf{y}\in \mathcal{Y}_{\ell}^{(\mathfrak{i})}}\abs{\left\langle \mathfrak{f}^{2\pi},\psi_{j,\ell,\mathbf{y}}^{(\mathfrak{i})}\right\rangle_2} 
	\end{equation} 
	and present them in the right picture of \cref{fig:numerics} as functions of the orientation angles $\theta_{10,\ell}^{(\mathfrak{i})}$. One can clearly see how the magnitude of the shearlet coefficients $\abs{\left\langle \mathfrak{f}^{2\pi},\psi_{j,\ell,\mathbf{y}}^{(\mathfrak{i})}\right\rangle_2}$ depends on the number of vanishing directional derivatives of the function $\mathfrak{f}^{2\pi}$ on the boundary curve as it was anticipated in the main results.
		
 
 \section{Proof of Theorem 3.1} 
 \label{sec:proof_of_theorem_3_1}
 The Fourier coefficients of a function $f\in L_1(\mathbb{T}^2)$ are given by
 \begin{equation*}
 	c_{\mathbf{k}}(f)\mathrel{\mathop:}=(2\pi)^{-2}\int_{\mathbb{T}^2}f(\mathbf{x})\,\mathrm{e}^{-\mathrm{i}\mathbf{k}^{\mathrm{T}}\mathbf{x}}\,\mathrm{d}\mathbf{x},\qquad\mathbf{k}\in \mathbb{Z}^2
 \end{equation*}
 and the Fourier transform of $f\in L_1(\mathbb{R}^2)$ is defined as
 	\begin{equation*}
 		\mathcal{F}[f](\mathbf{x})\mathrel{\mathop:}=\mathcal{F}f(\mathbf{x})\mathrel{\mathop:}=(2\pi)^{-2}\int_{\mathbb{R}^2}f(\boldsymbol{\xi})\,\mathrm{e}^{-\mathrm{i}\boldsymbol{\xi}^{\mathrm{T}}\mathbf{x}}\,\mathrm{d}\boldsymbol{\xi},\qquad\mathbf{x}\in \mathbb{R}^2,
 	\end{equation*}
 	and we have the operator
 	\begin{equation*}
 		\mathcal{F}^{-1}[f](\mathbf{x})\mathrel{\mathop:}=\mathcal{F}^{-1}f(\mathbf{x})\mathrel{\mathop:}=\int_{\mathbb{R}^2}f(\boldsymbol{\xi})\,\mathrm{e}^{\mathrm{i}\boldsymbol{\xi}^{\mathrm{T}}\mathbf{x}}\,\mathrm{d}\boldsymbol{\xi},\qquad\mathbf{x}\in \mathbb{R}^2. 
 	\end{equation*}
 Let $q\in \mathbb{N}_0$ and $\mathbf{r}\in \mathbb{N}_0^2$ with $\abs{\mathbf{r}}_1\leq q$. If $f\in L_1(\mathbb{R}^2)$ and $(\mathrm{i}\,\circ)^q\,f\in L_1(\mathbb{R}^2)$, then $\mathcal{F}f\in C^q(\mathbb{R}^2)$ and
 		\begin{equation}\label{eq:properties_fourier2}
 			\partial^\mathbf{r}\mathcal{F}f(\boldsymbol{\xi})=\mathcal{F}\left[(\mathrm{i}\,\circ)^\mathbf{r}\,f(\mathbf{x})\right](\boldsymbol{\xi}).
 		\end{equation}
 		For $f\in C^q(\mathbb{R}^2)$ and $\partial^\mathbf{r}f\in L_1(\mathbb{R}^2)$ we have
 		\begin{equation}\label{eq:properties_fourier3}
 			\mathcal{F}\left[ \partial^\mathbf{r}f \right](\boldsymbol{\xi})=(\mathrm{i}\,\boldsymbol{\xi})^\mathbf{r}\,\mathcal{F}f(\boldsymbol{\xi})
 		\end{equation}
 	   and from \cref{eq:m_te_richtungsableitung_allgemein} and \cref{eq:properties_fourier3}, it follows that the Fourier transform of the $m$-th order directional derivative of a function along a normalized direction $\mathbf{v}\in \mathbb{R}^2$ can be written as
 	    \begin{equation}\label{eq:eigenschaft_fourier_richtungsableitung} 	\mathcal{F}\left[\partial^m_{\mathbf{v}}f\right](\boldsymbol{\xi})=\sum_{\abs{\mathbf{r}}_1=m}\binom{m}{\mathbf{r}}\mathbf{v}^\mathbf{r}\mathcal{F}\left[\partial^{\mathbf{r}}f\right](\boldsymbol{\xi})=\mathrm{i}^m\mathcal{F}f(\boldsymbol{\xi})\sum_{\abs{\mathbf{r}}_1=m}\binom{m}{\mathbf{r}}\mathbf{v}^{\mathbf{r}}\boldsymbol{\xi}^\mathbf{r}=\mathrm{i}^m (\mathbf{v}^{\mathrm{T}}\boldsymbol{\xi})^m\mathcal{F}f(\boldsymbol{\xi}).
 	    \end{equation}
For the remainder of this section we fix the function $\phi\in C_0^\infty\left(\mathbb{R}^2\right)$ with $\mathrm{supp}\,\phi\subset(-\pi,\pi)^2$ and consider its scaled version $\phi_j\mathrel{\mathop:}=\phi\left(2^{j/2}\circ\right)$. Following the approach from \cite[Chapter 6.1]{candes:curvelets}, we assume that for sufficiently large $j\geq j_0$ the edge curve $\partial T$ can be parametrized on the support of $\phi_Q,\,Q\in \mathcal{Q}_j^1,\,$ either as $(x_1,E(x_1))^{\mathrm{T}}$ or $(E(x_2),x_2)^{\mathrm{T}}$.
\begin{definition}
	For $x_2\in\left[-2^{-j/2},2^{-j/2}\right]$ let $(E(x_2),x_2)^{\mathrm{T}}$ be a parametrization of $\partial T$ with $E(0)=E'(0)=0$. For $f\in C^2(\mathbb{R}^2)$ we call
	\begin{equation*}
		\mathcal{K}_j(\mathbf{x})=f(\mathbf{x})\,\phi_j(\mathbf{x})\,\chi_{\lbrace \mathbf{x}\,:\, x_1\geq t(x_2)\rbrace}(\mathbf{x})
	\end{equation*}
	standard edge fragment.
\end{definition}
Let $\mathcal{K}_{j,\mathbf{x}_0,\vartheta}$ be an arbitrary edge fragment, which means that the tangent in $\mathbf{x}_0\in\partial T$ is pointing in the direction $\boldsymbol{\Theta}(\vartheta)=(\cos{\vartheta},\sin{\vartheta})^{\mathrm{T}}$ for $\vartheta\in[0,2\pi)$. Then $\mathcal{K}_{j,\mathbf{0},0}=\mathcal{K}_j$ is a standard edge fragment and it was shown in \cite[Corollary 6.7]{candes:curvelets} that the corresponding Fourier transform fulfills
\begin{equation}\label{eq:rotate_edge_fragment}
	\mathcal{F}\mathcal{K}_{j,\mathbf{x}_0,\vartheta}(\boldsymbol{\xi})=\mathrm{e}^{-\mathrm{i}\,\mathbf{x}_0^{\mathrm{T}}\boldsymbol{\xi}}\,\mathcal{F}\mathcal{K}_j(\mathbf{R}_\vartheta^{\mathrm{T}}\,\boldsymbol{\xi}),
\end{equation}
where $\mathbf{R}_\vartheta$ is a rotation matrix by the angle $\vartheta$. Here we show the following lemma which generalizes \cite[Lemma 6]{schober:detection}.
		      \begin{lemma} 
		      \label{lem:norm_FT_Q1}
		      	For $\mathfrak{i}\in\lbrace \mathfrak{h},\mathfrak{v}\rbrace$ and $q\in \mathbb{N}_0$ let $\Psi^{(\mathfrak{i})}\in \mathcal{W}^q$ be given. Moreover, let $\mathcal{K}_{j,\mathbf{0},\vartheta}$ with $\vartheta\in \left( \theta_{j,\ell-2}^{(\mathfrak{i})},\theta_{j,\ell+2}^{(\mathfrak{i})} \right)$ be an arbitrary edge fragment and $f_j=f\,\phi_j$ a function with $\partial_{\boldsymbol{\Theta}(\theta_{j,\ell}^{(\mathfrak{i})})}^n f_j=\mathcal{K}_{j,\mathbf{0},\vartheta}$ for $n\in \mathbb{N}_0$. Then for $\mathbf{r}\in \mathbb{N}_0^2$ we have
		      	\begin{equation*}	
		\norm{\partial^{\mathbf{r}}\left[\mathcal{F}\left[f_j\right]\,\Psi_{j,\ell}^{(\mathfrak{i})}\right]}^2_{\mathrm{supp}\,\Psi_{j,\ell}^{(\mathfrak{i})},2}\leq C(n,\mathbf{r})\,2^{-j(3/2+2n+\abs{\mathbf{r}}_1)}\,\left(1+2^{j/2}\abs{\sin(\theta_{j,\ell}^{(\mathfrak{i})}-\vartheta)}\right)^{-5}.
		      	\end{equation*}
		      \end{lemma}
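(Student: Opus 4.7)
The strategy is to reduce the statement to the already established $n=0$ case (\cite[Lemma 6]{schober:detection}) by exploiting the Fourier identity for directional derivatives. Set $\mathbf{v}\mathrel{\mathop:}=\boldsymbol{\Theta}(\theta_{j,\ell}^{(\mathfrak{i})})$. Applying \cref{eq:eigenschaft_fourier_richtungsableitung} to the hypothesis $\partial_{\mathbf{v}}^n f_j=\mathcal{K}_{j,\mathbf{0},\vartheta}$ yields
\begin{equation*}
\mathrm{i}^n(\mathbf{v}^{\mathrm{T}}\boldsymbol{\xi})^n\,\mathcal{F}[f_j](\boldsymbol{\xi})=\mathcal{F}\bigl[\mathcal{K}_{j,\mathbf{0},\vartheta}\bigr](\boldsymbol{\xi}),
\end{equation*}
so that on any region where $\mathbf{v}^{\mathrm{T}}\boldsymbol{\xi}\neq 0$ one has $\mathcal{F}[f_j](\boldsymbol{\xi})=\mathrm{i}^{-n}(\mathbf{v}^{\mathrm{T}}\boldsymbol{\xi})^{-n}\mathcal{F}[\mathcal{K}_{j,\mathbf{0},\vartheta}](\boldsymbol{\xi})$.

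Next I would verify that the factor $(\mathbf{v}^{\mathrm{T}}\boldsymbol{\xi})^{-n}$ is well-controlled on $\mathrm{supp}\,\Psi_{j,\ell}^{(\mathfrak{i})}$. By the support description used in \cref{lem:partielle_ableitung_psi:polar}, every $\boldsymbol{\xi}$ in this support is of the form $2^j\rho\,\boldsymbol{\Theta}(\theta)$ with $\rho\in(1/3,2)$ and $\theta\in\left(\theta_{j,\ell-2}^{(\mathfrak{i})},\theta_{j,\ell+2}^{(\mathfrak{i})}\right)$, which is an angular window of width $O(2^{-j/2})$ around $\theta_{j,\ell}^{(\mathfrak{i})}$. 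Hence $\mathbf{v}^{\mathrm{T}}\boldsymbol{\xi}=2^j\rho\cos(\theta-\theta_{j,\ell}^{(\mathfrak{i})})$, and for $j$ sufficiently large one has $\cos(\theta-\theta_{j,\ell}^{(\mathfrak{i})})\geq 1/2$, so $\mathbf{v}^{\mathrm{T}}\boldsymbol{\xi}\gtrsim 2^j$. An explicit differentiation then gives, for every $\mathbf{s}\in\mathbb{N}_0^2$,
\begin{equation*}
\bigl|\partial^{\mathbf{s}}(\mathbf{v}^{\mathrm{T}}\boldsymbol{\xi})^{-n}\bigr|\leq C(n,\mathbf{s})\,2^{-j(n+\abs{\mathbf{s}}_1)}\qquad\text{on}\;\mathrm{supp}\,\Psi_{j,\ell}^{(\mathfrak{i})},
\end{equation*}
using $|\mathbf{v}|_2=1$.

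With these preparations, apply the Leibniz rule:
\begin{equation*}
\partial^{\mathbf{r}}\bigl[\mathcal{F}[f_j]\,\Psi_{j,\ell}^{(\mathfrak{i})}\bigr]=\mathrm{i}^{-n}\sum_{\mathbf{s}\leq\mathbf{r}}\binom{\mathbf{r}}{\mathbf{s}}\,\partial^{\mathbf{s}}\bigl[(\mathbf{v}^{\mathrm{T}}\boldsymbol{\xi})^{-n}\bigr]\,\partial^{\mathbf{r}-\mathbf{s}}\bigl[\mathcal{F}[\mathcal{K}_{j,\mathbf{0},\vartheta}]\,\Psi_{j,\ell}^{(\mathfrak{i})}\bigr].
\end{equation*}
Taking $L_2$-norms over $\mathrm{supp}\,\Psi_{j,\ell}^{(\mathfrak{i})}$, pulling out the pointwise bound from the previous step, and invoking the $n=0$ version (\cite[Lemma 6]{schober:detection}) on each summand to estimate
\begin{equation*}
\norm{\partial^{\mathbf{r}-\mathbf{s}}\bigl[\mathcal{F}[\mathcal{K}_{j,\mathbf{0},\vartheta}]\,\Psi_{j,\ell}^{(\mathfrak{i})}\bigr]}^2_{\mathrm{supp}\,\Psi_{j,\ell}^{(\mathfrak{i})},2}\leq C(\mathbf{r}-\mathbf{s})\,2^{-j(3/2+\abs{\mathbf{r}-\mathbf{s}}_1)}\bigl(1+2^{j/2}\abs{\sin(\theta_{j,\ell}^{(\mathfrak{i})}-\vartheta)}\bigr)^{-5},
\end{equation*}
the term with $\mathbf{s}=\mathbf{0}$ produces exactly the desired exponent $2^{-j(3/2+2n+\abs{\mathbf{r}}_1)}$ after squaring, while every $\mathbf{s}\neq\mathbf{0}$ contributes an additional factor $2^{-j\abs{\mathbf{s}}_1}$ that is more than compensated for (in fact, improves the bound). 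A finite sum over $\mathbf{s}\leq\mathbf{r}$ absorbs the remaining combinatorial constants into $C(n,\mathbf{r})$.

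\textbf{Main obstacle.} The only non-routine step is the support analysis needed in Step~2: showing both that $\mathbf{v}^{\mathrm{T}}\boldsymbol{\xi}$ is bounded below by a positive multiple of $2^j$ on $\mathrm{supp}\,\Psi_{j,\ell}^{(\mathfrak{i})}$ and that its derivatives scale sharply as $2^{-j(n+|\mathbf{s}|_1)}$. This uses crucially that the angular width of the window is $O(2^{-j/2})$ so that the cosine factor stays away from zero uniformly in $j$, while the radial factor is $\Theta(2^j)$; both are consequences of the polar-coordinate support description already used for \cref{lem:partielle_ableitung_psi:polar}. Once this is in hand, everything else is bookkeeping around Leibniz and the previously known $n=0$ estimate.
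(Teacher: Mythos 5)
Your argument is correct, and the exponent bookkeeping works out: the factor $\abs{\boldsymbol{\Theta}^{\mathrm{T}}(\theta_{j,\ell}^{(\mathfrak{i})})\boldsymbol{\xi}}\gtrsim 2^j$ on $\mathrm{supp}\,\Psi_{j,\ell}^{(\mathfrak{i})}$ and the derivative bound $\abs{\partial^{\mathbf{s}}(\mathbf{v}^{\mathrm{T}}\boldsymbol{\xi})^{-n}}\leq C(n,\mathbf{s})\,2^{-j(n+\abs{\mathbf{s}}_1)}$ are exactly right, and the $\mathbf{s}=\mathbf{0}$ term reproduces the claimed exponent while the others are strictly better. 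The route differs from the paper's in how the $\partial^{\mathbf{r}}$-derivatives are handled. The paper does not apply Leibniz to the symbol $(\mathbf{v}^{\mathrm{T}}\boldsymbol{\xi})^{-n}$ at all: it moves the derivative to the space side via \cref{eq:properties_fourier2}, writing $\mathbf{x}^{\mathbf{r}}f_j=2^{-j\abs{\mathbf{r}}_1/2}f_{j,\mathbf{r}}$ with $f_{j,\mathbf{r}}=f\,\phi_{\mathbf{r}}(2^{j/2}\,\circ)$ (the Cand\`es--Donoho moment trick), so that $\partial^{\mathbf{r}}\mathcal{F}f_j$ is again the Fourier transform of a function of the same type up to the factor $2^{-j\abs{\mathbf{r}}_1/2}$; it then inserts $(\mathbf{v}^{\mathrm{T}}\boldsymbol{\xi})^{-n}$ inside the integral, passes to polar coordinates, and invokes the radial estimate \cref{eq:int_FT_radius} together with the angular width bound \cref{proof:int_FT_Q1_2}, finishing with a Leibniz step only for the product with $\Psi_{j,\ell}^{(\mathfrak{i})}$. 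Your version instead treats $\partial^{\mathbf{r}-\mathbf{s}}\bigl[\mathcal{F}[\mathcal{K}_{j,\mathbf{0},\vartheta}]\,\Psi_{j,\ell}^{(\mathfrak{i})}\bigr]$ wholesale through the $n=0$ statement; this is cleaner in that it avoids the paper's slightly sloppy claim that $f_{j,\mathbf{r}}$ has the \emph{same} edge fragment as its $n$-th directional derivative (it is a different fragment with the same orientation, which is all that matters). The one caveat is your black box: the cited \cite[Lemma 6]{schober:detection} comes from the characteristic-function setting, so you should check that it (or its routine extension via \cref{eq:int_FT_radius}, which holds for edge fragments with a $C^2$ amplitude) covers arbitrary edge fragments $\mathcal{K}_{j,\mathbf{0},\vartheta}$ with smooth amplitude and general orientation; that extension is precisely the $n=0$ instance of the present lemma and is obtained by the paper's own polar-coordinate computation, so the reduction is sound but not entirely free.
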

  		\begin{proof}
 			We use an idea from \cite[Corollary 6.6]{candes:curvelets} and define $\phi_\mathbf{r}(\mathbf{x})\mathrel{\mathop:}=\mathbf{x}^\mathbf{r}\,\phi(\mathbf{x})$. It follows that $\phi_\mathbf{r}\left(2^{j/2}\circ\right)\in C^\infty_0\left(\mathbb{R}^2\right)$ and $\abs{\mathrm{supp}\,\phi_\mathbf{r}\left(2^{j/2}\circ\right)}\leq 2^{-j}$. We obtain the representation
 			\begin{equation*}
				\mathbf{x}^\mathbf{r}f_j(\mathbf{x})=2^{-j\abs{\mathbf{r}}_1/2}\,f(\mathbf{x})\,\phi_\mathbf{r}\left(2^{j/2}\,\mathbf{x}\right)=2^{-j\abs{\mathbf{r}_1}/2}f_{j,\mathbf{r}}(\mathbf{x}),
 			\end{equation*}
 			where $f_{j,\mathbf{r}}\mathrel{\mathop:}=f\,\phi_\mathbf{r}\left(2^{j/2}\,\circ\right)$. Note that the function $f_{j,\mathbf{r}}$ also fulfills $\partial_{\boldsymbol{\theta}(\theta_{j,\ell}^{(\mathfrak{i})})}^n f_{j,\mathbf{r}}=\mathcal{K}_{j,\mathbf{0},\vartheta}$. Using \cref{eq:properties_fourier2} we get
	 		\begin{equation}\label{proof:estimate_f_smooth5}
	 			\partial^\mathbf{r}\mathcal{F}f_j(\boldsymbol{\xi})=\mathcal{F}\left[( \mathrm{i}\,\circ)^\mathbf{r}f_j(\circ)\right](\boldsymbol{\xi})=\mathrm{i}^\mathbf{r}\,2^{-j\abs{\mathbf{r}}_1/2}\,\mathcal{F}\left[f(\circ)\,\phi_\mathbf{r}\left(2^{j/2}\circ\right)\right](\boldsymbol{\xi})
	 		\end{equation}
	 		and with \cref{proof:estimate_f_smooth5} and \cref{eq:eigenschaft_fourier_richtungsableitung} we have
 			\begin{align}
				\int\limits_{\mathrm{supp}\Psi_{j,\ell}^{(\mathfrak{i})}}\bigl\lvert\partial^{\mathbf{r}}\left[\mathcal{F}f_j\right](\boldsymbol{\xi})\bigr\rvert^2\mathrm{d}\boldsymbol{\xi}&=2^{-j\abs{\mathbf{r}}_1}\int\limits_{\mathrm{supp}\Psi_{j,\ell}^{(\mathfrak{i})}}\bigl\lvert\mathcal{F}f_{j,\mathbf{r}}(\boldsymbol{\xi})\bigr\rvert^2 \mathrm{d}\boldsymbol{\xi}\notag\\\label{proof:int_FT_Q1_0}
 				&=2^{-j\abs{\mathbf{r}}_1}\int\limits_{\mathrm{supp}\Psi_{j,\ell}^{(\mathfrak{i})}}\left\lvert\left( \boldsymbol{\Theta}^{\mathrm{T}}(\theta_{j,\ell}^{(\mathfrak{i})})\,\boldsymbol{\xi} \right)^{-n}\mathcal{F}[\mathcal{K}_{j,\mathbf{0},\vartheta}](\boldsymbol{\xi})\right\rvert^2 \mathrm{d}\boldsymbol{\xi}.
 			\end{align}
 			In the following, we need a result from \cite[Theorem 6.1]{candes:curvelets} given by
 	 		\begin{equation}\label{eq:int_FT_radius}
 	 			\int\limits_{\abs{\rho}\in I_j}\abs{\mathcal{F}\mathcal{K}_j\Bigr(\rho\,\boldsymbol{\Theta}(\theta-\vartheta)\Bigl)}^2 \mathrm{d}\rho\leq C\,2^{-2j}\,\Bigl(1+2^{j/2}\abs{\sin\left(\theta-\vartheta\right)}\Bigr)^{-5},
 	 	 	\end{equation}
 			where $I_j=\left[2^{j-1},2^{j+1}\right]$. In polar coordinates $\boldsymbol{\xi}=\rho\,\boldsymbol{\Theta}(\theta)$ with $\rho=\abs{\boldsymbol{\xi}}_2$ the inner product from \cref{proof:int_FT_Q1_0} fulfills
 			\begin{equation}\label{proof:int_FT_Q1_1}
 				\abs{\boldsymbol{\Theta}^{\mathrm{T}}(\theta_{j,\ell}^{(\mathfrak{i})})\,\boldsymbol{\xi}}=\abs{\rho\,\cos\left( \theta_{j,\ell}^{(\mathfrak{i})}-\theta\right)}\geq C_2\abs{\rho},
 			\end{equation}
 			if $\theta\in \left( \theta_{j,\ell-2}^{(\mathfrak{i})},\theta_{j,\ell+2}^{(\mathfrak{i})} \right)$. Additionally, we have
 			\begin{equation}\label{proof:int_FT_Q1_2}
 				\Bigl(\theta_{j,\ell+2}^{(\mathfrak{i})}-\theta_{j,\ell-2}^{(\mathfrak{i})}\Bigr)\leq C_3\,2^{-j/2}.
 			\end{equation}
 			We transform the integral from \cref{proof:int_FT_Q1_0} into polar coordinates and use \cref{eq:int_FT_radius}, \cref{proof:int_FT_Q1_1}, \cref{proof:int_FT_Q1_2} and \cite[Lemma 1]{schober:detection} to finally get 
  			\begin{align}
 \int\limits_{\mathrm{supp}\Psi_{j,\ell}^{(\mathfrak{i})}}\bigl\lvert\partial^{\mathbf{r}}\left[\mathcal{F}f_j\right](\boldsymbol{\xi})\bigr\rvert^2 \mathrm{d}\boldsymbol{\xi}&=2^{-j\abs{\mathbf{r}}_1}\int\limits_{\theta_{j,\ell-2}^{(\mathfrak{i})}}^{\theta_{j,\ell+2}^{(\mathfrak{i})}}\int\limits_{\frac{2^j}{3}}^{2^{j+1}}\abs{\left( \rho\,\cos\left( \theta_{j,\ell}^{(\mathfrak{i})}-\theta\right) \right)^{-n}\mathcal{F}\mathcal{K}_j\Bigr(\rho\,\boldsymbol{\Theta}(\theta-\vartheta)\Bigl)}^2\rho\, \mathrm{d}\rho\,\mathrm{d}\theta\notag\\
  				&\leq C_4(n,\mathbf{r})\,2^{-j(1+2n+\abs{\mathbf{r}}_1)}\int\limits_{\theta_{j,\ell-2}^{(\mathfrak{i})}}^{\theta_{j,\ell+2}^{(\mathfrak{i})}}\left(1+2^{j/2}\bigl\lvert\sin(\theta-\vartheta)\bigr\rvert\right)^{-5}\mathrm{d}\theta\notag\\\label{proof:int_FT_Q1_3}
  				&\leq C_5(n,\mathbf{r})\,2^{-j(3/2+2n+\abs{\mathbf{r}}_1)}\left(1+2^{j/2}\abs{\sin(\theta_{j,\ell}^{(\mathfrak{i})}-\vartheta)}\right)^{-5}.
  			\end{align}
			To obtain the desired estimate, we repeat the steps from the proof of \cite[Lemma 4]{schober:detection} and apply \cref{proof:int_FT_Q1_3}.
  		\end{proof}
		In the following, we consider the second order differential operator $L\mathrel{\mathop:}=I+2^j\Delta$ used in \cite{candes:curvelets,labate:sparse,schober:detection} where $\Delta\mathrel{\mathop:}=\partial^{(2,0)}+\partial^{(0,2)}$ is the Laplace operator. The following Lemma is a generalization of \cite[Lemma 8]{schober:detection}. The proof is similar and will be omitted.
      \begin{lemma}\label{lem:norm_Lq}
  		For $\mathfrak{i}\in\lbrace \mathfrak{h},\mathfrak{v}\rbrace$ and $q\in \mathbb{N}_0$ let $\Psi^{(\mathfrak{i})}\in \mathcal{W}^{2q}$ be given. We consider functions $f_{j,0}\mathrel{\mathop:}=f_0\,\phi_j$ with $f_0\in C_0^u\left(\mathbb{R}^2\right)$ for $u\in \mathbb{N}_0$ and for an arbitrary edge fragment $\mathcal{K}_{j,\mathbf{0},\vartheta}$ with $\vartheta\in \left( \theta_{j,\ell-2}^{(\mathfrak{i})},\theta_{j,\ell+2}^{(\mathfrak{i})} \right)$ let $f_{j,1}\mathrel{\mathop:}=f_1\,\phi_j$ such that $\partial_{\boldsymbol{\Theta}(\theta_{j,\ell}^{(\mathfrak{i})})}^n f_{j,1}=\mathcal{K}_{j,\mathbf{0},\vartheta}$ for $n\in \mathbb{N}_0$. Then there exist constants $C_1(u,q),C_2(n,q)>0$ such that
  		\begin{equation*}
  			\norm{L^q\left[ \mathcal{F}[h]\,\Psi_{j,\ell}^{(\mathfrak{i})} \right]}^2_{\mathrm{supp}\,\Psi_{j,\ell}^{(\mathfrak{i})},2}\leq 
  			\begin{cases}
  				C_1(u,q)\,2^{-j(2u+1)}, & \text{if  } h=f_{j,0},\vspace{0.3cm}\\
  				\dfrac{C_2(n,q)\,2^{-j(3/2+2n)}}{\left(1+2^{j/2}\abs{\sin\left(\theta_{j,\ell}^{(\mathfrak{i})}-\vartheta\right)}\right)^{5}}, & \text{if  } h=f_{j,1}.
  			\end{cases}
  		\end{equation*}
      \end{lemma}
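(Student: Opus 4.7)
The plan is to expand the $2q$-th order operator $L^q=(I+2^j\Delta)^q=\sum_{k=0}^{q}\binom{q}{k}2^{jk}\Delta^k$ via the binomial theorem and to distribute the $2k$ Cartesian derivatives from each $\Delta^k$ between $\mathcal{F}[h]$ and $\Psi_{j,\ell}^{(\mathfrak{i})}$ by Leibniz's rule. Applying the triangle inequality in $L_2(\mathrm{supp}\,\Psi_{j,\ell}^{(\mathfrak{i})})$ then reduces both estimates to controlling the finitely many summands of the form $4^{jk}\bigl\|\partial^{\mathbf{r}}\bigl[\mathcal{F}[h]\,\Psi_{j,\ell}^{(\mathfrak{i})}\bigr]\bigr\|_{\mathrm{supp}\,\Psi_{j,\ell}^{(\mathfrak{i})},2}^2$ with $\abs{\mathbf{r}}_1=2k\leq 2q$, after which the cases $h=f_{j,0}$ and $h=f_{j,1}$ separate cleanly.

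The edge-fragment case $h=f_{j,1}$ is immediate from \cref{lem:norm_FT_Q1}: applied with $\abs{\mathbf{r}}_1=2k$, that lemma yields
\[
\bigl\|\partial^{\mathbf{r}}\bigl[\mathcal{F}[f_{j,1}]\,\Psi_{j,\ell}^{(\mathfrak{i})}\bigr]\bigr\|_{\mathrm{supp}\,\Psi_{j,\ell}^{(\mathfrak{i})},2}^2 \leq C(n,\mathbf{r})\,2^{-j(3/2+2n+2k)}\bigl(1+2^{j/2}\abs{\sin(\theta_{j,\ell}^{(\mathfrak{i})}-\vartheta)}\bigr)^{-5},
\]
and the factor $4^{jk}=2^{2jk}$ coming from the binomial expansion exactly cancels the $2^{-2jk}$ in this bound, leaving the uniform rate $2^{-j(3/2+2n)}$ divided by the sine factor. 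Summing the finitely many resulting terms with the triangle inequality produces a constant $C_2(n,q)$ of the required form.

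For the smooth case $h=f_{j,0}$, \cref{lem:norm_FT_Q1} is unavailable, so one instead bounds $\partial^{\mathbf{s}}\mathcal{F}f_{j,0}$ pointwise on $\mathrm{supp}\,\Psi_{j,\ell}^{(\mathfrak{i})}$. Writing $\partial^{\mathbf{s}}\mathcal{F}f_{j,0}(\boldsymbol{\xi})=\mathcal{F}\bigl[(\mathrm{i}\circ)^{\mathbf{s}}f_{j,0}\bigr](\boldsymbol{\xi})$ via \cref{eq:properties_fourier2} and exploiting that $(\mathrm{i}\mathbf{x})^{\mathbf{s}}f_{j,0}$ is supported on a square of side $2^{-j/2}$ where $\abs{\mathbf{x}^{\mathbf{s}}}$ is of order $2^{-j\abs{\mathbf{s}}_1/2}$, repeated integration by parts built on \cref{eq:properties_fourier3} — together with $\abs{\boldsymbol{\xi}}_2\sim 2^j$ on $\mathrm{supp}\,\Psi_{j,\ell}^{(\mathfrak{i})}$ and $\norm{\phi_j}_{C_0^u}\leq C\,2^{ju/2}$ — yields a pointwise estimate of shape $\abs{\partial^{\mathbf{s}}\mathcal{F}f_{j,0}(\boldsymbol{\xi})}\leq C\,2^{-j(u/2+\abs{\mathbf{s}}_1/2+1)}$. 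Combining this with Cartesian-derivative bounds on $\Psi_{j,\ell}^{(\mathfrak{i})}$ extracted from \cref{lem:partielle_ableitung_psi:polar} through a polar-to-Cartesian conversion, integrating over the area-$2^{3j/2}$ support of $\Psi_{j,\ell}^{(\mathfrak{i})}$, and collecting the powers of $2^j$ produces the $2^{-j(2u+1)}$ rate with a constant $C_1(u,q)$.

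The main obstacle is the careful bookkeeping of the $j$-powers, which enter independently through the $\phi_j$-induced $2^{j/2}$ scaling in each derivative of $f_{j,0}$ and through the anisotropic shear $(\mathbf{N}_{j,\ell}^{(\mathfrak{i})})^{-\mathrm{T}}$ built into $\Psi_{j,\ell}^{(\mathfrak{i})}$. The overall structure, however, parallels \cite[Lemma 8]{schober:detection}: \cref{lem:norm_FT_Q1} serves as the single new ingredient replacing its lower-order analogue in the $f_{j,1}$-case, while the $f_{j,0}$-estimate is essentially unchanged, so no fundamentally new ideas are needed beyond this accounting.
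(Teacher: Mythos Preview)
Your proposal is correct and matches the paper's approach: the paper omits the proof entirely, stating only that it is a generalization of \cite[Lemma~8]{schober:detection} with a similar argument, and you have correctly identified that structure together with the role of \cref{lem:norm_FT_Q1} as the single new ingredient replacing its lower-order analogue in the $f_{j,1}$-case. The minor wording slip about Leibniz's rule is harmless, since \cref{lem:norm_FT_Q1} is already stated for derivatives of the product $\mathcal{F}[f_j]\,\Psi_{j,\ell}^{(\mathfrak{i})}$ and can be applied directly after the binomial expansion of $L^q$.
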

	  We are ready to prove the first main theorem of this paper.
	  \begin{proof}[Proof of \cref{thm:hauptresultat}]
	  Let $T\in \mathrm{STAR}^2(\tau)$ and $\mathfrak{f}\mathrel{\mathop:}=f\,\chi_{T}\in\mathcal{E}^{u+1}(\tau)$ be given. Using the smooth functions $\phi_Q\in C_0^{\infty}\left( \mathbb{R}^2 \right),\,Q\in \mathcal{Q}_j,$ from \cref{eq:phi_Q} which form a partition of unity in \cref{eq:zerl_der_eins}, we can decompose the function $\mathfrak{f}$ on dyadic squares as
  \begin{equation}\label{eq:zerl_f_Q}
  	\mathfrak{f}=\sum_{Q\in \mathcal{Q}_j}\mathfrak{f}_Q=\sum_{Q\in \mathcal{Q}_j^0}\mathfrak{f}_Q+\sum_{Q\in \mathcal{Q}_j^1}\mathfrak{f}_Q,
  \end{equation}
  where $\mathfrak{f}_Q\mathrel{\mathop:}=\mathfrak{f}\,\phi_Q$. It was observed in \cite[Section 5.1]{candes:curvelets} that there are constants $C_1,C_2>0$ with
  \begin{equation}\label{eq:maechtigkeit_Q_j}
  	\abs{\mathcal{Q}_j^0}\leq C_1\,2^j,\qquad\qquad\abs{\mathcal{Q}_j^1}\leq C_2\,2^{j/2}.
  \end{equation}
 	 We denote by $\mathfrak{f}_Q^{2\pi}$ the $2\pi$-periodization of $\mathfrak{f}_Q$. Since $\mathfrak{f}_Q\in L_1(\mathbb{R}^2)$ the Fourier coefficients of $\mathfrak{f}_Q^{2\pi}$ can be written as
 \begin{equation*}
 	c_{\mathbf{k}}(\mathfrak{f}_Q^{2\pi})=\mathcal{F}[\mathfrak{f}_Q](\mathbf{k}),\qquad \mathbf{k}\in \mathbb{Z}^2.
 \end{equation*}
  From \cref{eq:properties_fourier2} we get $\mathcal{F}[\mathfrak{f}_Q]\in C^{2q}(\mathbb{R}^2)$ because $\mathfrak{f}_Q$ is compactly supported. Moreover, the assumption $\Psi_{j,\ell}^{(\mathfrak{i})}\in \mathcal{W}^{2q}$ with $2q> 4$ implies $\mathcal{F}[\mathfrak{f}_Q]\,\Psi_{j,\ell}^{(\mathfrak{i})}\in C_0^{2q}(\mathbb{R}^2)$. Thus, we can use the Poisson summation formula and Parseval's identity (see \cite{schober:detection}) to obtain
  \begin{equation}\label{eq:beweis_der_oberen_schranke1}
  	\left\langle \mathfrak{f}_Q^{2\pi},\psi^{(\mathfrak{i})}_{j,\ell,\mathbf{y}}\right\rangle_2=2^{-3j/4}\sum_{\mathbf{k}\in \mathbb{Z}^2}\mathcal{F}[\mathfrak{f}_Q](\mathbf{k})\,\Psi^{(\mathfrak{i})}_{j,\ell}(\mathbf{k})\,\mathrm{e}^{2\pi\mathrm{i}\mathbf{k}^{\mathrm{T}}\mathbf{y}}=2^{-3j/4}\sum_{\mathbf{n}\in \mathbb{Z}^2}S_Q(\mathbf{n}),
  \end{equation}
where 
  \begin{equation*}
 	S_Q(\mathbf{n})\mathrel{\mathop:}=\mathcal{F}^{-1}\left[\mathcal{F}[\mathfrak{f}_Q]\Psi^{(\mathfrak{i})}_{j,\ell} \right]\Bigl(2\pi(\mathbf{y}+\mathbf{n})\Bigr)=\int\limits_{\mathbb{R}^2}\mathcal{F}[\mathfrak{f}_Q](\boldsymbol{\xi})\,\Psi^{(\mathfrak{i})}_{j,\ell}(\boldsymbol{\xi})\,\mathrm{e}^{2\pi\mathrm{i}\boldsymbol{\xi}^{\mathrm{T}}(\mathbf{y}+\mathbf{n})}\,\mathrm{d}\boldsymbol{\xi}.
  \end{equation*}
  We follow some of the steps in the proof of \cite[Theorem 3.1]{schober:detection} and use repeated partial integration and Hölder's inequality to obtain
  \begin{equation}\label{eq:beweis_der_oberen_schranke3}
  	\bigl\lvert S_Q(\mathbf{n})\bigr\rvert\leq 2^{3j/4}\Bigl(1+2^j\abs{2\pi(\mathbf{y}+\mathbf{n})}_2^2\Bigr)^{-q}\norm{L^q\left[\mathcal{F}[\mathfrak{f}_Q]\,\Psi^{(\mathfrak{i})}_{j,\ell}\right] }_{\mathrm{supp}\,\Psi_{j,\ell}^{(\mathfrak{i})},2}.
  \end{equation}
  In \cite[Theorem 3.1]{schober:detection} it was also shown that
   	\begin{equation*}
   		\sum_{\mathbf{n}\in \mathbb{Z}^2\setminus\{\mathbf{0}\}}\Bigl(1+2^j\abs{2\pi(\mathbf{y}+\mathbf{n}) }_2^2\Bigr)^{-q}\leq C_2(q)\,2^{-jq},
   	\end{equation*}
 	which leads to
 \begin{equation}\label{eq:beweis_der_oberen_schranke4}
 	2^{-3j/4}\sum_{\mathbf{n}\in \mathbb{Z}^2\setminus\{\mathbf{0}\}}\bigl\lvert S_Q(\mathbf{n})\bigr\rvert\leq C(q)\,2^{-jq}\norm{L^q\left[\mathcal{F}[\mathfrak{f}_Q]\,\Psi^{(\mathfrak{i})}_{j,\ell}\right] }_{\mathrm{supp}\,\Psi_{j,\ell}^{(\mathfrak{i})},2}.
\end{equation}
In the following, we distinguish wether the boundary $\partial T$ intersects the support of $\phi_Q$ or not and consider two different cases.
 \begin{itemize}
 	\item [i)] Let $Q\in \mathcal{Q}_j^0$: \\
	If $Q\cap T=\emptyset$, we have $\mathfrak{f}_Q=0$ and thus
 \begin{equation*}
 	\abs{\left\langle \mathfrak{f}_Q^{2\pi},\psi^{(\mathfrak{i})}_{j,\ell,\mathbf{y}}\right\rangle_2}=0.
 \end{equation*}  
 If $Q\subset T$, we choose $\mathbf{x}_1\in[-\pi,\pi]^2$ with
  \begin{equation}\label{eq:x1}
  	\abs{2\pi \mathbf{y}-\mathbf{x}_1}_2\geq C>0
  \end{equation} 
  and consider the function $\widetilde{\mathfrak{f}}_Q(\mathbf{x})\mathrel{\mathop:}=\mathfrak{f}_Q(\mathbf{x}+\mathbf{x}_1)$. From \cref{eq:rotate_edge_fragment} it follows that $\mathcal{F}[\mathfrak{f}_Q](\boldsymbol{\xi})=\mathrm{e}^{-\mathrm{i}\,\boldsymbol{\xi}^{\mathrm{T}}\mathbf{x}_1}\,\mathcal{F}[\widetilde{\mathfrak{f}}_Q](\boldsymbol{\xi})$ which implies
  \begin{equation*}
 	S_Q(\mathbf{0})=\mathcal{F}^{-1}\left[\mathcal{F}[\mathfrak{f}_Q]\Psi^{(\mathfrak{i})}_{j,\ell} \right](2\pi\mathbf{y})=\int\limits_{\mathbb{R}^2}\mathcal{F}[\widetilde{\mathfrak{f}}_Q](\boldsymbol{\xi})\,\Psi^{(\mathfrak{i})}_{j,\ell}(\boldsymbol{\xi})\,\mathrm{e}^{\mathrm{i}\,\boldsymbol{\xi}^{\mathrm{T}}(2\pi\mathbf{y}-\mathbf{x}_1)}\,\mathrm{d}\boldsymbol{\xi}.
  \end{equation*} 
  Since $\mathcal{F}[\widetilde{\mathfrak{f}}_Q]\,\Psi_{j,\ell}^{(\mathfrak{i})}\in C_0^q(\mathbb{R}^2)$, we can repeat the steps which led to \cref{eq:beweis_der_oberen_schranke3} and use \cref{eq:x1} to obtain
 \begin{equation}\label{eq:beweis_der_oberen_schranke5}
 	\bigl\lvert S_Q(\mathbf{0}) \bigr\rvert\leq2^{-j(q-3/4)}\norm{L^q\left[\mathcal{F}[\widetilde{\mathfrak{f}}_Q]\,\Psi^{(\mathfrak{i})}_{j,\ell}\right] }_{\mathrm{supp}\,\Psi_{j,\ell}^{(\mathfrak{i})},2}.
 \end{equation}
 Finally, the estimates \cref{eq:beweis_der_oberen_schranke4}, \cref{eq:beweis_der_oberen_schranke5} and the first case of \cref{lem:norm_Lq} plugged in \cref{eq:beweis_der_oberen_schranke1} lead to
 \begin{equation}\label{eq:beweis_der_oberen_schranke6}
 	\abs{\left\langle \mathfrak{f}_Q^{2\pi},\psi^{(\mathfrak{i})}_{j,\ell,\mathbf{y}}\right\rangle_2}\leq 2^{-3j/4}\Bigl( \abs{S_Q(\mathbf{0})}+\hspace{-0.3cm}\sum_{\mathbf{n}\in \mathbb{Z}^2\setminus\{\mathbf{0}\}}\bigl\lvert S_Q(\mathbf{n})\bigr\rvert \Bigr)\leq C_1(u,q)\,2^{-j(q+u+3/2)}.
 \end{equation}
\item[ii)] Let $Q\in \mathcal{Q}_j^1$:\\ 
Then we have
  					\begin{equation*}
						S_Q(\mathbf{0})=\int\limits_{\mathbb{R}^2}\mathcal{F}[\mathfrak{f}_Q]\,\Psi^{(\mathfrak{i})}_{j,\ell}(\boldsymbol{\xi})\,\mathrm{e}^{\mathrm{i}\,\boldsymbol{\xi}^{\mathrm{T}}(2\pi\mathbf{y}-\mathbf{x}_0)}\,\mathrm{d}\boldsymbol{\xi},
  					\end{equation*}
  					where $\partial_{\boldsymbol{\Theta}(\theta_{j,\ell}^{(\mathfrak{i})})}^n \mathfrak{f}_Q=\mathcal{K}_{j,\mathbf{0},\vartheta_{\mathbf{x}_0}}$ and $\mathcal{K}_{j,\mathbf{0},\vartheta_{\mathbf{x}_0}}$ is an arbitrary edge fragment. With the same arguments as before we see that
 					\begin{equation}\label{eq:beweis_der_oberen_schranke7}
 						\bigl\lvert S_Q(\mathbf{0}) \bigr\rvert\leq2^{3j/4}\Bigl(1+2^j\abs{2\pi\mathbf{y}-\mathbf{x}_0}_2^2\Bigr)^{-q}\norm{L^q\left[\mathcal{F}[f_Q]\,\Psi^{(\mathfrak{i})}_{j,\ell}\right] }_{\mathrm{supp}\,\Psi_{j,\ell}^{(\mathfrak{i})},2}.
 					\end{equation}
 					From \cref{eq:beweis_der_oberen_schranke4}, \cref{eq:beweis_der_oberen_schranke7} and the second case of \cref{lem:norm_Lq} we deduce 
 \begin{align}\label{eq:beweis_der_oberen_schranke8}
 \abs{\left\langle \mathfrak{f}_Q^{2\pi},\psi^{(\mathfrak{i})}_{j,\ell,\mathbf{y}}\right\rangle_2}&\leq 2^{-3j/4}\Bigl( \abs{S_Q(\mathbf{0})}+\hspace{-0.3cm}\sum_{\mathbf{n}\in \mathbb{Z}^2\setminus\{\mathbf{0}\}}\bigl\lvert S_Q(\mathbf{n})\bigr\rvert \Bigr)\notag\\
&\leq C_3(n,q)\,2^{-3j/4}\frac{\left(1+2^{j/2}\abs{\sin(\theta_{j,\ell}^{(\mathfrak{i})}-\vartheta_{\mathbf{x}_0})}\right)^{-5/2}}{2^{jn}\Bigl(1+2^j\abs{2\pi\mathbf{y}-\mathbf{x}_0}_2^2\Bigr)^{q}}.
 \end{align}
 		With the decomposition in \cref{eq:zerl_f_Q} we can use the estimates in \cref{eq:beweis_der_oberen_schranke6} and \cref{eq:beweis_der_oberen_schranke8} to get
 		\begin{align}\label{eq:beweis_der_oberen_schranke9}
 			\abs{\left\langle \mathfrak{f}^{2\pi},\psi^{(\mathfrak{i})}_{j,\ell,\mathbf{y}}\right\rangle_2}&\leq\sum_{Q\in \mathcal{Q}_j^0}\abs{\left\langle \mathfrak{f}_Q^{2\pi},\psi^{(\mathfrak{i})}_{j,\ell,\mathbf{y}}\right\rangle_2}+\sum_{Q\in \mathcal{Q}_j^1} \abs{\left\langle \mathfrak{f}_Q^{2\pi},\psi^{(\mathfrak{i})}_{j,\ell,\mathbf{y}}\right\rangle_2}\notag\\
 			&\leq C_4(u,n,q)\,2^{-3j/4}\sum_{Q\in\mathcal{Q}_j^1}\frac{\left(1+2^{j/2}\abs{\sin(\theta_{j,\ell}^{(\mathfrak{i})}-\vartheta_{\mathbf{x}_0})}\right)^{-5/2}}{2^{jn}\Bigl(1+2^j\abs{2\pi\mathbf{y}-\mathbf{x}_0}_2^2\Bigr)^{q}}.
 		\end{align}
    	 To finish the proof we consider general cartoon-like functions $\mathfrak{f}_0\in \mathcal{E}^{u+1}(\tau)$ of the form $\mathfrak{f}_0=f_0+f\,\chi_T=f_0+\mathfrak{f}$, where $\mathfrak{f}=f\,\chi_T$, $f_0,f\in C_0^{u+1}\left(\mathbb{R}^2\right)$ and $T\in \mathrm{STAR}^2(\tau)$. For the function $f_0$ we define $f_{0,Q}\mathrel{\mathop:}=f_0\,\phi_Q$ and have the representation
    	 \begin{equation*}
    	 	f_0=\sum_{Q\in \mathcal{Q}_j^0}f_{0,Q},
    	 \end{equation*}
    	 since $\mathcal{Q}_j^1=\emptyset$. With \cref{eq:maechtigkeit_Q_j} and \cref{eq:beweis_der_oberen_schranke6} we get
    	 \begin{equation}\label{eq:f_0}
    	 	\abs{\left\langle f_0^{2\pi},\psi^{(\mathfrak{i})}_{j,\ell,\mathbf{y}}\right\rangle_2}\leq\sum_{Q\in \mathcal{Q}_j^0}\abs{\left\langle f_{0,Q}^{2\pi},\psi^{(\mathfrak{i})}_{j,\ell,\mathbf{y}}\right\rangle_2}\leq C_5(u,q)\,2^{-j(q+u+1/2)}.
    	 \end{equation}
    	The estimates \cref{eq:beweis_der_oberen_schranke9} and \cref{eq:f_0} lead to
 	\begin{align*}
 		\abs{\left\langle \mathfrak{f}_0^{2\pi},\psi^{(\mathfrak{i})}_{j,\ell,\mathbf{y}}\right\rangle_2}&\leq\abs{\left\langle f_0^{2\pi},\psi^{(\mathfrak{i})}_{j,\ell,\mathbf{y}}\right\rangle_2}+\abs{\left\langle \mathfrak{f}^{2\pi},\psi^{(\mathfrak{i})}_{j,\ell,\mathbf{y}}\right\rangle_2}\\
 		&\leq C_6(u,n,q)\,2^{-3j/4}\sum_{Q\in\mathcal{Q}_j^1}\frac{\left(1+2^{j/2}\abs{\sin(\theta_{j,\ell}^{(\mathfrak{i})}-\vartheta_{\mathbf{x}_0})}\right)^{-5/2}}{2^{jn}\Bigl(1+2^j\abs{2\pi\mathbf{y}-\mathbf{x}_0}_2^2\Bigr)^{q}},
 	\end{align*}
 	since $n<u$ and the proof is finished.
\end{itemize}\vspace{-0.8cm}
\end{proof}

\section{Localization lemmata} 
\label{sec:localization_lemmata}
  
   Let $T\in \mathrm{STAR^2}(\tau)$ and $p\mathrel{\mathop:}=p_u:\mathbb{R}^2\rightarrow \mathbb{R}$ be a bivariate polynomial of order $u$. In \cite[Lemma 4.1]{labate:smooth} the authors showed that the Fourier transform of the function $P_u\mathrel{\mathop:}=p\,\chi_T$ can be written as
   	 \begin{equation}\label{fourier_transformation_gauss}
   		\mathcal{F}P_u(\boldsymbol{\xi})=(2\pi)^{-2}\int\limits_T p(\mathbf{x})\,\mathrm{e}^{-\mathrm{i}\boldsymbol{\xi}^{\mathrm{T}}\mathbf{x}}\,\mathrm{d}\mathbf{x}=\sum\limits_{m=0}^u\frac{C_m}{\abs{\boldsymbol{\xi}}_2^{m+2}}\int\limits_{\partial T}p_m(\mathbf{x},\boldsymbol{\xi})\,\mathrm{e}^{-\mathrm{i}\boldsymbol{\xi}^{\mathrm{T}}\mathbf{x}}\,\boldsymbol{\xi}^{\mathrm{T}}\mathbf{n}(\mathbf{x})\,\mathrm{d}\sigma(\mathbf{x})
   	 \end{equation}
 	 with constants $C_0,\hdots, C_u>0$ and functions $p_0(\mathbf{x},\boldsymbol{\xi})\mathrel{\mathop:}=p(\mathbf{x})$, $p_m(\mathbf{x},\boldsymbol{\xi})\mathrel{\mathop:}=\frac{\boldsymbol{\xi}^{\mathrm{T}}}{\abs{\boldsymbol{\xi}}_2}\mathrm{grad}_\mathbf{x}[p_{m-1}](\mathbf{x},\boldsymbol{\xi})$ and the outer normal vector of the boundary $\partial T$ given by $\mathbf{n}(\mathbf{x})$.\\

 In the following lemma, we derive an explicit expression for the functions $p_m$ which gives a new representation of \cref{fourier_transformation_gauss} in polar coordinates.
    \begin{lemma}
    \label{lem:fourier_transformation_gauss}
	Let $T\in \mathrm{STAR^2}(\tau)$ and $p=p_u:\mathbb{R}^2\rightarrow \mathbb{R}$ be a bivariate polynomial of order $u$. Then there exist constants $C_0,\hdots, C_u>0$ such that the Fourier transform of the function $P_u=p\,\chi_T$ is of the form
   \begin{equation}\label{eq:T_fourier}
   	\mathcal{F}P_u\left(\rho\,\boldsymbol{\Theta}(\theta)\right)=\sum\limits_{m=0}^{u}\frac{C_m}{\rho^{m+1}}\int\limits_{\partial T}\,\partial_{\boldsymbol{\Theta}(\theta)}^m[p](\mathbf{x})\,\mathrm{e}^{-\mathrm{i}\,\rho\,\boldsymbol{\Theta}^{\mathrm{T}}(\theta)\mathbf{x}}\,\boldsymbol{\Theta}^{\mathrm{T}}(\theta)\,\mathbf{n}(\mathbf{x})\,\mathrm{d}\sigma(\mathbf{x}).
   \end{equation}
   \end{lemma}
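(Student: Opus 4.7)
The plan is to start from the representation \cref{fourier_transformation_gauss} established in \cite[Lemma 4.1]{labate:smooth} and simply substitute the polar coordinate form $\boldsymbol{\xi} = \rho\,\boldsymbol{\Theta}(\theta)$. Doing so gives $\abs{\boldsymbol{\xi}}_2 = \rho$, hence the prefactor $C_m / \abs{\boldsymbol{\xi}}_2^{m+2}$ becomes $C_m/\rho^{m+2}$; moreover $\boldsymbol{\xi}^{\mathrm{T}}\mathbf{n}(\mathbf{x}) = \rho\,\boldsymbol{\Theta}^{\mathrm{T}}(\theta)\,\mathbf{n}(\mathbf{x})$, producing an additional factor of $\rho$ that, combined with the prefactor, yields precisely the $C_m/\rho^{m+1}$ appearing in \cref{eq:T_fourier}. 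The phase $\mathrm{e}^{-\mathrm{i}\boldsymbol{\xi}^{\mathrm{T}}\mathbf{x}}$ becomes $\mathrm{e}^{-\mathrm{i}\rho\,\boldsymbol{\Theta}^{\mathrm{T}}(\theta)\mathbf{x}}$ without further work.

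The main (and only non-cosmetic) step is to identify the functions $p_m(\mathbf{x},\boldsymbol{\xi})$, evaluated at $\boldsymbol{\xi}=\rho\,\boldsymbol{\Theta}(\theta)$, with the iterated directional derivative $\partial_{\boldsymbol{\Theta}(\theta)}^{m}[p](\mathbf{x})$. I will do this by induction on $m$. The base case $m=0$ is immediate from the definition $p_0(\mathbf{x},\boldsymbol{\xi}) = p(\mathbf{x}) = \partial_{\boldsymbol{\Theta}(\theta)}^0[p](\mathbf{x})$. For the inductive step, assume $p_{m-1}(\mathbf{x},\rho\,\boldsymbol{\Theta}(\theta)) = \partial_{\boldsymbol{\Theta}(\theta)}^{m-1}[p](\mathbf{x})$. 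Then the recursion
\begin{equation*}
p_m(\mathbf{x},\rho\,\boldsymbol{\Theta}(\theta)) = \frac{\rho\,\boldsymbol{\Theta}^{\mathrm{T}}(\theta)}{\rho}\,\mathrm{grad}_\mathbf{x}\bigl[p_{m-1}(\cdot,\rho\,\boldsymbol{\Theta}(\theta))\bigr](\mathbf{x}) = \boldsymbol{\Theta}^{\mathrm{T}}(\theta)\,\mathrm{grad}_\mathbf{x}\bigl[\partial_{\boldsymbol{\Theta}(\theta)}^{m-1}[p]\bigr](\mathbf{x})
\end{equation*}
coincides, by the very definition $\partial_\mathbf{v}f(\mathbf{x}) = \mathbf{v}^{\mathrm{T}}\mathrm{grad}\,f(\mathbf{x})$ recalled after \cref{def:cartoon_funktionen}, with $\partial_{\boldsymbol{\Theta}(\theta)}\bigl[\partial_{\boldsymbol{\Theta}(\theta)}^{m-1}[p]\bigr](\mathbf{x}) = \partial_{\boldsymbol{\Theta}(\theta)}^{m}[p](\mathbf{x})$, closing the induction.

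Inserting this identification into \cref{fourier_transformation_gauss} yields the claim. There is no real obstacle here: once the recursion defining $p_m$ is recognised as the iterated application of the directional derivative operator $\partial_{\boldsymbol{\Theta}(\theta)}$, the lemma reduces to bookkeeping of one factor of $\rho$ between numerator and denominator. The only small point to be careful about is that $p_{m-1}$ is being differentiated with respect to the spatial variable $\mathbf{x}$ with $\boldsymbol{\xi}$ held fixed, which is exactly why the substitution $\boldsymbol{\xi} = \rho\,\boldsymbol{\Theta}(\theta)$ commutes with the gradient in the inductive step.
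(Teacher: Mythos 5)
Your proposal is correct, and it follows the same skeleton as the paper's proof: start from the representation \cref{fourier_transformation_gauss} of \cite[Lemma 4.1]{labate:smooth}, identify $p_m(\mathbf{x},\rho\,\boldsymbol{\Theta}(\theta))$ with $\partial^m_{\boldsymbol{\Theta}(\theta)}[p](\mathbf{x})$ by induction on $m$, and absorb the factor $\rho$ coming from $\boldsymbol{\xi}^{\mathrm{T}}\mathbf{n}(\mathbf{x})$ into the prefactor to pass from $\abs{\boldsymbol{\xi}}_2^{-(m+2)}$ to $\rho^{-(m+1)}$. The difference lies in how the induction is closed. The paper first proves the explicit multinomial formula $p_m(\mathbf{x},\boldsymbol{\xi})=\abs{\boldsymbol{\xi}}_2^{-m}\sum_{\abs{\mathbf{r}}_1=m}\binom{m}{\mathbf{r}}\,\boldsymbol{\xi}^{\mathbf{r}}\,\partial^{\mathbf{r}}[p](\mathbf{x})$ for arbitrary $\boldsymbol{\xi}$, which requires recombining binomial coefficients in the inductive step, and only afterwards passes to polar coordinates and invokes \cref{eq:m_te_richtungsableitung_allgemein} to recognize the directional derivative. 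You instead observe that, once $\boldsymbol{\xi}=\rho\,\boldsymbol{\Theta}(\theta)$ is held fixed, the recursion $p_m=\frac{\boldsymbol{\xi}^{\mathrm{T}}}{\abs{\boldsymbol{\xi}}_2}\,\mathrm{grad}_\mathbf{x}\,p_{m-1}$ is literally the recursive definition $\partial^m_{\mathbf{v}}=\partial_{\mathbf{v}}\bigl[\partial^{m-1}_{\mathbf{v}}\bigr]$ with $\mathbf{v}=\boldsymbol{\Theta}(\theta)$, since $\abs{\boldsymbol{\Theta}(\theta)}_2=1$. This is the leaner route: it avoids the multinomial bookkeeping entirely and uses nothing beyond the definition of the directional derivative, whereas the paper's version additionally yields the closed form of $p_m$ for general $\boldsymbol{\xi}$, which is not used elsewhere. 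Your explicit remark that the gradient acts in $\mathbf{x}$ with $\boldsymbol{\xi}$ held fixed is precisely what makes the specialization commute with the recursion, so there is no gap.
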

    \begin{proof}
   	    By induction on the variable $m$ we show that
       	 \begin{equation*} 	p_m(\mathbf{x},\boldsymbol{\xi})=\abs{\boldsymbol{\xi}}_2^{-m}\sum\limits_{\abs{\mathbf{r}}_1=m}\binom{m}{\mathbf{r}}\,\boldsymbol{\xi}^{\mathbf{r}}\,\partial^{\mathbf{r}}[p](\mathbf{x}).
       	 \end{equation*}
   		For $m=0$ we have $p_0(\mathbf{x},\boldsymbol{\xi})=p$. Suppose there exists $m\in \mathbb{N}$ such that 
   		\begin{equation*}
			p_m(\mathbf{x},\boldsymbol{\xi})=\frac{\boldsymbol{\xi}^{\mathrm{T}}}{\abs{\boldsymbol{\xi}}_2}\mathrm{grad}_\mathbf{x}[p_{m-1}](\mathbf{x},\boldsymbol{\xi})=\abs{\boldsymbol{\xi}}_2^{-m}\sum\limits_{\abs{\mathbf{r}}_1=m}\binom{m}{\mathbf{r}}\,\boldsymbol{\xi}^{\mathbf{r}}\,\partial^{\mathbf{r}}p.
   		\end{equation*} 
 		It follows that
   	   	 \begin{align*}
   	   	 	\abs{\boldsymbol{\xi}}_2^{m+1}p_{m+1}(\mathbf{x},\boldsymbol{\xi})&=\boldsymbol{\xi}^{\mathrm{T}}\mathrm{grad}_{\mathbf{x}}\left[ \sum\limits_{\abs{\mathbf{r}}_1=m}\binom{m}{\mathbf{r}}\boldsymbol{\xi}^{\mathbf{r}}\,\partial^{\mathbf{r}}p \right]\\ 			&=\sum\limits_{r=0}^m\binom{m}{r}\,\xi_1^{r+1}\,\xi_2^{m-r}\,\partial^{(r+1,0)}\,\partial^{(0,m-r)}p+\sum\limits_{r=0}^m\binom{m}{r}\,\xi_1^{r}\,\xi_2^{m-r+1}\,\partial^{(r,0)}\,\partial^{(0,m-r+1)}p\\
   	 		&=\sum\limits_{\abs{\mathbf{r}}_1=m+1}\binom{m+1}{\mathbf{r}}\,\boldsymbol{\xi}^{\mathbf{r}}\,\partial^{\mathbf{r}}p.
   	   	 \end{align*}
   	 We use polar coordinates to verify
   		\begin{equation*}
   			\abs{\boldsymbol{\xi}}_2^{-m}\,\boldsymbol{\xi}^{\mathbf{r}}=(\cos\theta)^{r_1}\,(\sin\theta)^{r_2}=(\boldsymbol{\Theta}(\theta))^{\mathbf{r}}
   		\end{equation*}
 		if $\abs{\mathbf{r}}_1=m$ and obtain with \cref{eq:m_te_richtungsableitung_allgemein}
   		\begin{equation}\label{fourier_transformation_gauss2}
			p_m(\mathbf{x},\rho\,\boldsymbol{\Theta}(\theta))=\sum\limits_{\abs{\mathbf{r}}_1=m}\binom{m}{\mathbf{r}}\,(\boldsymbol{\Theta}(\theta))^{\mathbf{r}}\,\partial^{\mathbf{r}}[p](\mathbf{x})=\partial_{\boldsymbol{\Theta}(\theta)}^m[p](\mathbf{x}).
   		\end{equation}
   		We finish the proof by using polar coordinates for the variable $\boldsymbol{\xi}$ in \cref{fourier_transformation_gauss} and inserting \cref{fourier_transformation_gauss2}.
    \end{proof}
 			 Let $\boldsymbol{\gamma}:[0,2\pi)\rightarrow\partial T$ be a curve from \cref{eq:star1}. For $M\in \mathbb{N}$ let $a_0<a_1<\hdots<a_M$ be a partition of the interval $[0,2\pi)$ such that for each $x\in[a_k,a_{k+1}), k=0,\hdots,M-1,$ the curve $\boldsymbol{\gamma}$ can either be represented as a horizontal curve $(x,f(x))^{\mathrm{T}}$ or a vertical curve $(f(x),x)^{\mathrm{T}}$. If $\mathfrak{i}=\mathfrak{h}$, then $(f(x),x)^{\mathrm{T}}$ with $\abs{f'(x)}\leq 1$ is a vertical curve and $(x,f(x))^{\mathrm{T}}$ with $\abs{f'(x)}<1$ is a horizontal curve. Otherwise, if $\mathfrak{i}=\mathfrak{v}$, then $(f(x),x)^{\mathrm{T}}$ with $\abs{f'(x)}<1$ is a vertical curve and $(x,f(x))^{\mathrm{T}}$ with $\abs{f'(x)}\leq 1$ is a horizontal curve.\\
				
 		With the parametrization of the curve $\boldsymbol{\gamma}$ we can write the line integral \cref{eq:T_fourier} as
 		\begin{align*}
			\mathcal{F}P_u\left(\rho\,\boldsymbol{\Theta}(\theta)\right)&=\sum\limits_{m=0}^{u}\frac{C_m}{\rho^{m+1}}\int\limits_{0}^{2\pi}\partial_{\boldsymbol{\Theta}(\theta)}^m[p](\mathbf{x})\,\mathrm{e}^{-\mathrm{i}\,\rho\,\boldsymbol{\Theta}^{\mathrm{T}}(\theta)\,\boldsymbol{\gamma}(x)}\,\boldsymbol{\Theta}^{\mathrm{T}}(\theta)\,\mathbf{n}(\boldsymbol{\gamma}(x))\abs{\boldsymbol{\gamma}'(x)}_2\mathrm{d}x\\	
&=\sum\limits_{m=0}^{u}\frac{C_m}{\rho^{m+1}}\sum_{k=0}^{M-1}\int\limits_{a_k}^{a_{k+1}}p_{\theta}^m(\boldsymbol{\gamma}(x))\,\mathrm{e}^{-\mathrm{i}\,\rho\,\boldsymbol{\Theta}^{\mathrm{T}}(\theta)\,\boldsymbol{\gamma}(x)}\,\boldsymbol{\Theta}^{\mathrm{T}}(\theta)\,\boldsymbol{\beta}(x)\,\mathrm{d}x,
 		\end{align*}
 where $\boldsymbol{\beta}(x)\mathrel{\mathop:}=\mathbf{n}(\boldsymbol{\gamma}(x))\abs{\boldsymbol{\gamma}'(x)}_2$ and $p_{\theta}^m(\mathbf{x})\mathrel{\mathop:}=\partial_{\boldsymbol{\Theta}(\theta)}^m[p](\mathbf{x})$. With the help of polar coordinates, we transform the following integral into
     		\begin{align}
     			&\mathcal{F}^{-1}\left[ \mathcal{F}[P_u]\Psi^{(\mathfrak{i})}_{j,\ell} \right](2\pi\mathbf{y})\notag\\
     		&\qquad=\sum_{m=0}^{u}C_m\int\limits_{0}^{\infty}\int\limits_{0}^{2\pi}\int\limits_{\partial T}\Psi_{j,\ell}^{(\mathfrak{i})}\left(\rho\,\boldsymbol{\Theta}(\theta)\right)p_{\theta}^m(\mathbf{x})\rho^{-m}\mathrm{e}^{\mathrm{i} \rho \boldsymbol{\Theta}^{\mathrm{T}}(\theta)(2\pi\mathbf{y}-\mathbf{x})}\boldsymbol{\Theta}^{\mathrm{T}}(\theta)\,\mathbf{n}(\mathbf{x})\mathrm{d}\sigma\,\mathrm{d}\theta\,\mathrm{d}\rho\notag\\\label{eq:F_inv}
     		&\qquad=\sum_{m=0}^{u}C_m\sum_{k=0}^{M-1}\,I_k^{(\mathfrak{i})}(j,\ell,\mathbf{y},m)
     		\end{align}
 			with
     \begin{equation*}
		I_k^{(\mathfrak{i})}(j,\ell,\mathbf{y},m)\mathrel{\mathop:}=\int\limits_{0}^{\infty}\int\limits_{0}^{2\pi}\int\limits_{a_k}^{a_{k+1}}\Psi_{j,\ell}^{(\mathfrak{i})}\left(\rho\,\boldsymbol{\Theta}(\theta)\right)p_{\theta}^m(\boldsymbol{\gamma}(x))\rho^{-m}\,\mathrm{e}^{\mathrm{i} \rho \boldsymbol{\Theta}^{\mathrm{T}}(\theta)(2\pi\mathbf{y}-\boldsymbol{\gamma}(x))}\,\boldsymbol{\Theta}^{\mathrm{T}}(\theta)\,\boldsymbol{\beta}(x)\,\mathrm{d}x\,\mathrm{d}\theta\,\mathrm{d}\rho.
     \end{equation*}
 	From the assumption of \cref{thm:hauptresultat2} it follows that there exists $\varepsilon>0$ such that $U_\varepsilon(\mathbf{y})=\partial T\cap B_\varepsilon(2\pi\mathbf{y})\neq\emptyset$. We choose $k^*=k^*(\mathbf{y})$ with $0\leq k^*\leq M-1$ such that for $x\in[a_{k^*},a_{k^*+1})$ the neighborhood
 \begin{equation*}
 	U_\varepsilon(\mathbf{y})=\partial T\cap B_\varepsilon(2\pi\mathbf{y})
 \end{equation*}
 from \cref{eq:U_epsilon} can be represented by the curve $\boldsymbol{\gamma}(x)$ (see \cref{fig:skizze_rand}). 
 	\begin{figure}[t]
 		\subfloat{
 		{\includegraphics[width=.49\textwidth]{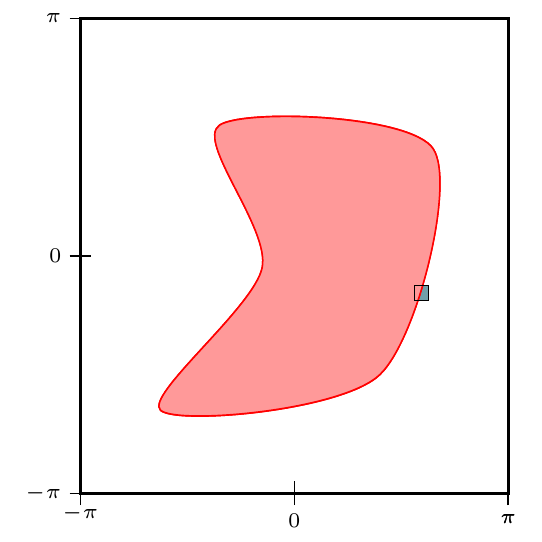}}
 		}\hspace{-0.5cm}
 		\subfloat{
 		{\includegraphics[width=.503\textwidth]{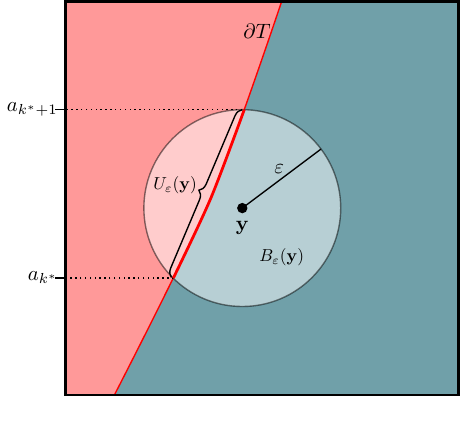}}
 		}
 	  	\caption[]{Left: Star-like set $T\in\mathrm{STAR}^2$ (red). Right: Zoom into the small window of the left picture to see the neighborhood $B_\varepsilon(\mathbf{y})$ around $\mathbf{y}\in \mathcal{P}(\mathbf{N}_{j,\ell}^{(\mathfrak{i})})$ and  $U_\varepsilon(\mathbf{y})$ on the boundary $\partial T$ with the interval $[a_{k^*},a_{k^*+1})$.}\label{fig:skizze_rand}
 	    \end{figure}
 The following lemma, called localization lemma, is important for the proof of \cref{thm:hauptresultat}. We adapt the main ideas of \cite[Lemma 4.1]{labate:detection_continuous} where a similar statement was shown for cone-adapted continuous shearlets.
        \begin{lemma}
        \label{lem:lokalisierungslemma}
     	For $\mathfrak{i}\in \left\lbrace \mathfrak{h},\mathfrak{v}\right\rbrace$ and $q\in \mathbb{N}$ let $\Psi^{(\mathfrak{i})}\in \mathcal{W}^{2q}$ be given. Then there exists a constant $C(m,q,p,\varepsilon_0)>0$ such that for all $k\neq k^*$ we have
   	\begin{equation*}
   		\bigl\lvert I_k^{(\mathfrak{i})}(j,\ell,\mathbf{y},m)\bigr\rvert\leq C(m,q,p,\varepsilon_0)\,2^{-j(q+m-1/2)}.
   	\end{equation*}
     \end{lemma}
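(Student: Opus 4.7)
The strategy is a non-stationary phase argument in the polar variables $(\rho,\theta)$, exploiting that for $k\neq k^*$ and every $x\in[a_k,a_{k+1})$, the choice of $k^*$ forces
\begin{equation*}
|\mathbf{z}(x)|_2:=|2\pi\mathbf{y}-\boldsymbol{\gamma}(x)|_2\geq\varepsilon_0\,2^{-j/2},
\end{equation*}
since otherwise $\boldsymbol{\gamma}(x)\in U_\varepsilon(\mathbf{y})\subset\boldsymbol{\gamma}([a_{k^*},a_{k^*+1}))$ would contradict $k\neq k^*$. By \cite[Lemma 1]{schober:detection}, the integrand of $I_k^{(\mathfrak{i})}$ is supported in $\rho\in(2^j/3,2^{j+1})$ and $\theta\in(\theta_{j,\ell-2}^{(\mathfrak{i})},\theta_{j,\ell+2}^{(\mathfrak{i})})$, a region of $(\rho,\theta)$-measure $\sim 2^{j/2}$ on which $\rho^{-m}\sim 2^{-jm}$.

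The plan is to integrate by parts $2q$ times against the oscillatory exponential using one of the identities
\begin{equation*}
\mathrm{e}^{\mathrm{i}\rho\boldsymbol{\Theta}^{\mathrm{T}}(\theta)\mathbf{z}}=\frac{1}{\mathrm{i}\boldsymbol{\Theta}^{\mathrm{T}}(\theta)\mathbf{z}}\,\partial_\rho\,\mathrm{e}^{\mathrm{i}\rho\boldsymbol{\Theta}^{\mathrm{T}}(\theta)\mathbf{z}}\qquad\text{or}\qquad \mathrm{e}^{\mathrm{i}\rho\boldsymbol{\Theta}^{\mathrm{T}}(\theta)\mathbf{z}}=\frac{1}{\mathrm{i}\rho\,\boldsymbol{\Theta}'^{\mathrm{T}}(\theta)\mathbf{z}}\,\partial_\theta\,\mathrm{e}^{\mathrm{i}\rho\boldsymbol{\Theta}^{\mathrm{T}}(\theta)\mathbf{z}},
\end{equation*}
dictated by the orthogonal decomposition $|\boldsymbol{\Theta}^{\mathrm{T}}\mathbf{z}|^2+|\boldsymbol{\Theta}'^{\mathrm{T}}\mathbf{z}|^2=|\mathbf{z}|^2$. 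Namely, I would use the first (radial) identity whenever $|\boldsymbol{\Theta}^{\mathrm{T}}(\theta_{j,\ell}^{(\mathfrak{i})})\mathbf{z}(x)|\geq|\mathbf{z}(x)|/\sqrt{2}$ and the second (angular) identity otherwise. In both cases boundary terms vanish since $\Psi_{j,\ell}^{(\mathfrak{i})}$ is compactly supported in $\rho$ and $\theta$. Repeated differentiation of the remaining factors $p_\theta^m(\boldsymbol{\gamma}(x))$, $\boldsymbol{\Theta}^{\mathrm{T}}(\theta)\boldsymbol{\beta}(x)$, $\rho^{-m}$, and $(\boldsymbol{\Theta}^{\mathrm{T}}\mathbf{z})^{-1}$ or $(\rho\,\boldsymbol{\Theta}'^{\mathrm{T}}\mathbf{z})^{-1}$ is bounded by Leibniz and chain rule together with the $C^2$-smoothness of $\boldsymbol{\gamma}$ and polynomiality of $p$.

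For the decisive factors I would invoke \cref{lem:partielle_ableitung_psi:polar} with the substitution $\tilde\rho=2^j\rho$: each $\rho$-derivative hitting $\Psi_{j,\ell}^{(\mathfrak{i})}$ produces a factor of $2^{-j}$, while each $\theta$-derivative produces a factor of $2^{j/2}$. Combined with $|\boldsymbol{\Theta}^{\mathrm{T}}\mathbf{z}|^{-1}\lesssim\sqrt{2}/|\mathbf{z}|$ in the radial case and $(\rho|\boldsymbol{\Theta}'^{\mathrm{T}}\mathbf{z}|)^{-1}\lesssim\sqrt{2}\cdot 2^{-j}/|\mathbf{z}|$ in the angular case, $2q$ iterations together with the lower bound $|\mathbf{z}|\geq\varepsilon_0 2^{-j/2}$ and the base estimates $\rho^{-m}\sim 2^{-jm}$ and area $\sim 2^{j/2}$ yield the uniform pointwise bound
\begin{equation*}
\left|\int_0^\infty\!\int_0^{2\pi}\Psi_{j,\ell}^{(\mathfrak{i})}(\rho\boldsymbol{\Theta}(\theta))\,p_\theta^m(\boldsymbol{\gamma}(x))\,\rho^{-m}\,\boldsymbol{\Theta}^{\mathrm{T}}(\theta)\,\boldsymbol{\beta}(x)\,\mathrm{e}^{\mathrm{i}\rho\boldsymbol{\Theta}^{\mathrm{T}}(\theta)\mathbf{z}(x)}\,\mathrm{d}\theta\,\mathrm{d}\rho\right|\lesssim C(m,q,p,\varepsilon_0)\,2^{-j(q+m-1/2)}.
\end{equation*}
Integrating this bound over the bounded interval $[a_k,a_{k+1})$ in $x$ gives the claim.

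The main obstacle is the bookkeeping in the angular case: there the $2^{j/2}$ growth of $\partial_\theta^r\Psi_{j,\ell}^{(\mathfrak{i})}$ is exactly counterbalanced by $2^{-j}$ from $\rho^{-1}$, leaving a net $2^{-j/2}$ per IBP which, in the worst regime $|\mathbf{z}|\sim\varepsilon_0 2^{-j/2}$, is precisely cancelled by the blowup $|\mathbf{z}|^{-2q}\sim 2^{jq}$. The resulting $2q$ factors of order one must then be collected together with the volume $2^{j/2}$, the factor $2^{-jm}$, and an overall $2^{-jq}$ gain, and this is where the full regularity $\Psi^{(\mathfrak{i})}\in\mathcal{W}^{2q}$ is essential; the constant $C(m,q,p,\varepsilon_0)$ depends polynomially on $\varepsilon_0^{-1}$ together with the Bell-number bounds from \cref{lem:partielle_ableitung_psi:polar} and the $C^{u+1}$-norms of $p$ along $\partial T$.
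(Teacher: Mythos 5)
There is a genuine gap in the angular case, and you have in fact located it yourself without closing it. Write $\mathbf{z}\mathrel{\mathop:}=2\pi\mathbf{y}-\boldsymbol{\gamma}(x)$. With only the information $\abs{\mathbf{z}}_2\geq\varepsilon_0\,2^{-j/2}$, consider the regime $\abs{\mathbf{z}}_2\sim\varepsilon_0\,2^{-j/2}$ with $\mathbf{z}$ nearly orthogonal to $\boldsymbol{\Theta}(\theta_{j,\ell}^{(\mathfrak{i})})$, which your $1/\sqrt{2}$ criterion sends to the angular branch. Your own bookkeeping shows that each integration by parts in $\theta$ then produces a net factor of size $2^{j/2}\cdot 2^{-j}/\abs{\mathbf{z}}_2\sim\varepsilon_0^{-1}$, i.e.\ order one: after $2q$ steps you are back at the trivial bound $C\,2^{-j(m-1/2)}$ (integrand $\lesssim\rho^{-m}$ over a support of measure $\sim 2^{j/2}$), and the ``overall $2^{-jq}$ gain'' you then invoke has no source in your scheme. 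This is not a bookkeeping nuisance but the crux of the lemma: when the phase gradient is almost purely angular and $\abs{\mathbf{z}}_2\sim2^{-j/2}$, the phase $2^j\rho\,\boldsymbol{\Theta}^{\mathrm{T}}(\theta)\mathbf{z}$ oscillates in $\theta$ at exactly the scale $2^{-j/2}$ on which the amplitude $\Psi_{j,\ell}^{(\mathfrak{i})}\left(2^j\rho\,\boldsymbol{\Theta}(\theta)\right)$ itself varies (\cref{lem:partielle_ableitung_psi:polar}), so non-stationary phase in $\theta$ is powerless there.

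The paper's proof avoids this configuration by a geometric splitting rather than a pointwise $1/\sqrt{2}$ threshold: it sets $M_1=\bigl\lbrace\theta:\abs{\boldsymbol{\Theta}^{\mathrm{T}}(\theta)\mathbf{z}}\geq K\abs{\mathbf{z}}_2\bigr\rbrace$ with $K=K(\varepsilon_0)$ chosen so that $M_2=\emptyset$ whenever $\abs{\mathbf{z}}_2<c(\varepsilon_0)$, where $c(\varepsilon_0)$ is a constant independent of $j$. Hence the $\theta$-integration by parts is only ever applied when $\abs{(2\pi\mathbf{y}-\boldsymbol{\gamma}(x))^{\mathrm{T}}\boldsymbol{\Theta}'(\theta)}\geq c(\varepsilon_0)$ is of order one, cf.\ \cref{eq:lokalisierungslemma2}; there each step genuinely gains $2^{j/2}\cdot\bigl(2^j\,c(\varepsilon_0)\bigr)^{-1}\sim2^{-j/2}$, and $2q$ steps together with $\abs{M_2}\lesssim2^{-j/2}$ yield $2^{-j(q+m-1/2)}$. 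The critical small-distance regime is handled entirely by the radial integration by parts, because for such $x$ the whole angular window lies in $M_1$. Your radial branch coincides, up to constants, with the paper's estimate of $I_{k,1}$; but to repair the argument you need precisely this additional geometric input (or an equivalent statement) excluding the angular regime at distances $\sim2^{-j/2}$. As written, the proof does not go through.
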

     \begin{proof}
   	 We provide the proof only for $\mathfrak{i}=\mathfrak{h}$ and use the notation $I_k\mathrel{\mathop:}=I_k^{(\mathfrak{h})}(j,\ell,\mathbf{y},m)$. From \cite[Lemma 1]{schober:detection} we have
   	  \begin{equation*}
   	  	\mathrm{supp}\,\Psi_{j,\ell}^{(\mathfrak{h})}\left( 2^j\rho\,\boldsymbol{\Theta}(\theta) \right)\subset\left\lbrace(\rho,\theta)\in \mathbb{R}\times\left[-\frac{\pi}{2},\frac{\pi}{2}\right]:\frac{1}{3}<\abs{\rho}< 2,\,\theta_{j,\ell-2}^{(\mathfrak{h})}<\theta<\theta_{j,\ell+2}^{(\mathfrak{h})}\right\rbrace
   	  \end{equation*} 
   	  and the substitution $\rho=2^j\,\rho'$ leads to
   	  \begin{equation*}
   	  	I_k=2^{-j(m-1)}\int\limits_{\frac{1}{3}}^{2} \int\limits_{\theta_{j,\ell-2}^{(\mathfrak{h})}}^{\theta_{j,\ell+2}^{(\mathfrak{h})}}\int\limits_{a_k}^{a_{k+1}}\Psi_{j,\ell}^{(\mathfrak{h})}\left( 2^j\rho\,\boldsymbol{\Theta}(\theta) \right)\,p_{\theta}^m(\boldsymbol{\gamma}(x))\,\rho^{-m}\,\mathrm{e}^{\mathrm{i}2^j\rho\boldsymbol{\Theta}^{\mathrm{T}}(\theta)(2\pi \mathbf{y}-\boldsymbol{\gamma}(x))}\boldsymbol{\Theta}^{\mathrm{T}}(\theta)\boldsymbol{\beta}(x)\,\mathrm{d}x\,\mathrm{d}\theta\, \mathrm{d}\rho.
   	  \end{equation*}
 	 We consider the sets
   	  \begin{equation*}
   	  	M_1\mathrel{\mathop:}=M_1(K)\mathrel{\mathop:}=\left\lbrace\theta\in \left(\theta_{j,\ell-2}^{(\mathfrak{h})},\theta_{j,\ell+2}^{(\mathfrak{h})}\right):\frac{\bigl\lvert\boldsymbol{\Theta}^{\mathrm{T}}(\theta)(2\pi\mathbf{y}-\boldsymbol{\gamma}(x))\bigr\rvert}{\abs{2\pi \mathbf{y}-\boldsymbol{\gamma}(x)}_2}\geq K\right\rbrace
 		\end{equation*}
 		and 
 		\begin{equation*}
   		 M_2\mathrel{\mathop:}=\left(\theta_{j,\ell-2}^{(\mathfrak{h})},\theta_{j,\ell+2}^{(\mathfrak{h})}\right)\setminus M_1,
     	  \end{equation*}
 		where $K=K(\varepsilon_0)>0$ is chosen such that $M_1=\left(\theta_{j,\ell-2}^{(\mathfrak{h})},\theta_{j,\ell+2}^{(\mathfrak{h})}\right)$ for all $x\in[a_k,a_{k+1}]$ with $\abs{2\pi \mathbf{y}-\boldsymbol{\gamma}(x)}_2<c(\varepsilon_0)$. \\
  	  
 	  We can use these sets to split the integral into $I_k=I_{k,1}+I_{k,2}$, where
   	  \begin{align*}
   	  	I_{k,i}&\mathrel{\mathop:}=2^{-j(m-1)}\int\limits_{\frac{1}{3}}^{2} \int\limits_{M_i}\int\limits_{a_k}^{a_{k+1}}\Psi_{j,\ell}^{(\mathfrak{h})}\left( 2^j\rho\,\boldsymbol{\Theta}(\theta) \right)\,p_{\theta}^m(\boldsymbol{\gamma}(x))\,\rho^{-m}\,\mathrm{e}^{\mathrm{i}2^j\rho\boldsymbol{\Theta}^{\mathrm{T}}(\theta)(2\pi \mathbf{y}-\boldsymbol{\gamma}(x))}\\
 		&\qquad\times\boldsymbol{\Theta}^{\mathrm{T}}(\theta)\,\boldsymbol{\beta}(x)\,\mathrm{d}x\,\mathrm{d}\theta\, \mathrm{d}\rho
   	  \end{align*}
   	 for $i\in \left\lbrace 1,2 \right\rbrace$ and investigate these integrals separately.
 	 \begin{itemize}
		 \item[i)] By Fubini's theorem, we can change the order of integration in $I_{k,1}$ to obtain
 	  \begin{equation*}
 	  		I_{k,1}=2^{-j(m-1)} \int\limits_{M_1}\int\limits_{a_k}^{a_{k+1}}J(x,\theta)\,p_{\theta}^m(\boldsymbol{\gamma}(x))\,\boldsymbol{\Theta}^{\mathrm{T}}(\theta)\,\boldsymbol{\beta}(x)\,\mathrm{d}x\,\mathrm{d}\theta
 	  \end{equation*} 
 	 with
   	  \begin{equation*} 	
 		  J(x,\theta)\mathrel{\mathop:}=\int\limits_{\frac{1}{3}}^{2}\Psi_{j,\ell}^{(\mathfrak{h})}\left( 2^j\rho\,\boldsymbol{\Theta}(\theta) \right)\,\rho^{-m}\,\mathrm{e}^{\mathrm{i}2^j\rho\boldsymbol{\Theta}^{\mathrm{T}}(\theta)(2\pi \mathbf{y}-\boldsymbol{\gamma}(x))}\,\mathrm{d}\rho,\quad x\in[a_k,a_{k+1}),\quad\theta\in M_1.
   	  \end{equation*}
   	  For $k=0,\hdots,M-1$ with $k\neq k^*$ and $x\in[a_k,a_{k+1})$ we have $\boldsymbol{\gamma}(x)\in U_{\varepsilon}^{\mathrm{c}}(\mathbf{y})$ or equivalently 
	  \begin{equation}\label{eq:lokalisierungslemma0}
	  	\abs{2\pi \mathbf{y}-\boldsymbol{\gamma}(x)}_2\geq\varepsilon=\varepsilon_0\,2^{-j/2}.
	  \end{equation}
 	 With
 	  \begin{equation*}
 	  	\abs{\frac{\partial^s}{\partial\rho^s}\left[\rho^{-m}\right]}=\frac{(m+s)!}{(m-1)!}\abs{\rho}^{-(m+s)},
 	  \end{equation*}
 	  the Leibniz rule and \cref{lem:partielle_ableitung_psi:polar} we obtain
 	  \begin{align}
 	\abs{\frac{\partial^{2q}}{\partial\rho^{2q}}\left[\Psi_{j,\ell}^{(\mathfrak{h})}\left( 2^j\rho\,\boldsymbol{\Theta}(\theta) \right)\,\rho^{-m}\right]}&\leq\sum_{s=0}^{2q}\binom{2q}{s}\abs{\frac{\partial^s}{\partial\rho^s}\left[\Psi_{j,\ell}^{(\mathfrak{h})}\left( 2^j\rho\,\boldsymbol{\Theta}(\theta) \right)\right]}\abs{\frac{\partial^{2q-s}}{\partial\rho^{2q-s}}\left[\rho^{-m}\right]}\notag\\
 	\label{eq:lokalisierungslemma1}
 	&\leq C_2(q,m).
 	  \end{align}
	  Since $\theta\in M_1$, we can use \cref{eq:lokalisierungslemma0} and \cref{eq:lokalisierungslemma1} for the integral $J(x,\theta)$ and $2q$-times integration by parts with respect to the variable $\rho$ to obtain
     	  \begin{align*}
     	\bigl\lvert J(x,\theta)\bigr\rvert&\leq\Bigl\lvert2^j\,\boldsymbol{\Theta}^{\mathrm{T}}(\theta)(2\pi \mathbf{y}-\boldsymbol{\gamma}(x))\Bigr\rvert^{-2q}\int\limits_{\frac{1}{3}}^{2}\abs{\frac{\partial^{2q}}{\partial\rho^{2q}}\left[\Psi_{j,\ell}^{(\mathfrak{h})}\left( 2^j\rho\,\boldsymbol{\Theta}(\theta) \right)\,\rho^{-m}\right]}\,\mathrm{d}\rho\\
		&\leq C_3(q,m,\varepsilon_0)\,2^{-jq}.
     	  \end{align*}
   	  The estimate \cref{proof:int_FT_Q1_2} implies $\abs{M_1}\leq C\,2^{-j/2}$ and we can bound the integral $I_{k,1}$ from above by
   	  \begin{align*}
   	  	\bigl\lvert I_{k,1}\bigr\rvert&\leq2^{-j(m-1)} \int\limits_{M_1}\int\limits_{a_k}^{a_{k+1}}\bigl\lvert J(x,\theta)\bigr\rvert\abs{p_{\theta}^m(\boldsymbol{\gamma}(x))\,\boldsymbol{\Theta}^{\mathrm{T}}(\theta)\,\boldsymbol{\beta}(x)}\,\mathrm{d}x\,\mathrm{d}\theta\\
		&\leq C_4(q,m,p,\varepsilon_0)\,2^{-j(q+m-1/2)}.
   	  \end{align*}
	  \item[ii)] For the integral $I_{k,2}$ we follow a similar approach, but this time with respect to the variable $\theta$. For $x\in[a_k,a_{k+1}]$ with $\abs{2\pi \mathbf{y}-\boldsymbol{\gamma}(x)}_2<c(\varepsilon_0)$, by the choice of $K$ in $M_1$, we have that $M_2=\emptyset$ and thus $I_{k,2}=0$. If on the other hand $M_2\neq\emptyset$, we have $\boldsymbol{\Theta}^{\mathrm{T}}(\theta)\boldsymbol{\Theta}'(\theta)=0$ for $\theta\in M_2$ and the inequality
	    	  \begin{equation}\label{eq:lokalisierungslemma2}
	    	  	\Bigl\lvert(2\pi\mathbf{y}-\boldsymbol{\gamma}(x))^{\mathrm{T}}\boldsymbol{\Theta}'(\theta)\Bigr\rvert\geq c(\varepsilon_0)
	    	  \end{equation}
	  	  is fulfilled. We can write the integral $I_{k,2}$ as
 	 \begin{equation*}
 	 	I_{k,2}=2^{-j(m-1)} \int\limits_{\frac{1}{3}}^{2}\int\limits_{a_k}^{a_{k+1}}K(x,\rho)\,\rho^{-m}\,\mathrm{d}x\,\mathrm{d}\rho,
 	 \end{equation*}
 	 where
   	 \begin{equation*} 	K(x,\rho)\mathrel{\mathop:}=\int\limits_{M_2}\Psi_{j,\ell}^{(\mathfrak{h})}\left( 2^j\rho\,\boldsymbol{\Theta}(\theta) \right)\,p_{\theta}^m(\boldsymbol{\gamma}(x))\,\mathrm{e}^{\mathrm{i}2^j\rho\boldsymbol{\Theta}^{\mathrm{T}}(\theta)(2\pi \mathbf{y}-\boldsymbol{\gamma}(x))}\,\boldsymbol{\Theta}^{\mathrm{T}}(\theta)\,\boldsymbol{\beta}(x)\,\mathrm{d}\theta.
   	 \end{equation*}
 	 The curve \cref{eq:star1} is of the form $\boldsymbol{\gamma}(x)=\mathbf{x}_0+r(x)(\cos x,\,\sin x)^{\mathrm{T}}$. Thus, we have
 	\begin{align*}
 	 	\boldsymbol{\beta}(x)&=\mathbf{n}(\boldsymbol{\gamma}(x))\abs{\boldsymbol{\gamma}'(x)}_2\\
		&=\left( r(x)\cos x+r'(x)\sin x,\,r(x)\sin x-r'(x)\cos x\right)^{\mathrm{T}}\sqrt{r(x)^2+r'(x)^2}.
 	 \end{align*}
 	 From this equality we obtain
 	 \begin{equation*}
 	 	\boldsymbol{\Theta}^{\mathrm{T}}(\theta)\,\boldsymbol{\beta}(x)=\left( r(x)\cos\left( \theta-x \right)+r'(x)\sin\left( \theta-x \right) \right)\sqrt{r(x)^2+r'(x)^2}
 	 \end{equation*}
 	leading to
 	 \begin{equation*}
 	 	\abs{\frac{\partial^s}{\partial\theta^s}\left[ \boldsymbol{\Theta}^{\mathrm{T}}(\theta)\,\boldsymbol{\beta}(x) \right]}\leq C_5.
 	 \end{equation*}
 	 With the same ideas which led to \cref{eq:lokalisierungslemma1} we can estimate
 	 \begin{align}
 	 	\abs{\frac{\partial^{2q}}{\partial\theta^{2q}}\left[\Psi_{j,\ell}^{(\mathfrak{h})}\left( 2^j\rho\boldsymbol{\Theta}(\theta) \right)\boldsymbol{\Theta}^{\mathrm{T}}(\theta)\boldsymbol{\beta}(x)\right]}&\leq\sum_{s=0}^{2q}\binom{2q}{s}C_6(s,m,p)\,2^{js/2}\notag\\\label{eq:lokalisierungslemma3}
 		&\leq C_7(q,m,p)\,2^{jq}.
 	 \end{align}
	 Finally, we perform $2q$-times integration by parts with respect to the variable $\theta$ and use \cref{eq:lokalisierungslemma2} and \cref{eq:lokalisierungslemma3} to obtain the estimate
	   	 \begin{align*}
	 &\bigl\lvert K(x,\rho)\bigr\rvert \\
	 &\qquad\leq \int\limits_{M_2}\Bigl\lvert2^j\rho\,(2\pi\mathbf{y}-\boldsymbol{\gamma}(x))^{\mathrm{T}}\boldsymbol{\Theta}'(\theta)\Bigl\lvert ^{-2q}\abs{\frac{\partial^{2q}}{\partial\theta^{2q}}\left[\Psi_{j,\ell}^{(\mathfrak{h})}\left( 2^j\rho\boldsymbol{\Theta}(\theta) \right)p_{\theta}^m(\boldsymbol{\gamma}(x))\boldsymbol{\Theta}^{\mathrm{T}}(\theta)\boldsymbol{\beta}(x)\right]}\mathrm{d}\theta\\
	 &\qquad\leq C_8(q,m,p,\varepsilon_0)\,2^{-j(q+1/2)}.
	   	 \end{align*}
   	 Similar to the estimate for $I_{k,2}$, we get
  	 \begin{equation*}
  	 	\bigl\lvert I_{k,2}\bigr\rvert\leq2^{-j(m-1)}\int\limits_{\frac{1}{3}}^{2} \int\limits_{a_k}^{a_{k+1}}\bigl\lvert K(x,\rho)\bigr\rvert \abs{\rho^{-m}}\,\mathrm{d}x\,\mathrm{d}\rho\leq C_9(q,m,p,\varepsilon_0)\,2^{-j(q+m-1/2)}
  	 \end{equation*}
   	 and the proof is finished.
	 \end{itemize}
	 \vspace{-0.7cm}
     \end{proof}
	The set $\mathcal{M}^{(\mathfrak{h})}\subset\lbrace 0,\hdots,M-1\rbrace$ contains all indices such that for $x\in[a_k,a_{k+1})$ with $k\in\mathcal{M}^{(\mathfrak{h})}$ the curve $\boldsymbol{\gamma}(x)$ is horizontal and $\mathcal{M}^{(\mathfrak{v})}\subset\lbrace 0,\hdots,M-1\rbrace$ includes all indices such that for $x\in[a_k,a_{k+1})$ with $k\in\mathcal{M}^{(\mathfrak{v})}$ the curve $\boldsymbol{\gamma}(x)$ is vertical. Obviously, we have $\mathcal{M}^{(\mathfrak{h})}\cup\mathcal{M}^{(\mathfrak{v})}=\lbrace 1,\hdots,M\rbrace$. A similar version of the following lemma was proven in \cite[Lemma 5.10]{schober:detection}, based on the ideas from \cite[Section 3.1]{labate:detection}. We omit the proof here since it requires only a slight adjustment of the proof of \cite[Lemma 5.10]{schober:detection}.
     \begin{lemma}\label{lem:orientation_lemma} 
     	For $\mathfrak{i}\in\lbrace\mathfrak{h},\mathfrak{v}\rbrace$ and $q\in \mathbb{N}$ let $\Psi^{(\mathfrak{i})}\in \mathcal{W}^q$ be a window function. Then for any $N\in \mathbb{N}$ there exists a constant $C(m,N,p)>0$ such that for all $k\in \mathcal{M}^{(\mathfrak{i})}$ we have
     	\begin{equation*}
 			\bigl\lvert I_k^{(\mathfrak{i})}(j,\ell,\mathbf{y},m)\bigr\rvert\leq C(m,N,p)\,2^{-j(N+m-1/2)}.
     	\end{equation*}
     \end{lemma}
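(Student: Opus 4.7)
The plan is to follow the template of \cref{lem:lokalisierungslemma}, but to perform the integration by parts in the curve parameter $x$ rather than in the spectral variables $\rho$ or $\theta$, exploiting the misalignment between the tangent direction of the ``horizontally oriented'' piece of $\boldsymbol{\gamma}$ and the angular support of the shearlet. I would treat only $\mathfrak{i}=\mathfrak{h}$, since the case $\mathfrak{i}=\mathfrak{v}$ follows by the obvious symmetry between the two frequency cones. After the substitution $\rho=2^j\rho'$ inside $I_k^{(\mathfrak{h})}(j,\ell,\mathbf{y},m)$, one obtains a prefactor $2^{-j(m-1)}$ and the bounded integration domain $(\tfrac13,2)\times(\theta_{j,\ell-2}^{(\mathfrak{h})},\theta_{j,\ell+2}^{(\mathfrak{h})})\times[a_k,a_{k+1}]$ of $\theta$-measure $O(2^{-j/2})$ by \cite[Lemma~1]{schober:detection}.

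The crucial ingredient is a uniform lower bound on the $x$-derivative of the phase $\phi(x)\mathrel{\mathop:}=2^j\rho\,\boldsymbol{\Theta}^{\mathrm T}(\theta)(2\pi\mathbf{y}-\boldsymbol{\gamma}(x))$. For $k\in\mathcal{M}^{(\mathfrak{h})}$ the piece is parametrized as $\boldsymbol{\gamma}(x)=(x,f(x))^{\mathrm T}$ with $|f'(x)|<1$, hence by continuity there exists $\delta>0$ with $|f'(x)|\le 1-\delta$ on the compact interval $[a_k,a_{k+1}]$. Since $|\ell|<2^{j/2}$, the angular support satisfies $\theta_{j,\ell\pm 2}^{(\mathfrak{h})}\in(-\pi/4-C\,2^{-j/2},\pi/4+C\,2^{-j/2})$, so for $j$ sufficiently large a direct computation gives
\begin{equation*}
\bigl\lvert\boldsymbol{\Theta}^{\mathrm T}(\theta)\boldsymbol{\gamma}'(x)\bigr\rvert=\bigl\lvert\cos\theta+f'(x)\sin\theta\bigr\rvert\ge c_0(\delta)>0
\end{equation*}
uniformly in $(x,\theta)$ on the support of the integrand. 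This yields $|\phi'(x)|\ge c_0\,2^j\rho$, i.e.\ non-stationarity of order $2^j$ in $x$.

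I would then integrate by parts $N$ times in $x$ against $e^{\mathrm i\phi(x)}$, each step dividing by $\mathrm i\phi'(x)$ and thereby gaining a factor $2^{-j}$, while differentiating the amplitude $p_\theta^m(\boldsymbol{\gamma}(x))\,\rho^{-m}\,\Psi^{(\mathfrak{h})}_{j,\ell}(2^j\rho\boldsymbol{\Theta}(\theta))\,\boldsymbol{\Theta}^{\mathrm T}(\theta)\,\boldsymbol{\beta}(x)\,e^{\mathrm i 2^j\rho\boldsymbol{\Theta}^{\mathrm T}(\theta)\cdot 2\pi\mathbf{y}}$, whose $x$-derivatives of all orders are uniformly bounded since $\Psi^{(\mathfrak{h})}_{j,\ell}(2^j\rho\boldsymbol{\Theta}(\theta))$ and $\rho^{-m}$ are independent of $x$, and $p=p_u$, $\boldsymbol{\gamma}$, $\boldsymbol{\beta}$ are smooth on $[a_k,a_{k+1}]$. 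The boundary terms arising at $a_k$ and $a_{k+1}$ are absorbed by introducing, as in the proof of \cite[Lemma~5.10]{schober:detection}, a smooth partition of unity on $[0,2\pi)$ adapted to the decomposition $a_0<\cdots<a_M$; this is legitimate because $\boldsymbol{\gamma}\in C^2$ is globally smooth across the transition points, so the integrand extends smoothly to a neighbourhood of each endpoint and the boundary contributions vanish.

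Collecting the gains, $N$ integrations by parts produce a factor $2^{-jN}$; combined with the $\theta$-measure $O(2^{-j/2})$, the bounded $\rho$- and $x$-ranges, and the prefactor $2^{-j(m-1)}$, this yields the bound $C(m,N,p)\,2^{-j(N+m-1/2)}$ asserted in the lemma. The main obstacle I expect is ensuring that the uniform lower bound on $|\cos\theta+f'(x)\sin\theta|$ remains valid even when $\theta_{j,\ell\pm 2}^{(\mathfrak{h})}$ slightly exceeds $\pm\pi/4$; this is precisely where the strict inequality $|f'(x)|<1$, yielding the margin $\delta>0$ on the closed piece, is used to absorb the $O(2^{-j/2})$ excess for $j\ge j_0$.
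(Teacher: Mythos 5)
Your overall strategy---integration by parts in the curve parameter $x$, using that on a horizontal piece the tangent $\boldsymbol{\gamma}'(x)$ is not orthogonal to the directions $\boldsymbol{\Theta}(\theta)$ occurring in the horizontal cone---is indeed the idea behind the proof this paper points to (the paper omits the argument and defers to \cite[Lemma 5.10]{schober:detection}, which follows \cite[Section 3.1]{labate:detection}). However, your execution has a genuine gap at exactly the point you flag as the ``main obstacle''. The uniform margin $\abs{f'(x)}\le 1-\delta$ on the closed interval does not follow from the definitions: the horizontal representation with $\abs{f'(x)}<1$ is only required on the half-open interval $[a_k,a_{k+1})$, and at a junction with an adjacent vertical piece the slope tends to $\pm 1$ (this is the generic situation, e.g.\ for the circle used in the paper's own numerical example, where the horizontal arcs end precisely where the tangent slope is $\pm1$). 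Hence $\sup_{[a_k,a_{k+1})}\abs{f'}$ may equal $1$, no $\delta>0$ exists, and your lower bound $\abs{\cos\theta+f'(x)\sin\theta}\ge c_0(\delta)$ fails. Worse, since for $\abs{\ell}\le 2^{j/2}-1$ the angular support reaches up to $\arctan(1+2^{-j/2})>\pi/4$, the equation $f'(x)=-\cot\theta$ can have solutions inside the piece, i.e.\ the phase is genuinely stationary in $x$ there, and no number of integrations by parts in $x$ alone can produce the factor $2^{-jN}$. Closing this requires an additional argument near the transition points and near-extremal shears (a separate treatment of an $O(2^{-j/2})$-neighbourhood of the endpoint, or a finer case distinction in $\ell$), which is precisely the content your continuity/compactness shortcut does not supply.

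Two further steps would need repair even where the phase is non-stationary. First, $N$-fold integration by parts in $x$ places up to $N$ derivatives on the amplitude $p_{\theta}^m(\boldsymbol{\gamma}(x))\,\boldsymbol{\Theta}^{\mathrm T}(\theta)\,\boldsymbol{\beta}(x)$, but with $r\in C^2$ in \cref{eq:star1} one only has $\boldsymbol{\beta}\in C^1$, so arbitrary $N$ is not available from the stated hypotheses; note that \cref{lem:lokalisierungslemma} deliberately integrates by parts only in $\rho$ and $\theta$, never in $x$, avoiding this issue. Second, the boundary terms cannot simply be ``absorbed by a partition of unity'': the quantities $I_k^{(\mathfrak{i})}$ are defined with sharp cutoffs at $a_k,a_{k+1}$, so replacing them by smoothly cut-off versions proves a bound for different objects, and any smooth cutoff overlapping $a_{k+1}$ leaks into the adjacent vertical piece, where the non-degeneracy of the $x$-phase fails outright; with the sharp cutoff, a boundary term gains only a single factor $2^{-j}$, not $2^{-jN}$.
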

	In the case of continuous shearlets, the following lemma was already established in \cite[Lemma 4.3]{labate:smooth}.
 	  \begin{lemma}\label{lem:P_L}
 		  Let $\mathfrak{f}=f\,\chi_T\in \mathcal{E}^{u+1}(\tau)$ and $T_uf(\mathbf{x};\,2\pi\mathbf{y})$ be the bivariate Taylor approximation of $f$ with order $u$ around the point $2\pi\mathbf{y}$ and let $P_{u,f,\mathbf{y}}(\mathbf{x})\mathrel{\mathop:}=T_uf(\mathbf{x};\,2\pi\mathbf{y})\,\chi_T(\mathbf{x})$. Moreover, for $\mathfrak{i}\in\lbrace\mathfrak{h},\mathfrak{v}\rbrace$ and $2q\geq u\in \mathbb{N}$ let $\Psi^{(\mathfrak{i})}\in \mathcal{W}^{2q}$ be a window function. Then there is a constant $C(\mathfrak{f},q)>0$ such that
 	  	\begin{equation*}
 	  		\abs{\left\langle \mathfrak{f}^{2\pi}-P_{u,f,\mathbf{y}}^{2\pi},\psi_{j,\ell,\mathbf{y}}^{(\mathfrak{i})} \right\rangle_2}\leq C(\mathfrak{f},q)\,2^{-j(u-1)/4}.
 	  	\end{equation*}
 	  \end{lemma}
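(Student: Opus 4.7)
The starting observation is Taylor's theorem: since $f\in C_0^{u+1}(\mathbb{R}^2)$, the remainder $R(\mathbf{x})\mathrel{\mathop:}= f(\mathbf{x})-T_uf(\mathbf{x};\,2\pi\mathbf{y})$ satisfies the uniform pointwise bound $\abs{R(\mathbf{x})}\leq C_{\mathfrak{f}}\,\abs{\mathbf{x}-2\pi\mathbf{y}}_2^{u+1}$, with $C_{\mathfrak{f}}$ depending only on $\norm{f}_{C_0^{u+1}}$. Since both $\mathfrak{f}=f\chi_T$ and $P_{u,f,\mathbf{y}}=T_uf(\cdot;\,2\pi\mathbf{y})\chi_T$ are supported in $T\subset(-\pi,\pi)^2$, their $2\pi$-periodizations agree with the original functions on $\mathbb{T}^2$, so
\begin{equation*}
\left\langle \mathfrak{f}^{2\pi}-P_{u,f,\mathbf{y}}^{2\pi},\psi_{j,\ell,\mathbf{y}}^{(\mathfrak{i})}\right\rangle_2 = \frac{1}{2\pi}\int_{\mathbb{T}^2}R(\mathbf{x})\,\chi_T(\mathbf{x})\,\overline{\psi_{j,\ell,\mathbf{y}}^{(\mathfrak{i})}(\mathbf{x})}\,\mathrm{d}\mathbf{x}.
\end{equation*}

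The plan is to evaluate this integral in the same spirit as the proof of \cref{thm:hauptresultat}. Expanding $\psi_{j,\ell,\mathbf{y}}^{(\mathfrak{i})}$ via its definition and applying Parseval followed by Poisson summation to $\mathcal{F}[R\chi_T]\,\Psi_{j,\ell}^{(\mathfrak{i})}$ (which lies in $C_0^{2q}(\mathbb{R}^2)$ since $R\chi_T$ is compactly supported and $\Psi_{j,\ell}^{(\mathfrak{i})}\in\mathcal{W}^{2q}$) rewrites the inner product as $2^{-3j/4}\sum_{\mathbf{n}\in\mathbb{Z}^2}S(\mathbf{n})$, where by Fubini
\begin{equation*}
S(\mathbf{n})\mathrel{\mathop:}=\mathcal{F}^{-1}\!\left[\mathcal{F}[R\chi_T]\,\Psi_{j,\ell}^{(\mathfrak{i})}\right]\!\bigl(2\pi(\mathbf{y}+\mathbf{n})\bigr)=\int_{\mathbb{R}^2}R(\mathbf{x})\,\chi_T(\mathbf{x})\,K_\mathbf{n}(\mathbf{x})\,\mathrm{d}\mathbf{x}
\end{equation*}
with $K_\mathbf{n}(\mathbf{x})\mathrel{\mathop:}=(2\pi)^{-2}\int\Psi_{j,\ell}^{(\mathfrak{i})}(\boldsymbol{\xi})\,\mathrm{e}^{\mathrm{i}\boldsymbol{\xi}^{\mathrm{T}}(2\pi(\mathbf{y}+\mathbf{n})-\mathbf{x})}\,\mathrm{d}\boldsymbol{\xi}$. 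Repeated integration by parts using $L=I+2^j\Delta_{\boldsymbol{\xi}}$, combined with the change of variables $\boldsymbol{\xi}=(\mathbf{N}_{j,\ell}^{(\mathfrak{i})})^{\mathrm{T}}\boldsymbol{\eta}$ (contributing $\abs{\det\mathbf{N}_{j,\ell}^{(\mathfrak{i})}}=2^{3j/2}$) and the isotropic lower bound $\abs{\mathbf{N}_{j,\ell}^{(\mathfrak{i})}\mathbf{v}}_2^2\geq c\,2^j\abs{\mathbf{v}}_2^2$, produces the pointwise estimate
\begin{equation*}
\abs{K_\mathbf{n}(\mathbf{x})}\leq C(q)\,\frac{2^{3j/2}}{\bigl(1+2^j\abs{\mathbf{x}-2\pi(\mathbf{y}+\mathbf{n})}_2^2\bigr)^q}.
\end{equation*}

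Combining the Taylor estimate for $R$ with this kernel bound, the dominant term $\mathbf{n}=\mathbf{0}$ is handled by the substitution $\mathbf{x}-2\pi\mathbf{y}=2^{-j/2}\mathbf{t}$, which yields $\abs{S(\mathbf{0})}\leq C(\mathfrak{f},q)\,2^{-ju/2}$ as long as $q$ is taken large enough for $\int\abs{\mathbf{t}}_2^{u+1}(1+\abs{\mathbf{t}}_2^2)^{-q}\,\mathrm{d}\mathbf{t}$ to converge---always possible since the admissible function underlying $\Psi^{(\mathfrak{i})}$ may be chosen arbitrarily smooth. For the tail $\mathbf{n}\neq\mathbf{0}$, the kernel bound gives $\abs{K_\mathbf{n}(\mathbf{x})}\leq C\,2^{3j/2}(2^j\abs{\mathbf{n}}^2)^{-q}$ on the bounded set $T$, so the sum over $\mathbf{n}\neq\mathbf{0}$ is dominated by $\abs{S(\mathbf{0})}$. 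Multiplying by the $2^{-3j/4}$ prefactor produces
\begin{equation*}
\abs{\left\langle \mathfrak{f}^{2\pi}-P_{u,f,\mathbf{y}}^{2\pi},\psi_{j,\ell,\mathbf{y}}^{(\mathfrak{i})}\right\rangle_2}\leq C(\mathfrak{f},q)\,2^{-j(2u+3)/4},
\end{equation*}
which is strictly stronger than the claimed $2^{-j(u-1)/4}$ bound since $(2u+3)/4\geq(u-1)/4$ for all $u\geq 0$. The main technical point is the pointwise decay estimate for $K_\mathbf{n}$: because $\mathbf{N}_{j,\ell}^{(\mathfrak{i})}$ has anisotropic singular values of orders $2^j$ and $2^{j/2}$, the sharpest decay of $K_\mathbf{n}$ is genuinely anisotropic, but since the target rate is loose, the coarser isotropic bound driven by the smaller singular value $\sim 2^{j/2}$ is enough.
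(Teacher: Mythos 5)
Your plan—pass to the Fourier side via Parseval and Poisson summation, Fubini back to a kernel $K_\mathbf{n}$, and then integrate the pointwise Taylor remainder bound against the kernel—is a genuinely different route from the paper's. The paper never leaves the spatial domain: it cites the pointwise bounds on the periodic shearlet $\psi_{j,\ell,\mathbf{y}}^{(\mathfrak{i})}$ from \cite[Lemma 9]{schober:detection} (the bound $\abs{\psi_{j,\ell,\mathbf{y}}^{(\mathfrak{i})}(\mathbf{x})}\leq C\,2^{3j/4}$ and its decay version), splits $\mathbb{T}^2$ into $B_\delta(2\pi\mathbf{y})$ and its complement with $\delta=2^{-j/4}$, uses the Taylor remainder on the near part and the shearlet decay on the far part, and lands on $2^{-j(u-1)/4}$ directly. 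Your central idea, estimating $S(\mathbf{0})$ via the substitution $\mathbf{x}-2\pi\mathbf{y}=2^{-j/2}\mathbf{t}$ and the global bound $\abs{R(\mathbf{x})}\leq C\abs{\mathbf{x}-2\pi\mathbf{y}}_2^{u+1}$, is sound and the arithmetic $2^{-3j/4}\abs{S(\mathbf{0})}\lesssim 2^{-j(2u+3)/4}$ is correct (when $2q>u+3$ so the $\mathbf{t}$-integral converges; when only $2q\geq u$, one must restrict the $\mathbf{t}$-integral to the image of $T$, which yields the slightly weaker but still adequate $2^{-j(2q+3)/4}\cdot 2^{j\max(0,(u+3-2q)/2)}$).

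The genuine gap is the tail. You claim $\abs{K_\mathbf{n}(\mathbf{x})}\leq C\,2^{3j/2}(2^j\abs{\mathbf{n}}^2)^{-q}$ on $T$, which needs $\abs{\mathbf{x}-2\pi(\mathbf{y}+\mathbf{n})}_2\gtrsim\abs{\mathbf{n}}$ for $\mathbf{x}\in T$ and $\mathbf{n}\neq\mathbf{0}$. This fails for $\mathbf{n}$ with entries in $\lbrace -1,0,1\rbrace$: e.g.\ for $\mathbf{n}=(1,0)$ and $2\pi y_1$ near $-\pi$, the point $2\pi(\mathbf{y}+\mathbf{n})$ sits near $(\pi,0)$, so $\abs{\mathbf{x}-2\pi(\mathbf{y}+\mathbf{n})}_2$ for $\mathbf{x}\in T\subset(-\pi,\pi)^2$ is only bounded below by $\delta_0\mathrel{\mathop:}=\mathrm{dist}(\overline T,\partial(-\pi,\pi)^2)$, a quantity depending on $T$ and not on $\abs{\mathbf{n}}$. (This is exactly why the paper, in the analogous tail estimate for \cref{thm:hauptresultat}, integrates by parts against the phase $\mathrm{e}^{\mathrm{i}\boldsymbol{\xi}^{\mathrm{T}}2\pi(\mathbf{y}+\mathbf{n})}$ \emph{before} touching the spatial variable, keeping the decay factor $(1+2^j\abs{2\pi(\mathbf{y}+\mathbf{n})}_2^2)^{-q}$ free of $\mathbf{x}$.) After replacing $\abs{\mathbf{n}}$ by $\max(\delta_0,\abs{\mathbf{n}}-c)$ the tail sum is still fine, but it contributes $2^{-3j/4}\sum_{\mathbf{n}\neq\mathbf{0}}\abs{S(\mathbf{n})}\lesssim_{\delta_0}2^{-j(q-3/4)}$. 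Since the lemma only assumes $2q\geq u$, this term can dominate your $2^{-j(2u+3)/4}$, so the improved rate you announce is not established; what is established is $2^{-j\min(q-3/4,\,(2u+3)/4)}$, which for $2q\geq u$ and $u\geq 2$ is still $\leq 2^{-j(u-1)/4}$ and hence proves the lemma. In short: the method works, the tail bound as stated does not, the implicit constant must absorb $\delta_0$ (which is legitimate since $C(\mathfrak f,q)$ may depend on $T$), and the claimed strengthened exponent $(2u+3)/4$ should be retracted unless you also assume $2q>u+3$.
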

 	  \begin{proof}
 	  	Again, we present the proof only for $\mathfrak{i}=\mathfrak{h}$. With $\delta=2^{-j/4}$ we get
 	  	\begin{align*}
 			\abs{\left\langle \mathfrak{f}^{2\pi}-P_{u,f,\mathbf{y}}^{2\pi},\psi_{j,\ell,\mathbf{y}}^{(\mathfrak{h})} \right\rangle_2}&\leq\int\limits_{\mathbb{T}^2}\psi^{(\mathfrak{h})}_{j,\ell,\mathbf{y}}(\mathbf{x})\,\chi_{T}(\mathbf{x})\,\bigl\lvert f(\mathbf{x})-P_{u,f,\mathbf{y}}(\mathbf{x})\bigr\rvert\mathrm{d}\mathbf{x}\\
 			&=\Biggl(\;\int\limits_{B_{\delta}(2\pi \mathbf{y})}+\int\limits_{B^{\mathrm{c}}_{\delta}(2\pi \mathbf{y})}\Biggr)\psi^{(\mathfrak{h})}_{j,\ell,\mathbf{y}}(\mathbf{x})\,\chi_{T}(\mathbf{x})\,\bigl\lvert f(\mathbf{x})-P_{u,f,\mathbf{y}}(\mathbf{x})\bigr\rvert\mathrm{d}\mathbf{x}
 	  	\end{align*}
 	  	and write the last line as $\mathcal{I}_1+\mathcal{I}_2$. The approximation property of Taylor polynomials of order $u$ leads to
 	  	\begin{equation*}
 	  		\bigl\lvert f(\mathbf{x})-P_{u,f,\mathbf{y}}(\mathbf{x})\bigr\rvert\leq C\,2^{-j(u+1)/4}
 	  	\end{equation*}
 	  	for $\mathbf{x}\in B_{\delta}(2\pi \mathbf{y})$. The result in \cite[Lemma 9]{schober:detection} implies $\abs{\psi_{j,\ell,\mathbf{y}}^{(\mathfrak{h})}(\mathbf{x})}\leq C(q)\,2^{3j/4}$. Thus, we can bound $\mathcal{I}_1$ by
 	  	\begin{equation*}
 	  	 \abs{\mathcal{I}_1}\leq C(q)\,2^{3j/4}\int\limits_{ B_{\delta}(2\pi \mathbf{y})\cap T}\bigl\lvert f(\mathbf{x})-P_{u,f,\mathbf{y}}(\mathbf{x})\bigr\rvert\mathrm{d}\mathbf{x}\leq C(q)\, 2^{-j\left((u+2)/4-3/4\right)}=C(q)\,2^{-j(u-1)/4}.
 	  	\end{equation*}
 	 We again use \cite[Lemma 9]{schober:detection} but this time for the decay term to arrive at
 	  	\begin{align*}
 	  	 \abs{\mathcal{I}_2}&\leq C(q)\,2^{-j(q-3/4)}\int_{0}^{2\pi}\int_{2^{-\frac{j}{4}}}^{\infty}\rho^{1-2q}\,\mathrm{d}\rho\,\mathrm{d}\theta\\
 		 &\leq C_2(q)\,2^{-j(q-3/4)}\,2^{-j(1-q)/2}=C_3(q)\ 2^{-j(2q-1)/4}
 	  	\end{align*}
 	  	and the proof is finished since $2q\geq u$.
 	\end{proof}
     		 

{\section{Proof of Theorem 3.2} 
\label{sec:proof_of_theorem_3_2}

 Let $T\in \mathrm{STAR}^2(\tau)$ and $\mathfrak{f}\mathrel{\mathop:}=f\,\chi_{T}\in \mathcal{E}^{u+1}(\tau)$ be given. Moreover, let $p\mathrel{\mathop:}=p_u\mathrel{\mathop:}=T_uf(\mathbf{x};\,2\pi\mathbf{y})$ be the bivariate Taylor polynomial of $f$ with order $u>4$ around the point $2\pi\mathbf{y}$. We consider functions $P_u=p\,\chi_T$ and denote its $2\pi$-periodization by $P_u^{2\pi}$.\\
 
 In the first part of the proof we show
 \begin{equation}\label{proof:hauptresultat2_0}
 	\abs{\left\langle P_u^{2\pi},\psi^{(\mathfrak{i})}_{j,\ell,\mathbf{y}} \right\rangle_2}\geq C(u,n,q,\varepsilon_0,T)\,2^{-j(3/4+n)}.
 \end{equation}
 Similar to \cref{eq:beweis_der_oberen_schranke1} in the proof of \cref{thm:hauptresultat}, we use Parseval's identity and the Poisson summation formula to get
  \begin{equation*}
  	\left\langle P_u^{2\pi},\psi^{(\mathfrak{i})}_{j,\ell,\mathbf{y}}\right\rangle_2=2^{-3j/4}\sum_{\mathbf{n}\in \mathbb{Z}^2}\mathcal{F}^{-1}\left[\mathcal{F}[P_u]\Psi^{(\mathfrak{i})}_{j,\ell} \right]\Bigl(2\pi(\mathbf{y}+\mathbf{n})\Bigr)=2^{-3j/4}\sum_{\mathbf{n}\in \mathbb{Z}^2}S(\mathbf{n}),
  \end{equation*}
where
  \begin{equation*}
 	S(\mathbf{n})\mathrel{\mathop:}=\mathcal{F}^{-1}\left[\mathcal{F}[P_u]\Psi^{(\mathfrak{i})}_{j,\ell} \right]\Bigl(2\pi(\mathbf{y}+\mathbf{n})\Bigr).
  \end{equation*}
  i) 
  
  First, we show
  \begin{equation}\label{proof:hauptresultat2_1}
  	2^{-3j/4}\sum_{\mathbf{n}\in \mathbb{Z}^2\setminus\{\mathbf{0}\}}\abs{S(\mathbf{n})}\leq C(u,n,q)\,2^{-j(q+n+1/4)}.
  \end{equation}
  We consider the decomposition of $P_u$ on dyadic squares $Q\in \mathcal{Q}_j$ and define $P_{u,Q}\mathrel{\mathop:}=P_u\,\phi_Q$ to get
  \begin{equation*}
  	P_u=\sum_{Q\in \mathcal{Q}_j^0}P_{u,Q}+\sum_{Q\in \mathcal{Q}_j^1}P_{u,Q}.
  \end{equation*}
  We repeat the steps which led to \cref{eq:beweis_der_oberen_schranke4} and obtain
 \begin{equation*}
 	2^{-3j/4}\sum_{\mathbf{n}\in \mathbb{Z}^2\setminus\{\mathbf{0}\}}\abs{\mathcal{F}^{-1}\left[\mathcal{F}[P_{u,Q}]\Psi^{(\mathfrak{i})}_{j,\ell} \right]\Bigl(2\pi(\mathbf{y}+\mathbf{n})\Bigr)}\leq C(q)\, 2^{-jq}\norm{L^q\left[\mathcal{F}[P_{L,Q}]\,\Psi^{(\mathfrak{i})}_{j,\ell}\right] }_{\mathrm{supp}\,\Psi_{j,\ell}^{(\mathfrak{i})},2}.
 \end{equation*}
 With the linearity of the Fourier transform and the estimate of the absolutely convergent series in the last line we can write 
  \begin{align*}
  	2^{-3j/4}\sum_{\mathbf{n}\in \mathbb{Z}^2\setminus\{\mathbf{0}\}}\abs{S(\mathbf{n})}&\leq2^{-3j/4}\sum_{\mathbf{n}\in \mathbb{Z}^2\setminus\{\mathbf{0}\}}\left( \sum_{Q\in \mathcal{Q}_j^0}+\sum_{Q\in \mathcal{Q}_j^1} \right) \abs{\mathcal{F}^{-1}\left[\mathcal{F}[P_{u,Q}]\Psi^{(\mathfrak{i})}_{j,\ell} \right]\Bigl(2\pi(\mathbf{y}+\mathbf{n})\Bigr)}\\
 	&=\left( \sum_{Q\in \mathcal{Q}_j^0}+\sum_{Q\in \mathcal{Q}_j^1} \right)2^{-3j/4}\sum_{\mathbf{n}\in \mathbb{Z}^2\setminus\{\mathbf{0}\}} \abs{\mathcal{F}^{-1}\left[\mathcal{F}[P_{u,Q}]\Psi^{(\mathfrak{i})}_{j,\ell} \right]\Bigl(2\pi(\mathbf{y}+\mathbf{n})\Bigr)}\\
 	&\leq C(q)\, 2^{-jq}\left( \sum_{Q\in \mathcal{Q}_j^0}+\sum_{Q\in \mathcal{Q}_j^1} \right)\norm{L^q\left[\mathcal{F}[P_{u,Q}]\,\Psi^{(\mathfrak{i})}_{j,\ell}\right] }_{\mathrm{supp}\,\Psi_{j,\ell}^{(\mathfrak{i})},2}.
  \end{align*}
  Next, we use \cref{lem:norm_Lq} and the estimates from \cref{eq:maechtigkeit_Q_j} and obtain \cref{proof:hauptresultat2_1} since
  \begin{align*}
  	2^{-3j/4}\sum_{\mathbf{n}\in \mathbb{Z}^2\setminus\{\mathbf{0}\}}\abs{S(\mathbf{n})}&\leq C(q)\, 2^{-jq}\left( C_1(u,q)\,2^j\,2^{-j(q+u+3/2)} + C_2(n,q)\, 2^{j/2}\,2^{3/4+n}\right)\\
 	&\leq C_3(u,n,q)\,2^{-j(q+n+1/4)}.
  \end{align*}
ii)

In the following, we show
    				\begin{equation}\label{eq:lower_bound}
    					\abs{S(\mathbf{0})}\geq C_4(n,q,\varepsilon_0,T)\,2^{-jn}.
    				\end{equation}
 				Assume, the last estimate is true. Then, for sufficiently large $q\in \mathbb{N}$ with \cref{proof:hauptresultat2_1} and the inverse triangle inequality we obtain
	   		\begin{equation*}
	   			\abs{\left\langle P_u^{2\pi},\psi^{(\mathfrak{i})}_{j,\ell,\mathbf{y}} \right\rangle_2}\geq 2^{-3j/4}\left(  \abs{S(\mathbf{0})}-\sum_{\mathbf{n}\in \mathbb{Z}^2\setminus\{\mathbf{0}\}}\abs{S(\mathbf{n})}\right)\geq C_5(u,n,q,\varepsilon_0,T)\,2^{-j(3/4+n)}
	   		\end{equation*}
			and therefore \cref{proof:hauptresultat2_0}.\\
	  
    		We now start with the proof of \cref{eq:lower_bound}. For this reason, we recall the representation 
 		  \begin{equation*}
				S(\mathbf{0})=\mathcal{F}^{-1}\left[\mathcal{F}[P_u]\Psi^{(\mathfrak{i})}_{j,\ell}\right](2\pi\mathbf{y})=\sum_{m=0}^{u}C_m\sum_{k=0}^{M-1}\,I_k^{(\mathfrak{i})}(j,\ell,\mathbf{y},m)
 		 \end{equation*}
		 from \cref{eq:F_inv} in polar coordinates with
 		    \begin{align}
				I_k^{(\mathfrak{i})}(j,\ell,\mathbf{y},m)&=\int\limits_{0}^{\infty}\int\limits_{0}^{2\pi}\int\limits_{a_k}^{a_{k+1}}\Psi_{j,\ell}^{(\mathfrak{i})}\left(\rho\,\boldsymbol{\Theta}(\theta)\right)p_{\theta}^m(\boldsymbol{\gamma}(x))\rho^{-m}\,\mathrm{e}^{\mathrm{i} \rho \boldsymbol{\Theta}^{\mathrm{T}}(\theta)(2\pi\mathbf{y}-\boldsymbol{\gamma}(x))}\times\boldsymbol{\Theta}^{\mathrm{T}}(\theta)\,\boldsymbol{\beta}(x)\,\mathrm{d}x\,\mathrm{d}\theta\,\mathrm{d}\rho\notag\\\label{proof:upper_bound1}
 		&=2^{-j(m-1)}\int\limits_{0}^{\infty}\int\limits_{0}^{2\pi}\Psi_{j,\ell}^{(\mathfrak{i})}\left( 2^j\rho\,\boldsymbol{\Theta}(\theta) \right)\,\rho^{-m}\,\mathrm{e}^{2\pi\mathrm{i}2^j\rho\,\boldsymbol{\Theta}^{\mathrm{T}}(\theta)\mathbf{y}}\,G_k(\rho,\theta,m)\,\mathrm{d}\theta\, \mathrm{d}\rho
 		    \end{align}
 			  and
 			  \begin{equation}\label{eq:L_k}
				G_k(\rho,\theta,m)\mathrel{\mathop:}=\int\limits_{a_k}^{a_{k+1}}p_{\theta}^m(\boldsymbol{\gamma}(x))\,\mathrm{e}^{-\mathrm{i}2^j\rho\boldsymbol{\Theta}^{\mathrm{T}}(\theta)\boldsymbol{\gamma}(x)}\,\boldsymbol{\Theta}^{\mathrm{T}}(\theta)\,\boldsymbol{\beta}(x)\,\mathrm{d}x.
 			  \end{equation}
    		We consider only $\mathfrak{i}=\mathfrak{h}$ since the case $\mathfrak{i}=\mathfrak{v}$ is similar. First, from \cref{lem:orientation_lemma} and the inverse triangle inequality it follows that
    		\begin{equation*}
    			\abs{\mathcal{F}^{-1}\left[\mathcal{F}[P_u]\Psi^{(\mathfrak{h})}_{j,\ell}\right](2\pi\mathbf{y})}\geq\abs{\sum_{m=0}^{u}C_m\sum_{k\in \mathcal{M}^{(\mathfrak{v})}}\,I_k^{(\mathfrak{h})}(j,\ell,\mathbf{y},m)}-\abs{\mathcal{M}^{(\mathfrak{h})}}C_6(m,N,p)\,2^{-j(N+m-1/2)}
    		\end{equation*}
 		and the last term is negligible for sufficiently large $N\in \mathbb{N}$. By assumption of the theorem, the set \cref{eq:U_epsilon} is nonempty and there exists $k^*=k^*(\mathbf{y}),\,0\leq k^*\leq M-1,$ such that $U_\varepsilon(\mathbf{y})$ can be represented by a vertical curve $\boldsymbol{\gamma}(x)=(t_{k^*}(x),x)^{\mathrm{T}}$ for $x\in[a_{k^*},a_{k^*+1})$. Thus, we can use \cref{lem:lokalisierungslemma} to bound the expression from above by
    		\begin{align*}
    			\abs{\sum_{m=0}^{u}C_m\sum_{k\in \mathcal{M}^{(\mathfrak{v})}}\,I_k^{(\mathfrak{h})}(j,\ell,\mathbf{y},m)}\geq\abs{\sum\limits_{m=0}^{u}C_m\,I_{k^*}^{(\mathfrak{h})}(j,\ell,\mathbf{y},m)}-C_7(m,p,q,\varepsilon_0)\,2^{-j(q+m-1/2)}.
    		\end{align*}
			Therefore, the desired estimate \cref{eq:lower_bound} is equivalent to find a constant $C_8(n,T)$ such that
    		\begin{equation}\label{eq:haupttheorem2_beh}
    			\abs{I_{k^*}^{(\mathfrak{h})}(j,\ell,\mathbf{y},m)}\geq C_8(n,T)\,2^{-jn}.
    		\end{equation}
 			For $k=k^*$ we split up the integral $I_{k^*}^{(\mathfrak{h})}(j,\ell,\mathbf{y},m)$ from \cref{proof:upper_bound1} into
 	  	  \begin{align*}
 	  	  	I_{k^*}^{(\mathfrak{h})}(j,\ell,\mathbf{y},m)&=2^{-j(m-1)}\int\limits_{0}^{\infty} \biggl(\int\limits_{-\frac{\pi}{2}}^{\frac{\pi}{2}}+\int\limits_{\frac{\pi}{2}}^{\frac{3\pi}{2}}\biggr)\Psi_{j,\ell}^{(\mathfrak{h})}\left( 2^j\rho\,\boldsymbol{\Theta}(\theta) \right)\,\rho^{-m}\,\mathrm{e}^{2\pi\mathrm{i}2^j\rho\,\boldsymbol{\Theta}^{\mathrm{T}}(\theta)\mathbf{y}}\mathcal{L}_{k^*}(\rho,\theta,m)\,\mathrm{d}\theta\, \mathrm{d}\rho\\
 			&=\mathrel{\mathop:}I_{k^*,1}^{(\mathfrak{h})}(j,\ell,\mathbf{y},m)+I_{k^*,2}^{(\mathfrak{h})}(j,\ell,\mathbf{y},m).
 	  	  \end{align*}
 		For convenience, we write $I_{1}\mathrel{\mathop:}=I_{k^*,1}^{(\mathfrak{h})}(j,\ell,\mathbf{y},m)$ and $I_{2}\mathrel{\mathop:}=I_{k^*,2}^{(\mathfrak{h})}(j,\ell,\mathbf{y},m)$ for the rest of the proof. We follow the ideas from \cite[p. 34]{schober:detection} and use the symmetry properties of the admissible functions $\widetilde{g}$ and $g$ to obtain
		\begin{equation}\label{eq:I1_I2}
			I_{k^*}=2\,\mathrm{i}\,\mathrm{Im}(I_{1})=2\,\mathrm{i}\,\mathrm{Im}(I_{2}).
		\end{equation}
		The vertical curve $\boldsymbol{\gamma}(x)$ is parametrized by $(t_{k^*}(x),x)^{\mathrm{T}}$ for $x\in[a_{k^*},a_{k^*+1})$. For the point $\mathbf{x}_0=(t_{k^*}(x_0),x_0)^{\mathrm{T}}\in U_{\varepsilon}(\mathbf{y})$ we have that $x_0\in[a_{k^*},a_{k^*+1})$ and therefore $\abs{x-x_0}<\varepsilon=\varepsilon_0\,2^{-j/2}$. We write the function $t_{k^*}(x)$ locally as
    		\begin{equation*}
    			t_{k^*}(x)=t_{k^*}(x_0)+B(x-x_0)+A(x-x_0)^2+r(x-x_0),
    		\end{equation*} 
 		where $r(x-x_0)=\mathcal{O}\left((x-x_0)^3\right)$ and in the case $\mathfrak{i}=\mathfrak{h}$ we have $B=t_{k^*}'(x_0)\in[-1,1]$. In the following, we assume $A\mathrel{\mathop:}=\frac{1}{2}\,t_{k^*}''(x_0)>0$. The proof for $A<0$ is similar and will be omitted. We adapt the approach of \cite{labate:detection,schober:detection} and substitute $v=x-x_0$ to get $\tilde{a}_{k^*}\mathrel{\mathop:}=a_{k^*}-x_0$, and for \cref{eq:L_k} we have
 		  \begin{equation*}
		G_{k^*}(\rho,\theta,n)=\int\limits_{\tilde{a}_{k^*}}^{\tilde{a}_{k^*+1}}\mathrm{e}^{-\mathrm{i}2^j\rho\boldsymbol{\Theta}^{\mathrm{T}}(\theta)\left(t_{k^*}(x_0)+Bv+Av^2+\mathcal{O}(v^3),v+x_0\right)^{\mathrm{T}}}\,p_{\theta}^n(\boldsymbol{\gamma}(v+x_0))\,\boldsymbol{\Theta}^{\mathrm{T}}(\theta)\,\boldsymbol{\beta}(v+x_0)\,\mathrm{d}v.
 		  \end{equation*}
 		 The result of \cite[Lemma 1]{schober:detection} implies
    		\begin{equation}\label{eq:supp_2hochj_psi}
    			\mathrm{supp}\,\Psi^{(\mathfrak{h})}_{j,\ell}(2^j\rho\,\boldsymbol{\Theta}(\theta))\subset\left\lbrace(\rho,\theta)\in \mathbb{R}\times\left[-\frac{\pi}{2},\frac{\pi}{2}\right]:\frac{1}{3}<\abs{\rho}< 2,\,\theta_{j,\ell-2}^{(\mathfrak{h})}<\theta_t<\theta_{j,\ell+2}^{(\mathfrak{h})}\right\rbrace.
    		\end{equation} 
			By assumption, for the directional derivatives on the boundary we have
			 		\begin{equation*}
			 			p_{\theta}^m(\boldsymbol{\gamma}(x))\begin{cases}
			 			=0, &\text{if } 0\leq m<n,\\
			 			\neq 0, &\text{if } m=n,
			 			\end{cases}
			 		\end{equation*}
					for $\theta\in \left( \theta_{j,\ell-2}^{(\mathfrak{h})},\theta_{j,\ell+2}^{(\mathfrak{h})} \right)$, why $I_{k^*}(j,\ell,\mathbf{y},m)=0$ for $0\leq m<n$ and $p_\theta^0(\boldsymbol{\gamma}(x))=p(\boldsymbol{\gamma}(x))\neq 0$ for $n=0$. As the proof will show, we only need to consider the integral $I_{k^*}(j,\ell,\mathbf{y},n)$ since the integrals $I_{k^*}(j,\ell,\mathbf{y},m)$ for $n<m_1\leq u$ decay faster.\\
					
 		For the integral $I_{k^*,1}^{(\mathfrak{h})}(j,\ell,\mathbf{y},m)$ we get
 		  \begin{equation*}
			I_1=2^{-j(n-1)}\int\limits_{\frac{1}{3}}^{2}\int\limits_{\theta_{j,\ell-2}^{(\mathfrak{h})}}^{\theta_{j,\ell+2}^{(\mathfrak{h})}}\int\limits_{\tilde{a}_{k^*}}^{\tilde{a}_{k^*+1}}\Psi_{j,\ell}^{(\mathfrak{h})}\left( 2^j\rho\,\boldsymbol{\Theta}(\theta) \right)\,\rho^{-n}\,\mathrm{e}^{\mathrm{i} 2^j\rho R(v,\theta)}\,\varphi(v,\theta)\,\mathrm{d}v\,\mathrm{d}\theta\, \mathrm{d}\rho,
 			\end{equation*}
    		where $\Lambda\mathrel{\mathop:}=2^j\rho$, $\varphi(v,\theta)\mathrel{\mathop:}=p_{\theta}^n(\boldsymbol{\gamma}(v+x_0))\,\boldsymbol{\Theta}^{\mathrm{T}}(\theta)\,\boldsymbol{\beta}(v+x_0)$ and 
 	      		\begin{align}
 					R(v,\theta)&\mathrel{\mathop:}=-\boldsymbol{\Theta}^{\mathrm{T}}(\theta) \bigl(Av^2+Bv+t_{k^*}(x_0)+\mathcal{O}(v^3)-2\pi y_1,v+x_0-2\pi y_2\bigr)^{\mathrm{T}}\notag\\
 					&=-\cos\theta\bigl( Av^2+(B+\tan\theta)v+t_{k^*}(x_0)+\mathcal{O}(v^3)-2\pi y_1+(x_0-2\pi y_2)\tan\theta\bigr)\notag\\
 					&=-\cos\theta\left(A\biggl(v+\frac{B+\tan\theta}{2A}\biggr)^2 +\widetilde{C}-2\pi y_1 - 
 	      		 \frac{(B+\tan\theta)^2}{4A}\right)\label{eq:R_A0}.
 	      		\end{align}
 				In the last line $\widetilde{C}\mathrel{\mathop:}=t_{k^*}(x_0)+(x_0-2\pi y_2)\tan\theta+r(v)$ and since $\abs{v}<\varepsilon=\varepsilon_0\,2^{-j/2}$ we have $\abs{r(v)}< C_1\,\varepsilon^3=C_2(\varepsilon_0)\,2^{-3j/2}$. It follows that
 				\begin{equation*}
 					\frac{\partial R}{\partial v}(v,\theta)=-2A\cos\theta\biggl(v+\frac{B+\tan\theta}{2A}\biggr)^2=0
 				\end{equation*}
 				if $v_{\theta}=-\frac{B+\tan\theta}{2A}$ and we introduce $\phi(v,\theta)\mathrel{\mathop:}=R(v,\theta)-R(v_{\theta},\theta)$ which gives
 				\begin{equation*}
 					\phi(v_{\theta},\theta)=\frac{\partial \phi}{\partial v}(v_{\theta},\theta)=0,\qquad\qquad\frac{\partial \phi^2}{\partial v^2}(v_{\theta},\theta)=\frac{\partial R^2}{\partial v^2}(v_{\theta},\theta)=-2A\cos\theta\neq0,
 				\end{equation*}
 				since $\cos\theta>0$ for $\theta\in \left(\theta_{j,\ell-2}^{(\mathfrak{h})},\theta_{j,\ell+2}^{(\mathfrak{h})} \right)$. This allows us to write $I_1$ as
 		 \begin{equation}\label{eq:I1} I_1=2^{-j(n-1)}\int\limits_{\frac{1}{3}}^{2}\int\limits_{\theta_{j,\ell-2}^{(\mathfrak{h})}}^{\theta_{j,\ell+2}^{(\mathfrak{h})}}\Psi_{j,\ell}^{(\mathfrak{h})}\left( 2^j\rho\,\boldsymbol{\Theta}(\theta) \right)\,\rho^{-n}\,\mathrm{e}^{\mathrm{i} 2^j\rho R(v_{\theta},\theta)}\left(\int\limits_{\tilde{a}_{k^*}}^{\tilde{a}_{k^*+1}}\mathrm{e}^{\mathrm{i}\, \Lambda\, \phi(v,\theta)}\, \varphi(v,\theta)\,\mathrm{d}v\right)\mathrm{d}\theta\,\mathrm{d}\rho.
 		 \end{equation}
  		We use \cite[Lemma 13]{schober:detection}, called method of stationary phase, for the inner integral to get the estimate
		  		 \begin{equation}\label{eq:I11}
		  		\int\limits_{\tilde{a}_{k^*}}^{\tilde{a}_{k^*+1}}\mathrm{e}^{\mathrm{i}\, \Lambda\, \phi(v,\theta)}\,\varphi(v,\theta)\,\mathrm{d}v=C\,\sqrt{\pi\mathrm{i}}\, (2^j\rho\,\abs{A\,\cos\theta})^{-\frac{1}{2}}\,\varphi(v_{\theta},\theta)+ r_2(j),
		  		 \end{equation}
		 		where $\abs{r_2(j)}\leq C_2\,2^{-j}$. As remarked in \cite[p. 115]{labate:detection} the constant $C_2>0$ is independent of $\theta$, $\rho$, $j$, $\ell$ and $\mathbf{y}$. Using \cref{eq:I11}, we further split up the integral \cref{eq:I1} in $I_1=I_{11}+I_{12}$ with
		 		 \begin{align*}
		&I_{11}=C\,2^{-j(n-1/2)}\sqrt{\frac{\pi\mathrm{i}}{A}}\int\limits_{\frac{1}{3}}^{2}\int\limits_{\theta_{j,\ell-2}^{(\mathfrak{h})}}^{\theta_{j,\ell+2}^{(\mathfrak{h})}}\Psi_{j,\ell}^{(\mathfrak{h})}\left( 2^j\rho\,\boldsymbol{\Theta}(\theta) \right)\,\rho^{-(n+1/2)}\,\mathrm{e}^{\mathrm{i} 2^j\rho R(v_{\theta},\theta)}\,\abs{\cos\theta}^{-1/2}\,\varphi(v_{\theta},\theta)\,\mathrm{d}\theta\,\mathrm{d}\rho,\\
		&I_{12}=C_2\,2^{-jn}\int\limits_{\frac{1}{3}}^{2}\int\limits_{\theta_{j,\ell-2}^{(\mathfrak{h})}}^{\theta_{j,\ell+2}^{(\mathfrak{h})}}\Psi_{j,\ell}^{(\mathfrak{h})}\left( 2^j\rho\,\boldsymbol{\Theta}(\theta) \right)\,\rho^{-n}\,\mathrm{e}^{\mathrm{i} 2^j\rho R(v_{\theta},\theta)}\,\mathrm{d}\theta\,\mathrm{d}\rho.
		 		 \end{align*}
		 			With the substitution $t=2^{j/2}\tan\theta-\ell$ we have $\mathrm{d}\theta=2^{-j/2}\cos^2{\theta_t}\,\mathrm{d}t$ where $\theta_t\mathrel{\mathop:}=\arctan((\ell+t)\,2^{-j/2})=\theta_{j,\ell+t}^{(\mathfrak{h})}$. For the function $R(v_{\theta},\theta)$ from \cref{eq:R_A0} this leads to 
		 				\begin{equation*}
		 					2^jR(v_{\theta_t},t)=\cos\theta_t\left(\frac{(2^{j/2}B+\ell+t)^2}{4A}-2^j(\widetilde{C}-2\pi y_1) \right)=\cos\theta_t\left(\frac{(p+t)^2}{4A} +D \right),
		 				\end{equation*}
		 				where $p\mathrel{\mathop:}=2^{j/2}B+\ell$ and $D\mathrel{\mathop:}=2^j(2\pi y_1-\widetilde{C})$. By assumption, we have $\mathbf{x}_0\in U_{\varepsilon}(\mathbf{y})$ such that $\abs{p}\leq \frac{1}{4}$ and $\abs{D}\leq \frac{3\pi}{4}$. \\
				
		From \cref{eq:supp_2hochj_psi}, it follows that $I_{11}=I_{12}=0$ for $\abs{t}>2$ and we get
		    		 \begin{align}
		&I_{11}=C\,2^{-jn}\sqrt{\frac{\pi\mathrm{i}}{A}}\int\limits_{\frac{1}{3}}^{2}\int\limits_{-2}^{2}\Psi_{j,\ell}^{(\mathfrak{h})}\left( 2^j\rho\,\boldsymbol{\Theta}(\theta_t) \right)\,\rho^{-(n+1/2)}\,\mathrm{e}^{\mathrm{i}\rho\cos\theta_t\left(\frac{(p+t)^2}{4A} +D \right)}\,\abs{\cos{\theta_t}}^{3/2}\,\varphi(v_{\theta_t},\theta_t)\,\mathrm{d}t\,\mathrm{d}\rho,\label{I_11_temp}\\\notag
		&I_{12}=C_2\,2^{-j(n+1/2)}\int\limits_{\frac{1}{3}}^{2}\int\limits_{-2}^{2}\Psi_{j,\ell}^{(\mathfrak{h})}\left( 2^j\rho\,\boldsymbol{\Theta}(\theta_t) \right)\,\rho^{-n}\,\mathrm{e}^{\mathrm{i}\rho\cos\theta_t\left(\frac{(p+t)^2}{4A} +D \right)}\,\abs{\cos{\theta_t}}^{2}\,\mathrm{d}t\,\mathrm{d}\rho.
		    		 \end{align}
		 		 A direct estimate with the triangle inequality leads to $\abs{I_{12}}\leq C_3\,2^{-j(n+1/2)}$ and we can omit this term in the following.
		 
		 		From the definition of $\theta_{j,\ell}^{(\mathfrak{h})}$ in \cref{eq:theta_jl} we get
		 		\begin{equation*}
		 			\cos\theta_t=\cos \left( \arctan \left( 2^{-j/2}(\ell+t)\right) \right)=\left( 1+\left( 2^{-j/2}(\ell+t) \right)^2 \right)^{-1/2}.
		 		\end{equation*} 
		 		The right-hand side is a function
		 		\begin{equation*}
		 			w(x)=\left( 1+\left( 2^{-j/2}\ell+x \right)^2 \right)^{-1/2}
		 		\end{equation*}
		 		evaluated in $x=2^{-j/2}\,t$. Using the Taylor approximation of order zero, we obtain
		 		\begin{equation*}
		 			h(2^{-j/2}t)=\left( 1+\left( 2^{-j/2}\ell\right)^2 \right)^{-1/2}+w'(\xi)\,2^{-j/2}t
		 		\end{equation*} 
		 		with $\abs{\xi}\leq \abs{2^{-j/2}t}\leq 2^{-j/2+1}$ since $\abs{t}\leq 2$. This leads to
		 		\begin{equation*}
		 			\abs{2^{-j/2}t\,w'(\xi)}\leq2^{-j/2+1}\frac{\abs{2^{-j/2}+\xi}}{\left( 1+\left( 2^{-j/2}\ell+\xi\right)^2\right)^{3/2}}\leq \frac{3\cdot2^{-j+1}}{\left( 1+\left( 2^{-j/2}\ell+\xi\right)^2\right)^{3/2}},
		 		\end{equation*}
		 		and we write $\cos\theta_t=\mu_{j,\ell}+r_3(j)$ with $r_3(j)=\mathcal{O}\left(2^{-j}\right)$ and $\mu_{j,\ell}\mathrel{\mathop:}=(1+(2^{-j/2}\ell)^2)^{-1/2}$ fulfills $2^{-1/2}\leq\abs{\mu_{j,\ell}}\leq 1$. Similar to the previous case, we write $\sin\theta_t=2^{-j/2}\,\ell\,\mu_{j,\ell}+r_4(j)$ with $r_4(j)=\mathcal{O}\left(2^{-j}\right)$. We omit the additive term with fast decay and replace $\cos\theta_t$ by $\mu_{j,\ell}$ and $\sin\theta_t$ by $2^{-j/2}\,\ell\,\mu_{j,\ell}$.
		
		 		 The curve $\boldsymbol{\gamma}(x)$ is parametrized by $(t_{k^*}(x),x)^{\mathrm{T}}$ for $x\in[a_{k^*},a_{k^*+1})$ leading to
		 		\begin{equation*} 
					\boldsymbol{\beta}(v_{\theta_t}+x_0)=\mathbf{n}(\boldsymbol{\gamma}(v_{\theta_t}+x_0))\abs{\boldsymbol{\gamma}'(v_{\theta_t}+x_0)}_2=\left( -1,t'(v_{\theta_t}+x_0) \right)^{\mathrm{T}}\sqrt{t'(v_{\theta_t}+x_0)^2+1}
		 		\end{equation*}
		 		and therefore
		 		\begin{equation*}
		 			\boldsymbol{\Theta}^{\mathrm{T}}(\theta_t)\,\boldsymbol{\beta}(v_{\theta_t}+x_0)=\left( \mu_{j,\ell}\left( 2^{-j/2}\,\ell\,t'(v_{\theta_t}+x_0)-1 \right) \right)\sqrt{t'(v_{\theta_t}+x_0)^2+1}.
		 		\end{equation*}
		 	 Moreover, by the assumption on the directional derivative of order $n$ there is $\widetilde{q}$ such that 
		 		\begin{equation*}
		 			\abs{p_{\theta_t}^n(\boldsymbol{\gamma}(v_{\theta_t}+x_0))-p_{\theta_t}^n(\boldsymbol{\gamma}(\widetilde{q})}\leq C\,2^{-j/2}
		 		\end{equation*}
		 		and $p_{\theta_t}^n(\boldsymbol{\gamma}(\widetilde{q})\neq 0$. We replace $\varphi(v_{\theta_t},\theta_t)=p_{\theta_t}^n(\boldsymbol{\gamma}(v_{\theta_t}+x_0))\,\boldsymbol{\Theta}^{\mathrm{T}}(\theta_t)\,\boldsymbol{\beta}(v_{\theta_t}+x_0)$ in \cref{I_11_temp} by a constant and write
		 		\begin{equation*}
		I_{11}=C_3\,2^{-jn}\,\mu_{j,\ell}^{3/2}\,\sqrt{\frac{\mathrm{i}}{A}}\int\limits_{\frac{1}{3}}^{2}\int\limits_{-2}^{2}\Psi_{j,\ell}^{(\mathfrak{h})}\left( 2^j\rho\,\boldsymbol{\Theta}(\theta_t) \right)\,\rho^{-(n+1/2)}\,\mathrm{e}^{\mathrm{i}\rho\,\mu_{j,\ell}\left(\frac{(p+t)^2}{4A} +D \right)}\,\mathrm{d}t\,\mathrm{d}\rho.
		 		\end{equation*}
		 		Next, we write $\lambda=\rho\,\mu_{j,\ell}$ which gives
		 		\begin{equation*}
		\Psi^{(\mathfrak{h})}_{j,\ell}(2^j\rho\,\boldsymbol{\Theta}(\theta_t))=\widetilde{g}(\rho\cos\theta_t)\,g\left(\rho\cos\theta_t(2^{j/2}\tan\theta_t-\ell)\right)=\widetilde{g}(\lambda)\,g\left(t\,\lambda\right).
		 		\end{equation*} 
		 		From here we can follow the steps from \cite[p. 36]{schober:detection} with the obvious changes in our case and write
		   		 \begin{equation}\label{eq:H_I11} 		I_{11}=C_3\,2^{-jn}\,\mu_{j,\ell}^{n+1}\,\sqrt{\frac{\mathrm{i}}{A}}\int\limits_{\frac{1}{3}}^{2}\widetilde{g}(\lambda)\,\mathrm{e}^{\mathrm{i}\lambda D}\,\lambda^{-(n+1/2)}\,H(\lambda,p,A)\,\mathrm{d}\lambda,
		   		 \end{equation}
		  		 where
		  		 \begin{equation*} 		
					 H(\lambda,p,A)\mathrel{\mathop:}=\sqrt{\frac{A}{\lambda}}\Bigl(a(\lambda,p,A)+\mathrm{i}\,b(\lambda,p,A)\Bigr)
		  		 \end{equation*}
		 		and
		 		 \begin{align*}
		 		a(\lambda,p,A)&\mathrel{\mathop:}=\int\limits_{0}^{\infty}\left(g\left(2\sqrt{A\lambda\,v}+ p\,\lambda\right)+g\left(2\sqrt{A\lambda\,v}- p\,\lambda\right)\right)\,\frac{\cos{v}}{\sqrt{v}}\,\mathrm{d}v,\\
				b(\lambda,p,A)&\mathrel{\mathop:}=\int\limits_{0}^{\infty}\left(g\left(2\sqrt{A\lambda\,v}+ p\,\lambda\right)+g\left(2\sqrt{A\lambda\,v}- p\,\lambda\right)\right)\,\frac{\sin{v}}{\sqrt{v}}\,\mathrm{d}v.
		 		 \end{align*}
				With the positive solution $\sqrt{\mathrm{i}}=\frac{1+\mathrm{i}}{\sqrt{2}}$ we can write \cref{eq:H_I11} as $I_{11}=\mathrm{Re}(I_{11})+\mathrm{i}\,\mathrm{Im}(I_{11})$ with
		    		 \begin{align*}  
		 			 \mathrm{Im}(I_{11})&=C_4(n,A)\,2^{-jn}\int\limits_{\frac{1}{3}}^{\frac{4}{3}}\widetilde{g}(\lambda)\,\lambda^{-(n+1)}\,\Bigl( \Bigl[a(\lambda,p,A)+b(\lambda,p,A)\Bigr]\cos(D\lambda)\\
		 			&\qquad+\Bigl[a(\lambda,p,A)-b(\lambda,p,A)\Bigr]\sin(D\lambda) \Bigr)\mathrm{d}\lambda.
		    		 \end{align*}
		    		We use the relation $I_{2}=-\overline{I_{1}}$ from \cref{eq:I1_I2} and repeat the previous steps for the integral $I_{2}$ instead of $I_{1}$ to get $I_{21}=\mathrm{Re}(I_{21})+\mathrm{i}\,\mathrm{Im}(I_{21})$ with
		    		 \begin{align*}   \mathrm{Im}(I_{21})&=C_5(n,A)\,2^{-jn}\int\limits_{\frac{1}{3}}^{\frac{4}{3}}\widetilde{g}(\lambda)\,\lambda^{-(n+1)}\,\Bigl(\Bigl[a(\lambda,p,A)+b(\lambda,p,A)\Bigr]\sin(D\lambda)\\
		 			 &\qquad-\Bigl[a(\lambda,p,A)-b(\lambda,p,A)\Bigr]\cos(D\lambda) \Bigr)\mathrm{d}\lambda.
		    		 \end{align*}
		 		 The integrals $\mathrm{Im}(I_{11})$ and $\mathrm{Im}(I_{21})$ are up to the factor $\lambda^{-(n+1)}$ identical to $P_1(D,p,A)$ and $P_2(D,p,A)$ from \cite[Lemma 16]{schober:detection} which can similarly be shown to hold true in our case. The assumptions $\abs{p}\leq \frac{1}{2}$ and $\abs{D}\leq \frac{3\pi}{4}$ of that lemma are also fulfilled.
		    		From \cref{eq:I1_I2} we obtain 
		 		\begin{equation*}
		 			\bigl\lvert I_{k^*}(j,\ell,\mathbf{y},m)\bigr\rvert =2\bigl\lvert\mathrm{Im}(I_{11}+I_{12})\bigr\rvert=2\bigl\lvert\mathrm{Im}(I_{21}+I_{22})\bigr\rvert
		 		\end{equation*}
		 		and with the inverse triangle inequality and \cite[Lemma 16]{schober:detection} we have shown the lower bound \cref{eq:haupttheorem2_beh} and thus the estimate \cref{eq:lower_bound}. We have proven \cref{proof:hauptresultat2_0} in the case $A>0$. For $A=0$ we omit the proof since the arguments from \cite{schober:detection} in that case can merely be repeated with the obvious modifications. \\
				 
				In the last part of the proof we show the lower bound of \cref{thm:hauptresultat2} for cartoon-like functions with \cref{lem:P_L}. In order to do that, we consider functions $\mathfrak{f}_0\in \mathcal{E}^{u+1}(\tau)$ of the form $\mathfrak{f}_0=f_0+f\,\chi_T=f_0+\mathfrak{f}$ with $\mathfrak{f}=f\,\chi_T$ and $f_0,f\in C_0^{u+1}(\mathbb{R}^2)$. In \cref{eq:f_0} we show that
		    	 \begin{equation*} 
		    	 	\abs{\left\langle f_0^{2\pi},\psi^{(\mathfrak{i})}_{j,\ell,\mathbf{y}}\right\rangle_2}\leq C_3(u,q)\,2^{-j(q+u+1/2)}.
		    	 \end{equation*}
		We use the inverse triangle inequality, \cref{proof:hauptresultat2_0}, \cref{lem:P_L} and the assumption $u>4(n+1)$ to get
		 			 \begin{align*}
		 			 	\abs{\left\langle \mathfrak{f}_0^{2\pi},\psi_{j,\ell,\mathbf{y}}^{(\mathfrak{i})}\right\rangle_2}&\geq\abs{\left\langle P_{u,f,\mathbf{y}}^{2\pi},\psi_{j,\ell,\mathbf{y}}^{(\mathfrak{i})} \right\rangle_2}-\abs{\left\langle \mathfrak{f}^{2\pi}-P_{u,f,\mathbf{y}}^{2\pi},\psi_{j,\ell,\mathbf{y}}^{(\mathfrak{i})} \right\rangle_2}-\abs{\left\langle f_0^{2\pi},\psi^{(\mathfrak{i})}_{j,\ell,\mathbf{y}}\right\rangle_2}\\
		 				&\geq C_1(u,n,q,\varepsilon_0,T)\,2^{-j(3/4+n)}-C_2(\mathfrak{f},q)\,2^{-j(u-1)/4}-C_3(u,q)\,2^{-j(q+u+1/2)}\\
		 				&\geq C_4 (u,n,q,\varepsilon_0,T)\,2^{-j(3/4+n)}
		 			 \end{align*}
		 			 and \cref{thm:hauptresultat2} is proven. \qed


\footnotesize


\begin{thebibliography}{xxxxx}
 \addcontentsline{toc}{section}{\sffamily References}

 \bibitem{bergmann:dlVP} 
 Bergmann R., Prestin J.: 
 \textit{Multivariate periodic wavelets of de la Vall\'{e}e Poussin-type}. 
 J. Fourier Anal. Appl. 21 (2015), 342--369.

 \bibitem{candes:curvelets} 
 Cand\`{e}s E.J., Donoho D.L.:
 \textit{New tight frames of curvelets and optimal representations of objects with
 piecewise $C^2$ singularities}. Commun. Pure Appl. Math. 57 (2004), 219--266.
 
 \bibitem{candes:ridgelets} 
 Cand\`{e}s E.J., Donoho D.L.:
 \textit{Ridgelets: a key to higher–dimensional intermittency?}. Phil. Trans. Royal Soc. London A 357 (1999), 2495--2509.
 
 \bibitem{canny:detection} 
 Canny, F.J.:
 \textit{A computational approach to edge detection}. IEEE Trans. Pattern Anal. Mach. Intell. 8 (1986), 679--698.

 \bibitem{donoho:wedgelets} 
 Donoho D.L.:
 \textit{Wedgelets: nearly minimax estimation of edges}. Ann. Statist. 27 (1999), 859--897.
 
 \bibitem{donoho:sparse} 
 Donoho D.L.:
 \textit{Sparse components of images and optimal atomic decompositions}. Constr. Approx. 17 (2001), 353--382.

 \bibitem{grohs:parabolic_molecules} 
 Grohs P., Kutyniok G.:
 \textit{Parabolic Molecules}. 
 Found. Comput. Math. 14 (2014), 299--337.
 
 \bibitem{grohs:alpha} 
 Grohs P., Keiper S., Kutyniok G., Schäfer M.:
 \textit{$\alpha$-Molecules}. 
 Appl. Comput. Harmon. Anal. 41 (2016), 297--336.
 
 \bibitem{grohs:molecules} 
 Grohs P., Kereta Z.:
 \textit{Analysis of edge and corner points using parabolic dictionaries}. 
 Appl. Comput. Harmon. Anal. 48 (2020), 655--681.

 \bibitem{labate:detection_continuous}
 Guo K., Labate D.:
 \textit{Characterization and analysis of edges using the continuous shearlet transform}.
 SIAM J. Imaging Sci. 2 (2009), 959--986.

 \bibitem{labate:detection} 
 Guo K., Labate D.:
 \textit{Detection of singularities by discrete multiscale directional representations}. 
 J. Geom. Anal. 28 (2018), 2102--2128.

 \bibitem{labate:smooth} 
 Guo K., Labate D.:
 \textit{Characterization and analysis of edges in piecewise smooth functions}. 
 Appl. Comput. Harmon. Anal. 41 (2016) 139-–163.

 \bibitem{kutyniok:book} 
 Kutyniok G., Labate D.:
 \textit{Introduction to Shearlets}. 
 Appl. Comput. Harmon. Anal., Birkh\"auser (2012), 1--38.

 \bibitem{kutyniok:edges_compactly} 
 Kutyniok G., Petersen P.:
 \textit{Classification of edges using compactly supported shearlets}. 
 Appl. Comput. Harmon. Anal. 42 (2017), 245--293.

 \bibitem{labate:sparse} 
 Labate D.:
 \textit{Optimally sparse multidimensional representation using shearlets}. 
 SIAM J. Math. Anal. 39 (2007), 298--318.

 \bibitem{langemann:multi_periodic_wavelets} 
 Langemann D., Prestin J.:
 \textit{Multivariate periodic wavelet analysis}. 
 Appl. Comput. Harmon. Anal. 28 (2010), 46--66.

 \bibitem{maksimenko:multi_periodic_wavelets} 
 Maksimenko I.E., Skopina M.A.:
 \textit{Multidimensional periodic wavelets}. 
 St. Petersbg. Math. J. 15 (2004), 165--190.

 \bibitem{mallat:detection} 
 Mallat S., Hwang W.L.:
 \textit{Singularity detection and processing with wavelets}. 
 IEEE Trans. Inf. Theory 38 (1992), 617--643.
 
 \bibitem{mallat:buch} 
 Mallat S., Hwang W.L.:
 \textit{A wavelet tour of signal processing}. 
 Academic Press (1992).
 
 \bibitem{meyer:brushlets} 
 Meyer F.G., Coifman R.R.:
 \textit{Brushlets: a tool for directional image analysis and image compression}. 
 Appl. Comput. Harmon. Anal. 4 (1997), 147--187.

 \bibitem{porteous:faadibruno} 
 Porteous, I.R.:
 \textit{Geometric differentiation: For the intelligence of curves and surfaces}. 
 Cambridge University Press (2001).

 \bibitem{schober:detection} 
 Schober, K. and Prestin, J. and Stasyuk, S.A.:
 \textit{Edge detection with trigonometric polynomial shearlets}. 
 Adv. Comput. Math. 47 (2021).

 \end{thebibliography}
\end{document}